\documentclass[12pt,reqno]{amsart}
\usepackage{amssymb}
\usepackage{graphicx}
\usepackage{xcolor}

\usepackage[all]{xy}
\DeclareFontFamily{U}{mathb}{\hyphenchar\font45}
\DeclareFontShape{U}{mathb}{m}{n}{
      <5> <6> <7> <8> <9> <10> gen * mathb
      <10.95> mathb10 <12> <14.4> <17.28> <20.74> <24.88> mathb12
      }{}
\DeclareSymbolFont{mathb}{U}{mathb}{m}{n}
\DeclareMathSymbol{\righttoleftarrow}{3}{mathb}{"FD}

\oddsidemargin 15mm
\evensidemargin 15mm
\textwidth 130mm

\theoremstyle{plain}
\newtheorem{prop}{Proposition}[section]
\newtheorem{theo}[prop]{Theorem}
\newtheorem{coro}[prop]{Corollary}
\newtheorem{lemm}[prop]{Lemma}
\theoremstyle{remark}
\newtheorem{rema}[prop]{Remark}

\theoremstyle{definition}
\newtheorem{defi}[prop]{Definition}
\newtheorem{nota}[prop]{Notation}
\newtheorem{conv}[prop]{Convention}
\newtheorem{exam}[prop]{Example}
\newtheorem{prob}[prop]{Problem}
\numberwithin{equation}{section}
\newcommand{\A}{{\mathbb A}}

\newcommand{\PP}{{\mathbb P}}
\newcommand{\bP}{{\mathbb P}}

\newcommand{\N}{{\mathbb N}}

\newcommand{\Z}{{\mathbb Z}}

\newcommand{\cB}{{\mathcal B}}

\newcommand{\cO}{{\mathcal O}}
\newcommand{\cI}{{\mathcal I}}
\newcommand{\cJ}{{\mathcal J}}

\newcommand{\cL}{{\mathcal L}}

\newcommand{\fA}{{\mathfrak A}}

\newcommand{\rH}{{\mathrm H}}

\newcommand{\GL}{{\mathrm{GL}}}
\newcommand{\PGL}{{\mathrm{PGL}}}

\newcommand{\bF}{{\mathbb F}}

\newcommand{\bN}{{\mathbb N}}

\newcommand{\bZ}{{\mathbb Z}}

\newcommand{\fS}{{\mathfrak S}}
\newcommand{\fK}{{\mathfrak K}}

\newcommand{\eqto}{\stackrel{\lower1.5pt\hbox{$\scriptstyle\sim\,$}}\to}
\newcommand{\eqdashto}{\stackrel{\lower1.5pt\hbox{$\scriptstyle\sim\,$}}\dashrightarrow}
\newcommand{\actsfromleft}{\mathrel{\reflectbox{$\righttoleftarrow$}}}
\newcommand{\actsfromright}{\righttoleftarrow}
\DeclareMathOperator{\Alg}{Alg}

\DeclareMathOperator{\Pic}{Pic}

\DeclareMathOperator{\Hom}{Hom}

\DeclareMathOperator{\Aut}{Aut}
\DeclareMathOperator{\Ker}{Ker}

\DeclareMathOperator{\Burn}{Burn}
\DeclareMathOperator{\Bir}{Bir}

\DeclareMathOperator{\Ind}{Ind}

\begin{document}
\title[Equivariant Burnside groups]{Equivariant Burnside groups and representation theory}

\author{Andrew Kresch}
\address{
  Institut f\"ur Mathematik,
  Universit\"at Z\"urich,
  Winterthurerstrasse 190,
  CH-8057 Z\"urich, Switzerland
}
\email{andrew.kresch@math.uzh.ch}
\author{Yuri Tschinkel}
\address{
  Courant Institute,
  251 Mercer Street,
  New York, NY 10012, USA
}

\email{tschinkel@cims.nyu.edu}

\address{Simons Foundation\\
160 Fifth Avenue\\
New York, NY 10010\\
USA}

\date{August 1, 2021}

\begin{abstract}
We apply the equivariant Burnside group formalism to distinguish linear actions of finite groups, up to equivariant birationality. Our approach is based on De Concini-Procesi models of subspace arrangements.
\end{abstract}

\maketitle

\section{Introduction}
\label{sec.intro}
Let $G$ be a finite group, and $k$ a field of characteristic zero containing all 
roots of unity of order dividing $|G|$. 
In this paper, we continue the study of the {\em equivariant Burnside group}
$$
\Burn_n(G),
$$
introduced in \cite{BnG},  building on \cite{KT}, \cite{kontsevichpestuntschinkel}, \cite{Bbar}, and 
\cite{HKTsmall}. This group receives $G$-equivariant birational equivalence classes 
$$
[X\actsfromright G]
$$
of smooth projective varieties $X$ with generically free $G$-action.

Prior to the introduction of this invariant, the main tools to distinguish $G$-actions, up to equivariant birationality, were
\begin{itemize}
\item existence of fixed points upon restriction to abelian subgroups, \item group cohomology $\rH^1(G', \Pic(X))$, for subgroups $G'\subseteq G$, 
\item the equivariant Minimal Model Program (MMP) 
and equivariant birational rigidity.
\end{itemize}
The new invariant allows to 
distinguish actions for which the classical approaches fail. 
In \cite{HKTsmall}, we presented several geometric applications, distinguishing nonlinear actions from linear actions, e.g., we produced a rational cubic fourfold with $G=\bZ/6$-action, not $G$-birational to a linear action on $\bP^4$. 

Motivated by these developments, in \cite{KT-struct} we considered various functorial properties of $\Burn_n(G)$, 
such as restriction, fibrations, and products. 
In this paper, we develop techniques that allow to distinguish linear $G$-actions on projective space $\PP^n$, modulo conjugation in the Cremona group, i.e., modulo $G$-equivariant birationality.
The corresponding classification is an open problem already for $n=2$ (see \cite{DolIsk}). 

Linear $G$-actions on $\bP^n$ arise in different ways.
We may consider:
\begin{itemize}
\item 
the natural compactification a linear action of $G$ on $\A^n$,
\item the action induced by a linear action of $G$ on $\A^{n+1}$,
\item a
projective linear representation of $G$.
\end{itemize}
Standard examples are actions on $\bP^1$ of a cyclic group, of the symmetric group $\fS_3$, and of the Klein $4$-group $\fK_4=\bZ/2\oplus \bZ/2$, respectively. 

One of our main results is an explicit combinatorial algorithm to compute the class
\begin{equation}
\label{eqn:class}
[\bP^n\actsfromright G]\in \Burn_n(G)
\end{equation}
for each of the cases above (Theorem~\ref{theo:more-formula}). 
Our principal tool is the formalism of De Concini-Procesi 
compactifications of subspace arrangements, adopted to the equivariant context.
The De Concini-Procesi model is well-suited 
for recursive computations, allowing us to extract the class \eqref{eqn:class} from classes of strata together with the corresponding normal bundle data. 

We then apply this formalism in 
representative examples. In particular, we are able to address the long-standing problem of birational classification of $G$-actions on $\bP^2$. 
We also present new classes of examples of nonbirational linear actions on $\bP^3$.

In Section~\ref{sect:gen} we give a new definition of $\Burn_n(G)$, rewriting generators and relations in terms of centralizers rather than normalizers as in \cite{BnG}; this equivalent presentation is better suited for applications.  
In Section~\ref{sect:proper} we recall some properties of $\Burn_n(G)$ and introduce a new operation, induction, that plays a dual role to the restriction operation of \cite{KT-struct}. We discuss simplifications of the equivariant Burnside group based on the combinatorics of subgroups and properties of $G$-actions in smaller dimensions. 
In Section~\ref{sect:strati} we consider 
smooth projective $X$ with a simple normal crossing divisor $D=\bigcup_{i\in \cI}D_i$ and explain how to compute the class $[X\actsfromright G]$ in terms of 
classes of strata $D_I:=\bigcap_{i\in I} D_i$ for $I\subseteq \cI$ and the corresponding normal bundle data.
In Section~\ref{sect:sumLi} we provide the framework for this computation, when the normal bundles are replaced by an arbitrary sequence of line bundles.
In Section~\ref{sect:models} we study De Concini-Procesi models for linear representations; these depend on the lattice of stabilizer subgroups of $G$.
In Section~\ref{sect:projective} we develop the analogous theory for projective linear actions and consider the corresponding De Concini-Procesi models.
In Section~\ref{sect:comp-class} we compute the classes of linear and projective linear actions in the corresponding Burnside groups.
In Section~\ref{sec.perm} we specialize to permutation actions, illustrating the 
general theory in examples. In Sections~\ref{sect:proj2} and \ref{sect:p3} we
consider actions in dimension 2 and 3.

\medskip
\noindent
\textbf{Acknowledgments:}
The first author was partially supported by the
Swiss National Science Foundation. 
The second author was partially supported by NSF grant 2000099.

\section{Generalities}
\label{sect:gen}

In this paper, we work with smooth quasi-projective $G$-varieties $X$ over $k$.
By this, we mean smooth quasi-projective schemes $X$ with an action of $G$ that is transitive on the set of irreducible components.
The last requirement is made, in order to be able to speak of the generic behavior of a $G$-variety.
For instance, $G$ acts \emph{generically freely} on $X$ if $G$ acts freely on some nonempty, hence dense, invariant open subvariety.

In \cite{BnG} and  \cite{KT-struct} we introduced and studied
the {\em equivariant Burnside group}
$$
\Burn_n(G)=\Burn_{n,k}(G).
$$
The group-theoretic framework of the formalism in \cite{BnG} was based on (conjugacy classes of) {\em abelian} subgroups $H\subseteq G$ and their {\em normalizers} $N_G(H)$; here we present an equivalent version based on {\em centralizers} $Z_G(H)$ instead. The advantage of the centralizer language is that the equivariant Burnside group itself is easier to describe; this comes at the cost of additional complexity in the expression for the class $[X\actsfromright G]$ of a smooth projective $G$-variety $X$ with generically free action (Definition \ref{defn.classXG}, below).

\

The basic ingredients of the construction are: 

\begin{itemize}
\item abelian subgroups $H\subseteq G$, the quotients $Z:=Z_G(H)/H$, and the character groups 
$$
H^\vee:=\Hom(H,k^\times),
$$
\item the sets $\Bir_d(k)$ of isomorphism classes of function fields of algebraic varieties of dimension $d$ over $k$; we identify a field with its isomorphism class, 
\item the sets $\Alg_Z(K_0)$ of isomorphism classes of $Z$-Galois algebras $K/K_0$, with $K_0\in \Bir_d(k)$,
satisfying \\
\

{\bf Assumption 1}:  The homomorphism
\begin{equation}
\label{eqn.assumption1}
\rH^1(Z_G(H),K^\times)\to \rH^1(H,K^\times)^Z\cong H^\vee
\end{equation}
is surjective.
\end{itemize}

The {\em equivariant Burnside group}
$$
\Burn_n(G)=\Burn_{n,k}(G)
$$
is the $\bZ$-module, generated by symbols
$$
\mathfrak s:=(H, Z \actsfromleft K, \beta),
$$
where $H$ and $Z$ are as above and:
\begin{itemize}
\item $K\in \Alg_Z(K_0)$, with $K_0\in \Bir_d(k)$, and $d\le n$,  
\item $\beta=(b_1,\dots,b_{n-d})$, a sequence of nonzero elements generating $H^\vee$.
\end{itemize}
We call $d$ the dimension of $\mathfrak s$. 
We permit ourselves to write a symbol in the form
\[
(H,Y\actsfromleft K,\beta)
\]
with a subgroup $Y\subset Z$ and $K\in \Alg_Y(K_0)$, the set of isomorphism classes of
$Y$-Galois algebras over $K_0$ satisfying
the analogous condition to Assumption 1:
\begin{equation}
\label{eqn.Ind}
(H,Y\actsfromleft K,\beta):=(H,Z\actsfromleft \Ind_Y^Z(K),\beta).
\end{equation}

These generators are subject to relations:

\medskip
\noindent
\textbf{(O):}
$(H, Z \actsfromleft K, \beta)=(H, Z \actsfromleft K, \beta')$ if
$\beta'$ is a reordering of $\beta$.

\medskip
\noindent
\textbf{(C):} 
$(H,Z \actsfromleft K,\beta) = (H',Z' \actsfromleft K,\beta')$,
when $H'=gHg^{-1}$ and $Z'=Z_G(H')/H'$,  with $g\in G$, and $\beta$ and $\beta'$ are related by conjugation by $g$.

\medskip
\noindent
\textbf{(B1):}
$(H,Z \actsfromleft K, \beta)=0$ when $b_1+b_2=0$.

\medskip
\noindent
\textbf{(B2):}
$(H,Z \actsfromleft K, \beta)=\Theta_1+\Theta_2$,
where
\[
\Theta_1= \begin{cases} 0, & \text{if $b_1=b_2$}, \\ 
(H,Z \actsfromleft K, \beta_1)+(H,Z \actsfromleft K, \beta_2),
& \text{if $b_1\neq b_2$},
\end{cases}
\]
with
\[
\beta_1:=(b_1,b_2-b_1,b_3,\ldots, b_{n-d}), \quad \beta_2:=(b_2,b_1-b_2,b_3,\ldots, b_{n-d}),
\]
and
\[
\Theta_2= \begin{cases} 0, & \text{if $b_i\in \langle b\rangle$ for some $i$, $b:=b_1-b_2$}, \\
(\overline{H},\overline{Z} \actsfromleft \overline{K}, \bar{\beta}), & \text{otherwise}, 
\end{cases}
\]
with
\begin{gather*}
\overline{H}:=\Ker(b),
\quad 
\overline{Z}:=Z_G(H)/\overline{H}, 
\quad 
\overline{K}:=K(t), \\
\bar{\beta}:=(\bar b_2,\bar b_3,\dots,\bar b_{n-d}), \quad
\bar b_i\in \overline{H}^\vee,
\end{gather*}
and $\overline{K}$ carries the action described in Construction \textbf{(A)}
in \cite[Section 2]{BnG}, applied to the character $b=b_1-b_2\in H^\vee$:
the $Z_G(H)$-action on $K(t)$ arises by lifting $b$
via \eqref{eqn.assumption1} and is trivial on $\overline{H}$.

\begin{rema}
\label{rem.whyequivalent}
We see that the above description reproduces the equivariant Burnside group defined in \cite{BnG} by appealing to \cite[Lemma 2.1]{KT-struct}, which is the observation that a symbol involving an algebra in $\Alg_N(K_0)$, $N:=N_G(H)/H$, is equivalent, in a sense analogous to \eqref{eqn.Ind}, to one with an algebra in $\Alg_Z(K_0)$.
\end{rema}

\medskip

{\bf Assumption 2}:  All stabilizers 
for the $G$-action on $X$ are abelian,
and for every abelian subgroup $H$ and
component of $X^H$ with generic stabilizer $H$, the
$Z_G(H)$-orbit $F$ has the property that
the composite homomorphism
\[ \Pic^G(X)\to \rH^1(Z_G(H),k(F)^\times)\to H^\vee \]
is surjective.
Here, the first map is given by restriction, and the second is the
map from Assumption 1 with $K=k(F)$.

Note that Assumption 2 implies, for every
$H$ and $F$, that $Z_G(H)$ is the maximal subgroup of $G$
for which $F$ is invariant, and
$Z\actsfromleft k(F)$ satisfies
Assumption 1
(see \cite[Remark 3.2(i)]{BnG} and \cite[Lemma 2.1]{KT-struct}).

\begin{defi}
\label{defi:gamma}
Let $H$ be an abelian subgroup of $G$, and $\Gamma$ an abelian group. A $\Gamma$-pairing on $H$ is a bilinear map 
$$
\Gamma\times H\to k^\times,
$$ 
i.e., a homomorphism 
$$
\Gamma\to H^\vee.
$$
\end{defi}

The group $G$ acts naturally by conjugation on abelian subgroups with $\Gamma$-pairing. 

Reformulating \cite[Definition~4.4]{BnG}, we express
the class 
$$
[X\actsfromright G]\in \Burn_n(G)
$$ 
in the language of centralizers, as follows:

\begin{defi}
\label{defn.classXG}
Let $X$ be a smooth projective $G$-variety with a generically free $G$-action satisfying Assumption 2. 
Put
\[ [X\actsfromright G]:=\!\!\sum_{\text{$H$,  $\Pic^G(X)$-pairing}}\!\!
\!\!\sum_{\substack{\text{$F\subset X$ with generic}\\ \text{stabilizer $H$ and $\Pic^G(X)$-pairing}}}\!\!
(H,Z\actsfromleft k(F),\beta_F(X)), \]
where
\begin{itemize}
\item 
the outer sum is over conjugacy class representatives of abelian subgroups with $\Pic^G(X)$-pairing;
\item 
for each conjugacy class representative 
\begin{equation}
\label{eqn.PicGpairing}
\Pic^G(X)\to H^\vee
\end{equation}
we consider
the components of the fixed locus $X^H$ that have generic stabilizer $H$ and, by the map in Assumption 2, $\Pic^G(X)$-pairing \eqref{eqn.PicGpairing};
\item 
the inner sum is over $Z_G(H)$-orbits of components $F\subset X$;
\item 
the generic normal bundle representation along $F$ is recorded as $\beta_F(X)$.
\end{itemize}
\end{defi}

We recall the key fact \cite[Theorem 5.1]{BnG}:

\begin{theo}
\label{them:inv}
Let $X$ be a smooth projective variety with a generically free $G$-action. 
The class 
$$
[X\actsfromright G]\in \Burn_n(G)
$$
is a $G$-equivariant birational invariant. 
\end{theo}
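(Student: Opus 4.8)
The plan is to reduce Theorem~\ref{them:inv} to the corresponding statement in \cite{BnG}, namely \cite[Theorem 5.1]{BnG}, via the dictionary already set up in Remark~\ref{rem.whyequivalent}. Concretely, I would first establish that the map sending a normalizer-based symbol $(H, N_G(H)/H \actsfromleft K, \beta)$ with $K\in\Alg_N(K_0)$ to the centralizer-based symbol $(H, Z_G(H)/H \actsfromleft K, \beta)$ — where the latter is interpreted via the induction convention \eqref{eqn.Ind}, i.e.\ using $\Ind$ along $Z_G(H)/H \hookrightarrow N_G(H)/H$ read backwards through \cite[Lemma 2.1]{KT-struct} — descends to a well-defined isomorphism of $\bZ$-modules between $\Burn_n(G)$ as defined in \cite{BnG} and $\Burn_n(G)$ as defined in this section. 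This requires checking that the relations \textbf{(O)}, \textbf{(C)}, \textbf{(B1)}, \textbf{(B2)} here match, term by term, the relations of \cite{BnG} under this dictionary; the content of \textbf{(B2)} in particular involves Construction \textbf{(A)} of \cite[Section 2]{BnG}, which is cited verbatim, so the blowup-type relation is literally the same once the algebra data is translated.

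Next I would verify that the two presentations of the \emph{class} agree: that Definition~\ref{defn.classXG} here produces the same element of (the common) $\Burn_n(G)$ as \cite[Definition 4.4]{BnG}. This is essentially bookkeeping. In \cite{BnG} the inner sum runs over $N_G(H)$-orbits of components $F$ of $X^H$ with generic stabilizer $H$, and records the $N_G(H)/H$-Galois algebra structure on $k(F)$; here the inner sum runs over $Z_G(H)$-orbits, recording only the $Z_G(H)/H$-Galois structure. Since an $N_G(H)$-orbit of components breaks up into $Z_G(H)$-orbits permuted transitively by $N_G(H)/Z_G(H)$, and since the $N$-Galois algebra associated to the whole $N$-orbit is exactly $\Ind$ of the $Z$-Galois algebra associated to one $Z$-orbit, the two expressions coincide by \eqref{eqn.Ind} together with \cite[Lemma 2.1]{KT-struct}. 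I would also note that Assumption~2 here is the translation of the hypotheses under which \cite[Definition 4.4]{BnG} is made (as already remarked after Assumption~2, via \cite[Remark 3.2(i)]{BnG}), so the class is defined under the same circumstances.

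With both the group and the class identified with their counterparts in \cite{BnG}, the theorem is immediate: \cite[Theorem 5.1]{BnG} asserts precisely that $[X\actsfromright G]$ is a $G$-equivariant birational invariant, and birational invariance is a statement about the element of the abstract group, insensitive to which presentation one uses. So $[X\actsfromright G]\in\Burn_n(G)$, in the sense of Definition~\ref{defn.classXG}, is a $G$-equivariant birational invariant as claimed.

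The only real obstacle is the faithful comparison of relations \textbf{(B2)} across the two presentations: one must be careful that the passage from centralizer to normalizer (via induction of Galois algebras) commutes with the operations appearing in \textbf{(B2)} — the splitting $\beta\mapsto(\beta_1,\beta_2)$, and especially the formation of $(\overline H, \overline Z\actsfromleft \overline K,\bar\beta)$ with $\overline K = K(t)$ carrying the lifted action. Here I would invoke compatibility of $\Ind$ with the construction $K\mapsto K(t)$ and with restriction along $\overline H\subseteq H$, which follows from the corresponding compatibilities recorded in \cite{KT-struct} and \cite{BnG}; once this is in place, \textbf{(B2)} matches on the nose and there is nothing further to prove. (Strictly, one could instead reprove Theorem~\ref{them:inv} from scratch by running the weak-factorization argument of \cite[Section 5]{BnG} directly in the centralizer language, but this would duplicate that entire argument; the reduction above is both shorter and makes transparent why the new presentation is legitimate.)
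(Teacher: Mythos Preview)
Your proposal is correct and follows the same approach as the paper: the paper simply recalls this result as \cite[Theorem 5.1]{BnG}, relying on Remark~\ref{rem.whyequivalent} and the text surrounding Definition~\ref{defn.classXG} (``Reformulating \cite[Definition~4.4]{BnG}\dots'') for the identification of the two presentations. You have spelled out in more detail the dictionary that the paper leaves implicit, but the strategy is identical.
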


\section{Properties of equivariant Burnside groups}
\label{sect:proper}

In this section, we record several basic constructions concerning equivariant Burnside groups. Let $G$ be a finite group and $G'\subset G$ a subgroup.

\subsection*{Burnside groups of stacks}
In \cite{Bbar} we defined 
$$
\overline{\Burn}_n,
$$
the Burnside group of orbifolds, receiving 
birational equivalence classes of algebraic orbifolds; we write 
$$
[\mathcal X]\in \overline{\Burn}_n
$$
for the class of the quotient stack 
$\mathcal X:=[X/G]$.
In \cite[Section 7]{BnG} we 
defined a homomorphism
\begin{equation}
\label{eqn:kappa}
\kappa^G:\Burn_n(G) \to \overline{\Burn}_n,
\end{equation}
satisfying
$$
\kappa^G([X\actsfromright G]) = [\mathcal X] \in \overline{\Burn}_n, 
$$
see \cite[Proposition 7.9]{BnG}.  

\subsection*{Restriction}
A $G$-action on a quasi-projective variety $X$ induces an action of $G'$ on $X$. In \cite[Theorem 7.2]{KT-struct} we defined a homomorphism
$$
\mathrm{res}_{G'}^G: \Burn_n(G)\to \Burn_n(G'),
$$
via an explicit formula on generators
$$
(H,Z\actsfromleft K,\beta).
$$
With this definition, we have
$$
\mathrm{res}_{G'}^G( [X\actsfromright G])= [X\actsfromright G']\in \Burn_n(G').
$$

\subsection*{Induction}
A $G'$-action on a quasi-projective variety $X$ induces an action of $G$ on $X\times^{G'}G$, the quotient of $X\times G$ by a diagonal action of $G'$.
Then we obtain
\begin{equation}
\label{eqn.induction}
[X\actsfromright G']\mapsto [X\times^{G'}G\actsfromright G]
\end{equation}
from a natural homomorphism of equivariant Burnside groups:

\begin{defi}
\label{def.induction}
The \emph{induction homomorphism}
\[ \mathrm{ind}_{G'}^G\colon \Burn_n(G')\to \Burn_n(G) \]
is given by
\[ (H',Z'\actsfromleft K,\beta)\mapsto (H',Z'\actsfromleft K,\beta). \]
Here, $Z'$ denotes $Z_{G'}(H')/H'$, and on the right we follow the notational convention \eqref{eqn.Ind}, for the subgroup $Z$
of $Z_G(H')/H'$:
\[ (H',Z'\actsfromleft K,\beta)=(H',Z_G(H')/H'\actsfromleft \Ind_{Z'}^{Z_G(H')/H'}(K),\beta). \]
\end{defi}

\begin{prop}
\label{prop.restrictioninduction}
Let $G'\subseteq G$, and
let $X$ be a quasi-projective variety with generically free $G'$-action.
Then $\mathrm{ind}_{G'}^G$ obeys \eqref{eqn.induction}, and additionally,
\[ \kappa^G\circ \mathrm{ind}_{G'}^G=\kappa^{G'}, \]
where 
$\kappa^G$ is the homomorphism \eqref{eqn:kappa} to $\overline{\Burn}_n$.
\end{prop}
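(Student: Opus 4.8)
The plan is to verify each of the two assertions in turn, reducing both to the corresponding facts already established in \cite{BnG} and \cite{KT-struct}. The first assertion, that $\mathrm{ind}_{G'}^G$ obeys \eqref{eqn.induction}, is geometric: given $X$ with generically free $G'$-action, I must compute the class $[X\times^{G'}G\actsfromright G]$ from Definition~\ref{defn.classXG} and match it term by term with $\mathrm{ind}_{G'}^G([X\actsfromright G'])$. First I would observe that $Y:=X\times^{G'}G$ is a disjoint union of $|G|/|G'|$ copies of $X$, indexed by $G'\backslash G$, permuted transitively by $G$, with the copy corresponding to the trivial coset stabilized (setwise) by $G'$ and acted on there as the original $X$. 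Hence for an abelian subgroup $H\subseteq G$, the components of $Y^H$ with generic stabilizer $H$ that meet a given copy $gX$ are exactly the $g$-translates of the components of $X^{H'}$ where $H'=g^{-1}Hg\subseteq G'$; and the $Z_G(H)$-orbits of such components on $Y$ biject with the $Z_{G'}(H')$-orbits of components of $X^{H'}$ on $X$. The residual group acting on a component $F$ of $Y$ is $Z_G(H)/H$, while the subgroup actually preserving the copy $gX$ containing $F$ and acting there is $Z_{G'}(H')/H'$ (transported by $g$); the Galois algebra over $k(F)$ is therefore induced from the $Z_{G'}(H')/H'$-algebra $k(F\cap gX)$ up to $Z_G(H)/H$, which is precisely the content of the convention \eqref{eqn.Ind} invoked in Definition~\ref{def.induction}. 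The normal bundle data $\beta$ is unchanged, since locally near $F$ the variety $Y$ is $G$-equivariantly just $gX$. Collecting these identifications, and using relation \textbf{(C)} to pass to conjugacy-class representatives, yields exactly $\sum (H',Z'\actsfromleft k(F\cap gX),\beta)$ summed over the data defining $[X\actsfromright G']$, i.e.\ $\mathrm{ind}_{G'}^G([X\actsfromright G'])$.

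For the second assertion, $\kappa^G\circ\mathrm{ind}_{G'}^G=\kappa^{G'}$, I would first check it on classes of the form $[X\actsfromright G']$, where it is immediate: by Proposition~\ref{prop.restrictioninduction}'s first part, $\kappa^G(\mathrm{ind}_{G'}^G([X\actsfromright G']))=\kappa^G([X\times^{G'}G\actsfromright G])=[[\,(X\times^{G'}G)/G\,]]$, and there is a canonical isomorphism of quotient stacks $[(X\times^{G'}G)/G]\cong[X/G']$, so this equals $[[X/G']]=\kappa^{G'}([X\actsfromright G'])$. To upgrade this from geometric classes to all of $\Burn_n(G')$, I would note that both $\kappa^G\circ\mathrm{ind}_{G'}^G$ and $\kappa^{G'}$ are homomorphisms, so it suffices to see they agree on generators $\fs=(H',Z'\actsfromleft K,\beta)$; and by the definition of $\kappa^{G'}$ in \cite[Section~7]{BnG} the value $\kappa^{G'}(\fs)$ is computed by an explicit formula on symbols (the same one used to define $\kappa^G$), so one simply compares the two explicit recipes. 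Concretely, $\kappa^G(\mathrm{ind}_{G'}^G(\fs))$ is $\kappa^G$ applied to the induced symbol $(H',Z_G(H')/H'\actsfromleft\Ind_{Z'}^{Z_G(H')/H'}(K),\beta)$, and one must check this equals $\kappa^{G'}(\fs)$; since $\kappa$ passes to the orbifold by forgetting the ambient group and remembering only the stabilizer $H'$, its Galois algebra data over the residue field, and the weights $\beta$—all of which are unaffected by the induction of the $Z$-algebra—the two agree.

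The main obstacle I expect is bookkeeping in the first part: correctly tracking how the various groups ($Z_G(H)$ versus $Z_{G'}(H')$, and their quotients by $H$, $H'$) and the Galois algebras transform under the passage from $X$ to $X\times^{G'}G$, and confirming that the $\Ind$ appearing in Definition~\ref{def.induction} is exactly the one produced geometrically rather than merely a formally similar construction. A clean way to organize this is to fix a set of coset representatives for $G'\backslash G$ once and for all, write every component of $Y$ uniquely as $gF_0$ with $F_0\subseteq X$ and $g$ a chosen representative, and then check that the defining double sum of Definition~\ref{defn.classXG} for $Y$ reindexes, via \textbf{(C)}, onto the double sum for $X$ with the $Z'$-algebra reinterpreted through \eqref{eqn.Ind}. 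Once that reindexing is spelled out, both claims follow, the second by the stack isomorphism $[(X\times^{G'}G)/G]\cong[X/G']$ together with functoriality of $\kappa$ on symbols.
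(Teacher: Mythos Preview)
Your argument is correct and essentially parallel to the paper's, but the paper is more economical on the second assertion. The paper declares the first assertion ``clear'' without further comment; your detailed unpacking of the fixed loci and Galois algebras on $X\times^{G'}G$ is exactly the content behind that word, and is fine.

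For the second assertion, the paper does \emph{not} attempt a direct symbol-level comparison of $\kappa^G$ and $\kappa^{G'}$. Instead it invokes the fact, recorded as \cite[Rmk.~5.16]{BnG}, that the classes $[X\actsfromright G']$ generate $\Burn_n(G')$. Combined with \cite[Prop.~7.9]{BnG} and the stack isomorphism $[(X\times^{G'}G)/G]\cong[X/G']$ that you already identified, this finishes the proof immediately: two homomorphisms out of $\Burn_n(G')$ that agree on a generating set are equal. Your proposal checks the equality on geometric classes first and then separately on symbol generators; the first of these checks is already sufficient once you cite the generation result, so the second is redundant. Your symbol-level argument (``$\kappa$ forgets the ambient group and only remembers $H'$, the residue field, and $\beta$'') would also work, but it requires unwinding the explicit definition of $\kappa^G$ from \cite[\S7]{BnG} and verifying its insensitivity to induction of the $Z$-algebra, which is more effort than simply quoting that geometric classes generate.
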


\begin{proof}
The first assertion is clear.
For the second, we combine
\cite[Prop.\ 7.9]{BnG} with the observation, stated as
\cite[Rmk.\ 5.16]{BnG}, that the classes $[X\actsfromleft G']$ generate $\Burn_n(G')$, and the fact that the quotient stacks, associated with the left- and right-hand sides of \eqref{eqn.induction}, are isomorphic.
\end{proof}

\subsection*{Products}
There is a product map
\[ \Burn_{n'}(G')\times \Burn_{n''}(G'')\to \Burn_{n'+n''}(G'\times G''),  \]
which satisfies
$$
([X'\actsfromright G'], [X''\actsfromright G'']) \mapsto
[X'\times X''\actsfromright G'\times G''],
$$
see \cite[Section 6]{KT-struct}. 

\subsection*{Filtrations}
The equivariant Burnside group admits various projections
\begin{equation}
\label{eqn:projj}
\Burn_n(G)\to \Burn_n^{\mathbf{H}}(G).
\end{equation}
These are based on the combinatorics of $G$ and its subgroups.
Let
$$
\mathbf H:=\{ (H, Y)\},
$$
be a set of pairs, with $H\subseteq G$ abelian and $Y\subseteq Z_G(H)/H$, such that 
$$
(gHg^{-1},gYg^{-1}) \in \mathbf{H}, \quad \text{ for all } (H,Y)\in \mathbf H,  g\in G.
$$
The projection \eqref{eqn:projj} is obtained by annihilating all symbols 
$$
(H,Y\actsfromleft K, \beta), \quad \text{ with } K \text{ a field and } (H,Y)\notin \mathbf H.
$$ 
Upon such projections one can reduce  the set of generators and relations, provided $\mathbf H$ is such  
that for all  $(H,Y) \in \mathbf H$ and 
all $g\in Z_G(H)$ with $\bar{g}\in Y$ 
we have
$$
(\langle H, g\rangle, Y/\langle\bar{g}\rangle) \in \mathbf H.
$$
Then $\Burn_n^{\mathbf{H}}(G)$ is generated by symbols 
$$
(H,Y\actsfromleft K, \beta), \quad 
\text{ with } (H,Y)\in \mathbf H, 
$$
(with $K$ a field), and relations applied {\em only} to these triples \cite[Section 3]{KT-struct}.
Here are some basic examples of such $\mathbf{H}$:
\begin{itemize}
\item With $\mathbf{H}_{\mathrm{triv}}:=\{(\mathrm{triv},Y)\,|\,Y\subseteq G\}$, the projection to $\Burn^{\mathbf{H}_{\mathrm{triv}}}_n(G)$ extracts just the $n$-dimensional symbols.
\item When $G$ is abelian, we can take $\mathbf{H}_{\mathrm{max}}:=\{ (G,\mathrm{triv})\}$, and 
$$
\Burn_n^{\mathbf{H}_{\mathrm{max}}}(G) =\Burn^G_n(G),
$$
the group introduced in \cite[Section 8]{BnG}.
For general $G$
we take $\mathbf{H}_{\mathrm{max}}$ to consist of all $(H,\mathrm{triv})$ with $H$ a maximal abelian subgroup of $G$.
\end{itemize}

\subsection*{Incompressibles}
Other simplifications arise when we focus on geometric properties of the the function fields of strata, i.e., the middle terms in the symbols. 
The examination of defining relations of $\Burn_n(G)$ reveals that it contains a distinguished subgroup
$$
\Burn_n^{\mathrm{inc}}(G)\subset \Burn_n(G),
$$
generated by 
{\em incompressible} divisor symbols, where by a divisor symbol we mean a symbol of dimension $d=n-1$, i.e., 
$$
\mathfrak{s}=
(H,Z\actsfromleft K,\beta), \quad K\in \Alg_Z(K_0), \quad K_0\in \Bir_{n-1}(k), 
$$
therefore $H$ a nontrivial cyclic group and $\beta=(b)$,
a single character, generating $H^\vee$.

\begin{defi}
\label{defi:incomp}
A divisor symbol $\mathfrak s$ is called \emph{incompressible} if 
it cannot arise from  
the term 
$\Theta_2$ in relation $\mathbf{(B2)}$.
Otherwise, $\mathfrak{s}$ is called \emph{compressible}. 
\end{defi}

The following is immediate:

\begin{prop}
\label{prop:incomp}
The subgroup
$$
\Burn_n^{\mathrm{inc}}(G)\subseteq \Burn_n(G),
$$
is a direct summand, freely generated by incompressible divisor symbols, modulo Conjugation relation $\mathbf{(C)}$. 
\end{prop}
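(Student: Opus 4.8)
The plan is to exhibit $\Burn_n^{\mathrm{inc}}(G)$ as the image of a splitting projection. First I would observe that the set $\cS_{\mathrm{inc}}$ of incompressible divisor symbols, considered modulo the conjugation relation $\mathbf{(C)}$, maps to $\Burn_n(G)$; call the subgroup it generates $\Burn_n^{\mathrm{inc}}(G)$. The first task is to see that this subgroup is \emph{freely} generated by these symbols, equivalently that there is a well-defined homomorphism $\pi\colon \Burn_n(G)\to A$, where $A$ is the free $\bZ$-module on $\cS_{\mathrm{inc}}/\mathbf{(C)}$, sending an incompressible divisor symbol to its own basis element and every other generator to $0$. To build $\pi$ one must check that all the defining relations $\mathbf{(O)}$, $\mathbf{(C)}$, $\mathbf{(B1)}$, $\mathbf{(B2)}$ are respected.

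Relations $\mathbf{(O)}$ and $\mathbf{(C)}$ are immediate: for a divisor symbol $\beta=(b)$ has length one, so $\mathbf{(O)}$ is vacuous, and $\mathbf{(C)}$ is built into the definition of $A$. Relation $\mathbf{(B1)}$ involves $b_1+b_2=0$, hence requires $n-d\ge 2$, i.e.\ $d\le n-2$; such symbols are not divisor symbols and map to $0$ on both sides, so $\mathbf{(B1)}$ is respected. The only substantive point is $\mathbf{(B2)}$. Again $\mathbf{(B2)}$ requires $n-d\ge 2$, so the left-hand symbol has dimension $d\le n-2$ and is sent to $0$. On the right, $\Theta_1$ involves symbols $\beta_1,\beta_2$ of length $n-d\ge 2$, again of dimension $d\le n-2$, hence sent to $0$. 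The term $\Theta_2$, when nonzero, is $(\overline H,\overline Z\actsfromleft\overline K,\bar\beta)$ with $\bar\beta$ of length $n-d-1$ and $\overline K=K(t)$ of transcendence degree $d+1$; so $\Theta_2$ is a divisor symbol precisely when $d+1=n-1$, i.e.\ $d=n-2$. But by Definition~\ref{defi:incomp} such a symbol is by construction \emph{compressible}, so $\pi$ sends it to $0$ as well. Hence both sides of $\mathbf{(B2)}$ map to $0$, and $\pi$ is well-defined. Since $\pi$ restricts to the identity on $\Burn_n^{\mathrm{inc}}(G)$ (each incompressible divisor symbol goes to its own basis element), we conclude that $\Burn_n^{\mathrm{inc}}(G)$ is free on $\cS_{\mathrm{inc}}/\mathbf{(C)}$ and that $\pi$ is a retraction onto it, whence $\Burn_n^{\mathrm{inc}}(G)$ is a direct summand, with complement $\Ker(\pi)$.

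The step I expect to require the most care is the bookkeeping in $\mathbf{(B2)}$ verifying that no incompressible divisor symbol appears on the right-hand side except through the excluded $\Theta_2$ term: one must track dimensions of all symbols produced and confirm that the $\Theta_1$ contributions really are of dimension $d\le n-2$, never $n-1$, which is where the length-of-$\beta$ constraint $n-d\ge 2$ for $\mathbf{(B2)}$ to apply does the work. Everything else is formal. I would present the argument by first defining $A$ and the candidate map on generators, then checking relation-by-relation as above, and finally noting $\pi|_{\Burn_n^{\mathrm{inc}}(G)}=\mathrm{id}$.
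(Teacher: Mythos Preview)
Your argument is correct and is essentially the same as the paper's: the paper's one-line proof simply observes that incompressible divisor symbols are not involved in any of the relations $\mathbf{(O)}$, $\mathbf{(B1)}$, or $\mathbf{(B2)}$, and you have spelled out exactly this verification by constructing the retraction $\pi$ and checking each relation. The dimension bookkeeping you carry out for $\mathbf{(B1)}$ and $\mathbf{(B2)}$ is precisely what justifies that terse claim.
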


\begin{proof}
Indeed, these symbols are not involved in any  relations \textbf{(O)}, \textbf{(B1)}, or \textbf{(B2)}. 
\end{proof}


\begin{rema}
\label{rema:explain}
When $n=1$, every divisor symbol in incompressible, hence
\[
\Burn_1(G)=\Burn_1^{\mathrm{inc}}(G) \oplus \Burn_1^{\mathbf{H}_{\mathrm{triv}}}(G).
\]
Let $n\ge 2$ and let
$$
\mathfrak s:=(H, Z\actsfromleft K, \beta)
$$
be an incompressible divisor symbol. Geometrically, this symbol stands for the contribution of (an orbit of) an irreducible divisor $F$, which does not arise as the exceptional divisor 
of an equivariant blow-up, i.e., via projectivization of the normal bundle to a codimension $\ge 2$ stratum 
with nontrivial stabilizer.
This is a stronger requirement, than just the failure to be $Z$-equivariantly birational to $F'\times \A^1$ for some $F'$ of codimension $2$, where the action of $Z$ on the second factor $\A^1$ is trivial.
\end{rema}

A version of incompressibility appeared in the definition of 
$\cB_n(G,k)$ in \cite[Remark 5]{kontsevichpestuntschinkel}, see also \cite[Section 5.1]{HKTsmall}, in the context of abelian groups and maximal stabilizers. 
In practice, we will be able to check this condition in small dimensions, where we have explicit control over possible groups and over equivariant birationality.

\begin{prop}
\label{prop:inc}
Let $n=2$, let $G$ be a finite group, and suppose that the base field $k$ is algebraically closed.
Then a divisor symbol $(H,Y\actsfromleft K,\beta)$, $\beta=(b)$, $K$ a field,
is compressible if and only if $Y$ is cyclic and $K$ is a rational function field.
\end{prop}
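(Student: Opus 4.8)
The plan is to analyze directly what it means for a divisor symbol $\mathfrak s = (H, Y \actsfromleft K, \beta)$ with $\beta = (b)$ to arise from the term $\Theta_2$ in relation $\mathbf{(B2)}$, in the special case $n = 2$, $k$ algebraically closed. Since $\mathfrak s$ is a divisor symbol, $d = n - 1 = 1$, so $H$ is a nontrivial cyclic group, $K \in \Bir_1(k)$ is the function field of a curve over $k$, and $Y \subseteq Z_G(H)/H$. I will treat the two directions separately.

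For the ``if'' direction, suppose $Y$ is cyclic and $K$ is a rational function field, i.e., $K \cong k(t)$ (as a $Y$-algebra we should be careful: via \eqref{eqn.Ind} the symbol $(H, Y \actsfromleft K, \beta)$ means $(H, Z \actsfromleft \Ind_Y^Z K, \beta)$ for the relevant centralizer quotient $Z$). The task is to exhibit a $2$-dimensional symbol $(\widetilde H, \widetilde Z \actsfromleft \widetilde K, (b_1, b_2))$ whose $\mathbf{(B2)}$-term $\Theta_2$ is $\mathfrak s$. Following the recipe for $\Theta_2$: one needs $\widetilde H$ an abelian subgroup with $\overline{\widetilde H} := \Ker(b_1 - b_2) = H$ and with $(b_1, b_2)$ generating $\widetilde H^\vee$, and the induced $\overline{\widetilde K} = \widetilde K(t)$ matching $K$. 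Concretely I would take $\widetilde H$ to be an extension of $H$ by a cyclic group generated by a preimage of a generator of $Y$ (using that $Y$ is cyclic to keep $\widetilde H$ abelian of rank $\le 2$ with the right character structure), set $\widetilde K = K_0$ the function field of a point or the relevant curve so that $\widetilde K(t) \cong K$ — this is exactly where rationality of $K$ is used — and choose $b_1, b_2 \in \widetilde H^\vee$ so that $b_1 - b_2$ cuts out $H$ and so that neither $b_i$ lies in $\langle b_1 - b_2\rangle$. Verifying that this gives a genuine symbol satisfying Assumption 1 and that its $\Theta_2$ reproduces $\mathfrak s$ is a bookkeeping computation with the definitions in Section~\ref{sect:gen}.

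For the ``only if'' direction — which I expect to be the substantive part — suppose $\mathfrak s$ is compressible, so it equals the $\Theta_2$ of some $2$-dimensional symbol. Unwinding the construction of $\Theta_2$: there is an abelian group $\widetilde H$ with a surjection onto $H$ with cyclic kernel (namely $\widetilde H / \Ker(b_1-b_2)\cdot\text{(something)}$ — more precisely $\overline{\widetilde H} = H$ where $\overline{\widetilde H} = \Ker(b_1 - b_2)$), and $\overline{K} = \widetilde K(t) = K$, forcing $K$ to be a rational function field over the field $\widetilde K \in \Bir_0(k) = \{k\}$, hence $K \cong k(t)$. This handles the constraint on $K$. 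For the constraint on $Y$: the group $\overline{Z} = Z_G(\widetilde H)/\overline{\widetilde H}$, through which $\overline{K}$ acquires its action, is the image relevant to $\mathfrak s$; I would argue that since $\widetilde H$ surjects onto $H$ with the surjection $b_1 - b_2$ having cyclic (in fact here the quotient $\widetilde H/H$ embeds in $H^\vee$ via $b_1 - b_2$, so it is cyclic), the structure forces the acting group $Y$ attached to $\mathfrak s$ to be a subquotient that is cyclic. The cleanest route is probably to invoke the geometric interpretation in Remark~\ref{rema:explain}: a compressible divisor symbol in dimension $2$ is the contribution of an exceptional divisor of an equivariant blow-up of a point (codimension-$2$ stratum), i.e., of $\PP^1 = \PP(N)$ for the normal bundle $N$ at a fixed point of an abelian stabilizer, with $K = k(\PP^1) = k(t)$ rational; and the residual group $Y$ acting on such a $\PP^1$ with a pairing realizing $H^\vee$ must act through a cyclic quotient because $\Aut(\PP^1, \text{two fixed points})$ restricted to the relevant torus and compatible with the character is cyclic. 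The main obstacle will be making this last step rigorous from the relations alone rather than geometrically: one must show that the combinatorial data forced by ``$\mathfrak s$ appears in $\Theta_2$'' cannot produce a non-cyclic $Y$, which amounts to a careful analysis of how $\overline{Z}$ and the lifted character interact in Construction $\mathbf{(A)}$ of \cite{BnG}, using that all stabilizers in sight are abelian and $k$ is algebraically closed (so there are no arithmetic obstructions and $\Bir_0(k)$ is a single point).
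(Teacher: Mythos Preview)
Your approach is correct and matches the paper's, but you are overcomplicating the ``only if'' direction. The paper dispatches it in one line: since $k$ is algebraically closed, any dimension-$0$ symbol reduces via \eqref{eqn.Ind} to $(\widetilde H,\mathrm{triv}\actsfromleft k,(b_1,b_2))$, and then Construction~\textbf{(A)} applied to this yields $\overline K=k(t)$ with the action of $\overline Z=Z_G(\widetilde H)/H$ coming entirely from the cyclic quotient $\widetilde H/H=\widetilde H/\ker(b_1-b_2)$; that quotient \emph{is} the $Y$ in the resulting divisor symbol. So your observation that $\widetilde H/H$ is cyclic (it embeds in $k^\times$, not in $H^\vee$ as you wrote) already finishes the argument---no geometric detour through blow-ups or $\Aut(\bP^1)$ is needed. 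For the ``if'' direction the paper does exactly what you propose: write $Y=H_1/H$ with $H_1\subset Z_G(H)$ (hence $H_1$ abelian), use algebraic closedness to diagonalize the $Y$-action on $K\cong k(t)$, and take the dimension-$0$ symbol $(H_1,\mathrm{triv}\actsfromleft k,(a,a'))$ with $a$ a lift of $b$ to $H_1^\vee$ and $a'-a$ a generator of $(H_1/H)^\vee$.
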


\begin{proof}
Since $k$ is algebraically closed, a symbol of dimension $0$ must arise as in \eqref{eqn.Ind} from the trivial group acting on $k$, and Construction \textbf{(A)} yields term $\Theta_2$, when nontrivial, with a cyclic group acting on $k(t)$.
Conversely, suppose we have a divisor symbol with $Y$ cyclic, and let us write $Y$ as $H_1/H$.
The condition $H_1\subset Z_G(H)$
(cf.\ Remark \ref{rem.whyequivalent}) implies that $H_1$ is abelian.
Since $k$ is algebraically closed, there must be an isomorphism $K\cong k(t)$ such that $Y$ acts is by roots of unity on $t$.
Compressibility is evident, via a symbol $(H_1,\mathrm{triv}\actsfromleft k,(a,a'))$ with $a$ a lift of $b$ to $H_1^\vee$ and $a'-a$ a generator of $(H_1/H)^\vee$.
\end{proof}




\section{Stratifications}
\label{sect:strati}
In this section, $X$ is a smooth projective variety with generically free $G$-action, and 
$$
D=\bigcup_{i\in \cI}D_i\subset X, \quad 
\cI=\{1,\dots,\ell\},
$$
is a simple normal crossing divisor, where each $D_i$ is invariant under $G$. We assume that
$X$ satisfies Assumption 2.

Our goal in this section is to compute the class 
$[X\actsfromright G]\in \Burn_n(G)$ in terms of $D_i$, their intersections, and the corresponding normal bundle data. 

We start by rephrasing the definition of the equivariant indexed Burnside group 
$$
\Burn_{n,I}(G),
$$
in the setting of centralizers, rather than normalizers as in \cite[Section 4]{KT-struct}. 
Here, and throughout, 
\[ I\subset \N \]
is a finite index set.
The indexed equivariant Burnside group is generated by symbols
$$
(H\subseteq H', Z'\actsfromleft K, \beta, \gamma)
$$
where 
\begin{itemize}
\item $H\subseteq H'$ are abelian subgroups of $G$, 
\item $Z':=Z_G(H')/H'$,
\item $K \in \Alg_{Z'}(K_0)$, $K_0\in \Bir_d(k)$, $d\le n-|I|$, 
\item $\beta=(b_1,\ldots, b_{n-d-|I|})$, a sequence of nonzero characters of $H'$ restricting trivially to $H$ and generating $(H'/H)^\vee$, 
\item $\gamma=(c_i)_{i\in I}$, a sequence of characters of $H'$ whose restrictions generate $H^\vee$.
\end{itemize}

\noindent
The relations are:

\

\noindent
{\bf (O):} reordering of $\beta$, 

\noindent
{\bf (C):} conjugation,

\noindent
{\bf (B1):} vanishing, when $b_1+b_2=0$,

\noindent
{\bf (B2):} blowup relation, similar to that in Section~\ref{sect:gen}. 

\

\noindent
See \cite[Section 4]{KT-struct} for precise definitions.

\begin{exam}
\label{exa.autoZI}
We may view $\gamma$ as a $\Z^I$-pairing of $H'$ (Definition \ref{defi:gamma}),
where
\[ \Z^I:=\bigoplus_{i\in I}\Z. \]
An automorphism $\tau$ of $\Z^I$ determines an automorphism of $\Burn_{n,I}(G)$, by
\[ (H\subseteq H',Z'\actsfromleft K,\beta,\gamma)\mapsto
(H\subseteq H',Z'\actsfromleft K,\beta,\gamma\circ\tau). \]
A special role will be played by the automorphism
\[ \tau_{I,J}\in \Aut(\Z^I), \]
determined by a subset $J\subseteq I$.
With standard basis $(e_i)_{i\in I}$ of $\Z^I$,
\[
\tau_{I,J}(e_j):=
\begin{cases}
{\displaystyle\sum_{\substack{i\in I_{\le j}\\ \min(J_{\ge i})=j}}e_i},&\text{if $j\in J$},
\\
\ \ \ \ \ \ e_j,&\text{if $j\notin J$},
\end{cases}
\]
for $j\in I$, with self-explanatory notation $I_{\le j}$ and $J_{\ge i}$.
\end{exam}

Also introduced in \cite{KT-struct} are homomorphisms
\[ \omega_{I,J}\colon \Burn_{n,I}(G)\to \Burn_{n,J}(G), \]
for $J\subseteq I$, determined by
application of Construction \textbf{(A)} to the
characters $c_i$ for $i\in I\setminus J$.
We now introduce homomorphisms, which shift $c_i$, for $i\in I\setminus J$ to $\beta$.
Geometrically, this amounts to forgetting $D_i$ for $i\in I\setminus J$.

\begin{defi}
\label{def.psiIJ}
For $J\subseteq I$ the homomorphism
\[ \psi_{I,J}\colon \Burn_{n,I}(G)\to \Burn_{n,J}(G) \]
sends a symbol
$(H\subseteq H',Z'\actsfromleft K,\beta,\gamma)$
with $c_i\ne 0$, for all $i\in I\setminus J$, to
\begin{equation}
\label{eqn.bigsymbol}
\Big(H\cap \bigcap_{i\in I\setminus J}\ker(c_i)\subseteq H',Z'\actsfromleft K,\beta\cup (c_i)_{i\in I\setminus J},(c_j)_{j\in J}\Big);
\end{equation}
in case $c_i=0$ for some $i\in I\setminus J$,
the symbol is mapped to $0$.
\end{defi}

To justify the validity of Definition \ref{def.psiIJ}
we notice that the images of $c_i$ in $H^\vee$,
for $i\in I\setminus J$,
generate
\[ \Big(H\Big/(H\cap \bigcap_{i\in I\setminus J}\ker(c_i))\Big)^\vee. \]
With this observation,
\eqref{eqn.bigsymbol} is an element of
$\Burn_{n,J}(G)$.
The map on symbols respects relations.

These homomorphisms are functorial in index sets, i.e.,
for $K\subseteq J\subseteq I$,
\[ 
\omega_{J,K}\circ \omega_{I,J}=\omega_{I,K}
\qquad\text{and}\qquad
\psi_{J,K}\circ \psi_{I,J}=\psi_{I,K},
\]
and they commute with each other:
\begin{equation}
\label{eqn.omegapsi}
\omega_{J,K}\circ \psi_{I,J}=\psi_{(I\setminus J)\cup K,K}\circ\omega_{I,(I\setminus J)\cup K}.
\end{equation}
Identifying $\Burn_{n,\emptyset}(G)$ with $\Burn_n(G)$, the case $J=\emptyset$ leads to homomorphisms that are denoted with a single index;
$\psi_I$ has already been defined in \cite{KT-struct}:
\begin{align*}
\omega_I\colon \Burn_{n,I}(G)&\to \Burn_n(G),
\\
\psi_I\colon \Burn_{n,I}(G)&\to \Burn_n(G).
\end{align*}

The operations of restriction and induction extend immediately to the setting of equivariant indexed Burnside groups.

\begin{exam}
\label{exa.chi}
With the notation of \cite[\S 4]{KT-struct} we have
\[ \chi_{I,J}(X\actsfromright G,(D_i)_{i\in \cI})=\psi_{I,J}(\chi_I(X\actsfromright G,(D_i)_{i\in \cI})), \]
for $J\subseteq I$, where 
$$
\chi_I(X\actsfromright G,(D_i)_{i\in \cI}):=\chi_{I,I}(X\actsfromright G,(D_i)_{i\in \cI}).
$$
\end{exam}

We generalize the treatment of \cite[\S 4]{KT-struct}
by dropping the assumption, made for notational simplicity,
that the generic stabilizers of components of $D_I$ belong to a single conjugacy class.
First we observe that the generic stabilizer group $H$ of a component of $D_I$ acquires a $\Z^I$-pairing from the divisors $D_i$, $i\in I$.

\begin{nota}
\label{not.chi}
Let $X$ be a smooth quasi-projective variety of dimension $n$, with a generically free $G$-action, satisfying Assumption 2.
Let
\[ D=\bigcup_{i\in \cI}D_i,\qquad
\cI:=\{1,\dots,\ell\}, \]
be a simple normal crossing divisor, with each
$D_i$ invariant under $G$.
For $I\subseteq \cI$ we express the intersection
$D_I$ of $D_i$ for $i\in I$ (with $D_I:=X$ when $I=\emptyset$) as a disjoint union
\[ D_I=\bigsqcup_{j\in \cJ_I} D_{I,j} \]
of unions of components indexed by some set $\cJ_I$, such that each $D_{I,j}$ is $G$-invariant, and for every $j\in \cJ_I$ the generic stabilizers and associated $\Z^I$-pairing of the components of $D_{I,j}$ belong to a single conjugacy class, with representative $H_{I,j}$ and $\Z^I\to H_{I,j}^\vee$.
We also introduce
\begin{equation}
\label{eqn.alsointroduce}
D^\circ_{I,j}:=D^\circ_I\cap D_{I,j},
\end{equation}
where $D^\circ_I$ denotes the complement in $D_I$ of all $D_i$ for $i\notin I$.
\end{nota}

\begin{exam}
\label{exa.chicontinued}
(Continuation of Example \ref{exa.chi}.)
We have
\begin{multline*}\chi_I(X\actsfromright G,(D_i)_{i\in \cI})=\\
\sum_{j\in \cJ_I}\sum_{\substack{H'\supseteq H_{I,j}\\ \text{with $\Pic^G(X)$-pairing}}}\, \, 
\sum_{W\subset D^\circ_{I,j}}
(H_{I,j}\subseteq H',Z'\actsfromleft k(W),\beta_W,\gamma),
\end{multline*}
where
\begin{itemize}
\item the middle sum is over conjugacy class
representatives of abelian subgroups of
$Z_G(H_{I,j})$, containing $H_{I,j}$,
with compatible $\Pic^G(X)$-pairing,
where compatibility is with the $\bZ^I$-pairing of $H_{I,j}$,
\item the rightmost sum is over $Z_G(H')$-orbits of components $W$
with generic stabilizer $H'$ and given $\Pic^G(X)$-pairing, contained in components of
$D^\circ_{I,j}$ with generic stabilizer $H_{I,j}$ and given $\Z^I$-pairing,
\item $\beta_W$ encodes the normal bundle to $W$ in $D^\circ_{I,j}$, and
\item $\gamma$ consists of the characters coming from $D_i$ for $i\in I$.
\end{itemize}
\end{exam}

\begin{lemm}
\label{lem.psi}
In the setting of Notation \ref{not.chi}, for $I\subset\cI$ and $j\in \cI\setminus I$ we have
\begin{align*}
\chi_I(X\actsfromright G,(D_i)_{i\in \cI})+\psi_{I\cup\{j\},I}&(\chi_{I\cup \{j\}}(X\actsfromright G,(D_i)_{i\in \cI}))\\
&\qquad=
\chi_I(X\actsfromright G,(D_i)_{i\in \cI\setminus\{j\}}).
\end{align*}
\end{lemm}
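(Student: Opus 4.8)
The plan is to expand all three terms with the explicit formula of Example~\ref{exa.chicontinued} and to match them contribution by contribution, the comparison being driven by a decomposition of the open stratum. Write $\cI':=\cI\setminus\{j\}$. Since $j\notin I$, the subvariety $D_I$, its $G$-invariant decomposition $\bigsqcup_{j'\in\cJ_I}D_{I,j'}$, and the attached data $H_{I,j'}$, $\Z^I\to H_{I,j'}^\vee$ coincide for the divisors $(D_i)_{i\in\cI}$ and for $(D_i)_{i\in\cI'}$; only the open stratum changes. Writing $D^{\circ,\cI}_{I,j'}$ (resp.\ $D^{\circ,\cI'}_{I,j'}$) for the complement in $D_{I,j'}$ of the $D_i$ with $i\in\cI\setminus I$ (resp.\ $i\in\cI'\setminus I$), I would first record the disjoint decomposition
\[
D^{\circ,\cI'}_{I,j'}=D^{\circ,\cI}_{I,j'}\ \sqcup\ \big(D^{\circ,\cI'}_{I,j'}\cap D_j\big),\qquad D^{\circ,\cI'}_{I,j'}\cap D_j=D_{I,j'}\cap D^{\circ,\cI}_{I\cup\{j\}},
\]
in which the first piece is open and dense, and the second is a smooth divisor, namely the union of the $D^{\circ,\cI}_{I\cup\{j\},j''}$ meeting $D_{I,j'}$ (refining the $G$-invariant decompositions if necessary, one may assume each $D_{I\cup\{j\},j''}$ is contained in a single $D_{I,j'}$). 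Consequently, for every abelian $H'\supseteq H_{I,j'}$, the components of the fixed locus $(D^{\circ,\cI'}_{I,j'})^{H'}$ split, $Z_G(H')$-equivariantly, into those not contained in $D_j$ and those contained in $D_j$. Applying Example~\ref{exa.chicontinued} to $\chi_I(X\actsfromright G,(D_i)_{i\in\cI'})$ and splitting its innermost sum over $W$ accordingly, it then suffices to show that the subsum over $W\not\subseteq D_j$ equals $\chi_I(X\actsfromright G,(D_i)_{i\in\cI})$ and the subsum over $W\subseteq D_j$ equals $\psi_{I\cup\{j\},I}(\chi_{I\cup\{j\}}(X\actsfromright G,(D_i)_{i\in\cI}))$.

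The first of these is the easy half. A component $W$ of $(D^{\circ,\cI'}_{I,j'})^{H'}$ with $W\not\subseteq D_j$ has its generic point in the open subset $D^{\circ,\cI}_{I,j'}$, and intersecting with this open set is a $Z_G(H')$-equivariant bijection onto the components of $(D^{\circ,\cI}_{I,j'})^{H'}$; since $D^{\circ,\cI}_{I,j'}$ is open in $D^{\circ,\cI'}_{I,j'}$, the function field $k(W)$, the generic stabilizer, the normal bundle representation $\beta_W$ and the characters $\gamma=(c_i)_{i\in I}$ are unchanged. Summing over all remaining indexing data reproduces $\chi_I(X\actsfromright G,(D_i)_{i\in\cI})$ term by term.

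The second equality is where the work lies, and I expect it to be the main obstacle. Fix a component $W\subseteq D_j$ occurring in the sum; it is a component of $(D^{\circ,\cI'}_{I,j'})^{H'}$ with generic stabilizer $H'$, lying in $D^{\circ,\cI}_{I\cup\{j\},j''}$ for a unique $j''$, and I denote by $c_j\in(H')^\vee$ the character by which $H'$ acts on the normal line of $D_j$ along $W$. Linearizing the $H'$-action at a generic point $w$ of $W$ (using that $k$ contains enough roots of unity), and using that $D$ is simple normal crossing, hence that $D_j$ meets $D^{\circ,\cI'}_{I,j'}$ transversally: since $W\subseteq D_j$, the $H'$-fixed subspace $T_wW=(T_wD^{\circ,\cI'}_{I,j'})^{H'}$ lies inside $T_w(D_j\cap D^{\circ,\cI'}_{I,j'})$, so $H'$ has no nonzero fixed vector in the normal line of $D_j$, i.e.\ $c_j\neq0$, and therefore $\psi_{I\cup\{j\},I}$ does not annihilate the symbol of $W$. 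The same local analysis shows that moving transversally off $D_j$ inside $D^{\circ,\cI'}_{I,j'}$ drops the generic stabilizer from $H_{I\cup\{j\},j''}$ to $H_{I\cup\{j\},j''}\cap\ker(c_j)$, which therefore equals $H_{I,j'}$, and that one has a splitting $N_{W/D^{\circ,\cI'}_{I,j'}}\cong N_{W/D^{\circ,\cI}_{I\cup\{j\},j''}}\oplus(N_{D_j})|_W$ at $w$, whence $\beta_W^{\cI'}=\beta_W^{\cI\cup\{j\}}\cup(c_j)$ up to reordering (with self-explanatory superscripts recording in which ambient stratum the normal bundle is taken). Feeding the identities $H_{I,j'}=H_{I\cup\{j\},j''}\cap\ker(c_j)$ and $\beta_W^{\cI'}=\beta_W^{\cI\cup\{j\}}\cup(c_j)$ into Definition~\ref{def.psiIJ}, one checks that the symbol $(H_{I,j'}\subseteq H',Z'\actsfromleft k(W),\beta_W^{\cI'},(c_i)_{i\in I})$ that $W$ contributes to $\chi_I(X\actsfromright G,(D_i)_{i\in\cI'})$ is precisely the image under $\psi_{I\cup\{j\},I}$ of the symbol $(H_{I\cup\{j\},j''}\subseteq H',Z'\actsfromleft k(W),\beta_W^{\cI\cup\{j\}},(c_i)_{i\in I\cup\{j\}})$ that $W$ contributes to $\chi_{I\cup\{j\}}(X\actsfromright G,(D_i)_{i\in\cI})$. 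Since $W\mapsto W$ is a $Z_G(H')$-equivariant bijection between the components contributing to the two sides — each $W\subseteq D_j$ determines its ambient $D^{\circ,\cI}_{I\cup\{j\},j''}$, and conversely every component occurring for $\chi_{I\cup\{j\}}$ lies in $D_j$ and in some $D^{\circ,\cI'}_{I,j'}$ — summation finishes the argument. The delicate points are exactly these three generic identities — $c_j\neq0$, the stabilizer equality, and the normal bundle splitting — together with checking that relation \textbf{(C)} absorbs the freedom in the choice of conjugacy-class representatives, so that the two families of symbols agree on the nose rather than merely modulo the relations of $\Burn_{n,I}(G)$.
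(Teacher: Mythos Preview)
Your unpacking is exactly what the paper means by ``immediate from the definitions'' (the paper gives no further detail), and the overall architecture --- split $D^{\circ,\cI'}_{I,j'}$ into the open piece $D^{\circ,\cI}_{I,j'}$ and its trace on $D_j$, then match term by term --- is correct.

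There is one point to tighten in the converse direction of your ``bijection''. Your argument that $c_j|_{H'}\ne 0$ is carried out for a component $W$ of $(D^{\circ,\cI'}_{I,j'})^{H'}$ with $W\subseteq D_j$, and that is fine. But you then assert that conversely every component $W$ appearing in $\chi_{I\cup\{j\}}$ --- that is, a component of $(D^{\circ,\cI}_{I\cup\{j\},j''})^{H'}$ with generic stabilizer $H'$ --- is a component of the fixed locus in $D^{\circ,\cI'}_{I,j'}$. This can fail: if $c_j|_{H'}=0$, the $H'$-fixed locus in the larger stratum extends transversally off $D_j$, so $W$ is only a hypersurface slice of a strictly larger component $\widetilde{W}\not\subseteq D_j$. (A toy example: $G=\Z/2$ on $\bP^2$ by $(x{:}y{:}z)\mapsto(x{:}y{:}{-z})$, $D_1=\{z=0\}$, $D_2=\{x=0\}$, $I=\emptyset$, $j=2$; the generic trivial-stabilizer component $D_2\setminus D_1$ of $\chi_{\{2\}}$ has $c_2=0$ and does not correspond to any component on the right.) The repair is immediate and already implicit in what you wrote: precisely when $c_j|_{H'}=0$, Definition~\ref{def.psiIJ} sends the symbol of $W$ to $0$, so these ``extra'' components of $\chi_{I\cup\{j\}}$ contribute nothing after applying $\psi_{I\cup\{j\},I}$, and your bijection is the correct one between components with $c_j|_{H'}\ne 0$ on the $\chi_{I\cup\{j\}}$ side and components contained in $D_j$ on the right-hand side. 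With that adjustment the proof is complete.
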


\begin{proof}
This is immediate from the definitions.
\end{proof}

\begin{lemm}
\label{lem.chi}
In the setting of Notation \ref{not.chi}, for $I\subseteq\cI$
we have
\[
\chi_I(X\actsfromright G,(D_i)_{i\in \cI})=\sum_{I\subseteq J\subseteq \cI}(-1)^{|J|-|I|}\psi_{J,I}(\chi_J(X\actsfromright G,(D_i)_{i\in J})).
\]
\end{lemm}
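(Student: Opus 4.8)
The plan is to induct on the quantity $m:=|\cI\setminus I|$, the number of divisor indices not lying in $I$, proving the asserted identity for every configuration as in Notation~\ref{not.chi} together with a choice of subset $I\subseteq\cI$. When $m=0$ we have $I=\cI$: the only $J$ with $I\subseteq J\subseteq\cI$ is $J=\cI$ itself, and since $\psi_{\cI,\cI}=\mathrm{id}$ the identity holds trivially.

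For the inductive step, suppose $m\ge 1$ and fix some $j\in\cI\setminus I$. Applying Lemma~\ref{lem.psi} rewrites the left-hand side as
\[
\chi_I(X\actsfromright G,(D_i)_{i\in \cI})
=\chi_I(X\actsfromright G,(D_i)_{i\in \cI\setminus\{j\}})
-\psi_{I\cup\{j\},I}\bigl(\chi_{I\cup\{j\}}(X\actsfromright G,(D_i)_{i\in \cI})\bigr).
\]
Each summand on the right is an instance of $\chi$ attached to a configuration for which the analogue of $m$ has dropped by one: the first to the sub-divisor $(D_i)_{i\in \cI\setminus\{j\}}$, which is again simple normal crossing and satisfies the hypotheses of Notation~\ref{not.chi}, paired with the subset $I$; the second to the same divisor $(D_i)_{i\in\cI}$ paired with the enlarged subset $I\cup\{j\}$ (the generic stabilizers along components of $D_{I\cup\{j\}}$ are abelian by Assumption~2). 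Thus the inductive hypothesis applies to both.

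Expanding the first summand via the inductive hypothesis produces exactly the terms $(-1)^{|J|-|I|}\psi_{J,I}(\chi_J(X\actsfromright G,(D_i)_{i\in J}))$ for those $J$ with $I\subseteq J\subseteq\cI$ and $j\notin J$. For the second summand, the inductive hypothesis gives
\[
\chi_{I\cup\{j\}}(X\actsfromright G,(D_i)_{i\in \cI})
=\sum_{I\cup\{j\}\subseteq J\subseteq \cI}(-1)^{|J|-|I|-1}\,\psi_{J,I\cup\{j\}}\bigl(\chi_J(X\actsfromright G,(D_i)_{i\in J})\bigr);
\]
applying $\psi_{I\cup\{j\},I}$, invoking functoriality of the maps $\psi$ in the form $\psi_{I\cup\{j\},I}\circ\psi_{J,I\cup\{j\}}=\psi_{J,I}$, and absorbing the leading minus sign, reproduces precisely the terms for those $J$ with $I\subseteq J\subseteq\cI$ and $j\in J$. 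Summing the two contributions recovers the full sum over all $J$ with $I\subseteq J\subseteq\cI$, completing the induction.

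The argument is essentially bookkeeping, and I do not expect a genuine obstacle. The only points deserving care are that $m=|\cI\setminus I|$ decreases under both of the moves used to pass to a simpler configuration — shrinking the ambient index set and enlarging $I$ — so that a single induction handles both, and that the composition law $\psi_{J,K}\circ\psi_{I,J}=\psi_{I,K}$ is applied in the correct direction when simplifying the second summand.
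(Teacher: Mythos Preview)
Your proof is correct and follows exactly the route the paper indicates: induction on $|\cI\setminus I|$, using Lemma~\ref{lem.psi} at each step and the functoriality $\psi_{I\cup\{j\},I}\circ\psi_{J,I\cup\{j\}}=\psi_{J,I}$ to combine the two families of terms. The paper's own proof is the single sentence ``This follows by induction on $|\cI\setminus I|$, from Lemma~\ref{lem.psi}''; you have simply written out the details it omits.
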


\begin{proof}
This follows by induction on $|\cI\setminus I|$, from Lemma \ref{lem.psi}.
\end{proof}

\begin{prop}
\label{prop.chi}
In the setting of Notation \ref{not.chi},
\[
[\mathcal{N}_{D_I/X}\actsfromright G]^{\mathrm{naive}}=\sum_{J\subseteq \cI}\psi_J(\omega_{I\cup J,J}(\chi_{I\cup J}(X\actsfromright G,(D_i)_{i\in \cI}))).
\]
In particular,
\[ [X\actsfromright G]^{\mathrm{naive}}=\sum_{J\subseteq \cI}\psi_J(\chi_J(X\actsfromright G,(D_i)_{i\in \cI})); \]
this is $[X\actsfromright G]$ when $X$ is projective.
\end{prop}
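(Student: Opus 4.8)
The plan is to prove the identity $[\mathcal{N}_{D_I/X}\actsfromright G]^{\mathrm{naive}}=\sum_{J\subseteq \cI}\psi_J(\omega_{I\cup J,J}(\chi_{I\cup J}(X\actsfromright G,(D_i)_{i\in \cI})))$ by first reducing it, via the inclusion--exclusion Lemma \ref{lem.chi}, to a statement about the stratification of $D_I$ alone, and then recognizing the right-hand side as the definition of the class of the normal bundle. First I would apply Lemma \ref{lem.chi} to rewrite each $\chi_{I\cup J}$ appearing on the right in terms of the ``open-stratum'' pieces $\chi_{\cI'}$ for $I\cup J\subseteq \cI'\subseteq \cI$; but it is cleaner to go the other direction and substitute into the right-hand side the expression
\[ \omega_{I\cup J,J}(\chi_{I\cup J}(X\actsfromright G,(D_i)_{i\in \cI})) \]
using \eqref{eqn.omegapsi} to commute $\omega$ past the $\psi$'s, so that the outer sum over $J$ collapses to a sum over strata of $D_I$ weighted by the combinatorics of which divisors pass through each stratum. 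The key bookkeeping tool is the functoriality $\psi_{J,K}\circ\psi_{I,J}=\psi_{I,K}$ together with the commutation relation \eqref{eqn.omegapsi}; with these, the double sum $\sum_{J\subseteq\cI}\psi_J(\omega_{I\cup J,J}(\cdots))$ telescopes.

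The second and more substantive step is to identify the resulting expression with $[\mathcal{N}_{D_I/X}\actsfromright G]^{\mathrm{naive}}$. Here I would use the explicit description of $\chi_{I\cup J}$ from Example \ref{exa.chicontinued}: a symbol $(H_{I\cup J,j}\subseteq H',Z'\actsfromleft k(W),\beta_W,\gamma)$ records a stratum $W$ inside $D^\circ_{I\cup J,j}$ together with its normal-bundle data $\beta_W$ in the open stratum and the characters $\gamma$ coming from the $D_i$. Applying $\omega_{I\cup J,J}$ performs Construction \textbf{(A)} on the characters indexed by $(I\cup J)\setminus J=I$, which is exactly the operation of projectivizing those normal directions — i.e., passing from $D^\circ$ to the total space of $\mathcal{N}_{D_I/X}$ restricted to that open stratum — and then $\psi_J$ folds the remaining $j\in J$ characters (the divisors $D_j$ with $j\notin I$ that meet $D_I$) back into $\beta$, recording them as further normal directions of the ambient variety $\mathcal{N}_{D_I/X}$. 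Summing over $J$ reconstructs, stratum by stratum, precisely the sum in Definition \ref{defn.classXG} applied to $\mathcal{N}_{D_I/X}$ with its induced $G$-action: each locally closed piece of $\mathcal{N}_{D_I/X}$ lying over $D^\circ_{I\cup J,j}$ with the $J$-divisor directions included is counted exactly once. The ``naive'' superscript is what allows us to ignore issues of non-properness of $\mathcal{N}_{D_I/X}$ and simply sum the symbol contributions.

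For the ``in particular'' clause, I take $I=\emptyset$, so $D_I=X$ and $\mathcal{N}_{D_I/X}=X$ (the zero bundle), and $\omega_{J,J}$ is the identity, giving $[X\actsfromright G]^{\mathrm{naive}}=\sum_{J\subseteq\cI}\psi_J(\chi_J(X\actsfromright G,(D_i)_{i\in\cI}))$ directly. That this equals $[X\actsfromright G]$ when $X$ is projective is because, for projective $X$, the naive class — defined as the formal sum of symbol contributions over all strata — coincides with the honest class of Definition \ref{defn.classXG}, there being no non-compact ambient variety to cause a discrepancy; here one invokes that the $\psi_J(\chi_J)$ exhaust exactly the strata appearing in Definition \ref{defn.classXG}, namely the $Z_G(H)$-orbits of components $F$ with their normal-bundle representations $\beta_F(X)$, now organized according to how many of the $D_i$ contain $F$.

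The main obstacle I anticipate is the combinatorial verification in the first step that the double sum over $J$ and over strata, after the $\omega$--$\psi$ commutations, reorganizes into a single sum over strata of $\mathcal{N}_{D_I/X}$ with each stratum weighted by $1$ rather than by some alternating binomial coefficient. The potential $(-1)^{|J|}$-type cancellations implicit in Lemma \ref{lem.chi} must be tracked carefully against the $\psi_J$'s, which is why routing the argument through \eqref{eqn.omegapsi} and the functoriality relations — rather than brute inclusion--exclusion — is essential; getting the index conventions (which $\gamma$-entries survive, which move to $\beta$, which are consumed by $\omega$) exactly right is the delicate part, though each individual step is a direct consequence of the definitions in \cite{KT-struct} and the constructions recalled above.
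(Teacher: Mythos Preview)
Your approach uses the same key tools as the paper --- the commutation relation \eqref{eqn.omegapsi} and functoriality of $\omega$ and $\psi$ --- and is essentially correct, but the paper's route is much shorter. Rather than re-deriving the identification of the rearranged sum with $[\mathcal{N}_{D_I/X}\actsfromright G]^{\mathrm{naive}}$ from scratch (your ``second step''), the paper simply \emph{starts} from the formula for this class already established in \cite[\S 4]{KT-struct}, which is a double sum in the $\chi_{I,J}$ notation; applying \eqref{eqn.omegapsi} and functoriality, then substituting $M=I\cup J$, collapses it to the single sum over $J\subseteq\cI$ in the statement. So what you describe as the ``delicate part'' --- checking that each stratum is weighted by $1$ and that the normal-bundle symbols match --- is exactly the content outsourced to \cite{KT-struct}; your direct verification would reproduce that argument.

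One small index slip: you write $(I\cup J)\setminus J=I$, but this equals $I\setminus J$ in general, since $J$ ranges over all subsets of $\cI$ and may meet $I$. This matters for the bookkeeping of which characters $\omega_{I\cup J,J}$ consumes, though it does not affect the overall strategy once the substitution $M=I\cup J$ is made (the point being that distinct $J$ can give the same $M$, and the rearrangement absorbs this).
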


\begin{proof}
Starting with the formula from \cite[\S 4]{KT-struct}, we apply the commutativity identity \eqref{eqn.omegapsi} and functoriality.
Rearranging the double sum as a single sum over $J$, with $M=I\cup J$, we obtain the first formula in the proposition.
Taking $I=\emptyset$, we get the second formula.
\end{proof}

\begin{coro}
\label{cor.chi}
In the setting of Notation \ref{not.chi}, with $U:=X\setminus D$ we have
\[
[X\actsfromright G]^{\mathrm{naive}}=[U\actsfromright G]^{\mathrm{naive}}+\sum_{\emptyset\ne J\subseteq \cI}(-1)^{|J|-1}\psi_J(\chi_J(X\actsfromright G,(D_i)_{i\in J}));
\]
this is $[X\actsfromright G]$ when $X$ is projective.
\end{coro}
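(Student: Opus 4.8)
The plan is to deduce Corollary~\ref{cor.chi} from Proposition~\ref{prop.chi} by a M\"obius-inversion argument, applied only to the positive-codimensional strata so that the sum does not simply telescope back to a tautology.

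First I would isolate the $J=\emptyset$ term in the formula
\[
[X\actsfromright G]^{\mathrm{naive}}=\sum_{J\subseteq\cI}\psi_J(\chi_J(X\actsfromright G,(D_i)_{i\in\cI}))
\]
of Proposition~\ref{prop.chi}. Under the identification $\Burn_{n,\emptyset}(G)=\Burn_n(G)$ the map $\psi_\emptyset$ is the identity, and by Notation~\ref{not.chi} and Example~\ref{exa.chicontinued} the term $\chi_\emptyset(X\actsfromright G,(D_i)_{i\in\cI})$ is precisely the sum of symbols attached to $Z_G(H')$-orbits of strata contained in the open part $D^\circ_\emptyset=X\setminus D=U$, i.e.\ it equals $[U\actsfromright G]^{\mathrm{naive}}$. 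Hence
\[
[X\actsfromright G]^{\mathrm{naive}}=[U\actsfromright G]^{\mathrm{naive}}+\sum_{\emptyset\ne J\subseteq\cI}\psi_J(\chi_J(X\actsfromright G,(D_i)_{i\in\cI})).
\]

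Next I would rewrite each remaining summand, which refers to the full divisor collection, in terms of the $\chi_{J'}(X\actsfromright G,(D_i)_{i\in J'})$ appearing in the target. Applying Lemma~\ref{lem.chi} with $I=J$ and then the functoriality identity $\psi_{J,\emptyset}\circ\psi_{J',J}=\psi_{J',\emptyset}$ (the case $K=\emptyset$ of $\psi_{J,K}\circ\psi_{I,J}=\psi_{I,K}$, valid since $J\subseteq J'$), together with $\psi_{J',\emptyset}=\psi_{J'}$, gives
\[
\psi_J(\chi_J(X\actsfromright G,(D_i)_{i\in\cI}))=\sum_{J\subseteq J'\subseteq\cI}(-1)^{|J'|-|J|}\,\psi_{J'}(\chi_{J'}(X\actsfromright G,(D_i)_{i\in J'})).
\]
Substituting this into the previous display and interchanging the order of summation, the coefficient of $\psi_{J'}(\chi_{J'}(X\actsfromright G,(D_i)_{i\in J'}))$ is $\sum_{\emptyset\ne J\subseteq J'}(-1)^{|J'|-|J|}$, which vanishes when $J'=\emptyset$ and, for $J'\ne\emptyset$, equals $\bigl(\sum_{J\subseteq J'}(-1)^{|J'|-|J|}\bigr)-(-1)^{|J'|}=-(-1)^{|J'|}=(-1)^{|J'|-1}$. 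This yields the claimed identity, and the final clause ``this is $[X\actsfromright G]$ when $X$ is projective'' is inherited from the corresponding clause of Proposition~\ref{prop.chi}.

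The argument is entirely formal once Proposition~\ref{prop.chi}, Lemma~\ref{lem.chi}, and the functoriality of the $\psi_{I,J}$ are in hand; the only step deserving a line of explanation rather than a citation is the identification of the $J=\emptyset$ term with $[U\actsfromright G]^{\mathrm{naive}}$, and the only arithmetic input is the elementary fact that $\sum_{J\subseteq J'}(-1)^{|J'|-|J|}=0$ for $J'\ne\emptyset$. The one thing to be careful about is to perform the M\"obius inversion on the truncated sum $\sum_{\emptyset\ne J\subseteq\cI}$ only: inverting the full sum over all $J\subseteq\cI$ would collapse it to the trivial identity $[X\actsfromright G]^{\mathrm{naive}}=[X\actsfromright G]^{\mathrm{naive}}$.
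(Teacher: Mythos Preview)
Your proof is correct and follows essentially the same approach as the paper, which simply cites Lemma~\ref{lem.chi} and Proposition~\ref{prop.chi}; you have spelled out the M\"obius-inversion step and the identification of the $J=\emptyset$ term with $[U\actsfromright G]^{\mathrm{naive}}$ that are implicit in that citation. One tiny remark: after interchanging sums, $J'$ is automatically nonempty since $J'\supseteq J\ne\emptyset$, so the ``vanishes when $J'=\emptyset$'' clause is vacuous (though not wrong).
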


\begin{proof}
This follows from Lemma \ref{lem.chi} and Proposition \ref{prop.chi}.
\end{proof}

\section{Classes of sums of line bundles}
\label{sect:sumLi}

In this section,
we introduce classes in 
$$
\Burn_{n,I}(G)
$$ 
of quasi-projective varieties
with collections of line bundles, indexed by a finite index set $I\subset \N$.

\begin{lemm}
\label{lem.bundles}
Let $X$ be a smooth quasi-projective $G$-variety over $k$ with $G$-linearized line bundles $L_i$,
for $i\in I$.
Let $H$ be the stabilizer at the generic point of a component of $X$,
and let us denote the $Z_G(H)$-orbit of the component by $Y$.
The following are equivalent.
\begin{itemize}
\item[(i)] The $Z$-action on $Y$ satisfies Assumption 2, and
$H$ is abelian with $H^\vee$ generated by the characters determined by $L_i$ for $i\in I$.
\item[(ii)] The $G$-action on $\bigoplus_{i\in I}L_i$ is
generically free and satisfies Assumption 2.
\item[(iii)] The $G$-action on $\bigoplus_{i\in I}L_i$ is
generically free and satisfies Assumption 2,
even when
$\Pic^G(\bigoplus_{i\in I}L_i)$ is replaced by
the subgroup generated by the classes of
$L_i$ for $i\in I$, and
pullbacks of equivariant line bundles
on $X$ with trivial generic stabilizer action on components.
\end{itemize}
\end{lemm}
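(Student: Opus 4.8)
The plan is to prove the chain of equivalences (i) $\Leftrightarrow$ (ii) $\Leftrightarrow$ (iii) by unwinding the definitions of generic freeness and Assumption 2 for the total space $E:=\bigoplus_{i\in I}L_i$, and relating the stabilizer data on $E$ to the stabilizer data on $X$. The geometric picture is that $E\to X$ is a $G$-equivariant vector bundle, so a component of $E$ maps onto a component of $X$, and the generic fiber of $E$ over such a component is an affine space $\A^{|I|}$ on which the generic stabilizer $H$ of the base component acts linearly through the characters $\chi_i$ cut out by $L_i$. The generic stabilizer of the component of $E$ is therefore $\bigcap_{i\in I}\ker(\chi_i)$, which is trivial exactly when the $\chi_i$ generate $H^\vee$; this is the core bookkeeping identity underlying all three conditions.

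First I would treat (i) $\Rightarrow$ (ii). Assume the $Z_G(H)$-orbit $Y$ of a base component satisfies Assumption 2 and that $H$ is abelian with $H^\vee$ generated by the $\chi_i$. Genericity of freeness for $E$ follows from the fiberwise computation above: on the dense open locus of $E$ lying over the free locus of $Y$ already chosen, the stabilizer is $\bigcap\ker(\chi_i)=\mathrm{triv}$ since the $\chi_i$ generate $H^\vee$; one checks this holds on every $G$-orbit of components of $E$ by running the same argument over each $Y$. For Assumption 2 on $E$: the stabilizers on $E$ are among the stabilizers on $X$ (for strata of $E$ dominating strata of $X$) together with groups of the form $H'\cap\bigcap_{i\in S}\ker(\chi_i)$ for subsets $S\subseteq I$ — all abelian, since $H'$ is. The surjectivity-of-$\Pic^G$-pairing condition is inherited: $\Pic^G(E)\cong \Pic^G(X)$ (homotopy invariance of equivariant Picard groups for vector bundles), so the composite homomorphism to $\tilde H^\vee$ for a stratum of $E$ with generic stabilizer $\tilde H$ factors through the corresponding surjection on $X$, possibly composed with the extra characters $\chi_i$ — here I must be slightly careful that the characters $\chi_i$ themselves, which are the normal directions along the zero section, land in the image, and this is exactly guaranteed by the hypothesis that the $\chi_i$ generate $H^\vee$.

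Next, (ii) $\Rightarrow$ (iii) is the one-line observation that restricting to the subgroup of $\Pic^G(E)$ generated by the $L_i$ and pullbacks from $X$ (with trivial generic action) only shrinks the group in the surjectivity condition, so the nontrivial direction is whether the smaller group still surjects — but in fact the preceding analysis already used only those classes, so (ii) and (iii) are equivalent by the same computation; I would phrase (ii) $\Rightarrow$ (iii) as ``inspect the proof of (i) $\Rightarrow$ (ii)'' and (iii) $\Rightarrow$ (ii) as trivial since a larger $\Pic^G$ can only help surjectivity. Finally, (iii) $\Rightarrow$ (i): generic freeness of $E$ forces $\bigcap_{i\in I}\ker(\chi_i)=\mathrm{triv}$ over each base component, i.e., the $\chi_i$ generate $H^\vee$ and in particular $H$ (being a quotient of the abelian $H^\vee$'s dual, or rather: abelianness of $H$ follows because $H$ is the generic stabilizer of a component of $E$ and Assumption 2 for $E$ demands abelian stabilizers — note $H$ itself occurs, being the generic stabilizer of the preimage of a codimension-zero... more precisely $H$ is the stabilizer along a suitable stratum of $E$). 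Then the $\Pic^G$-pairing surjectivity for $Y$ is extracted by restricting the Assumption 2 condition on $E$ to the zero-section stratum and using $\Pic^G(E)\cong\Pic^G(X)$.

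The main obstacle I anticipate is the careful identification of the strata of $E$ and their generic stabilizers with combinations of base strata and coordinate subspaces of fibers, and correspondingly tracking how the surjectivity-of-pairing clause in Assumption 2 transfers back and forth — in particular, verifying that the equivariant Picard group of the vector bundle total space is identified with that of the base compatibly with all the restriction maps to $\tilde H^\vee$ appearing in Assumption 2, and that the characters $\chi_i$ are correctly accounted for as normal directions. Everything else is a direct, if somewhat lengthy, translation between the two sides; I would organize it so that the fiberwise stabilizer computation $\mathrm{Stab}=\bigcap\ker(\chi_i)$ is stated once and then invoked repeatedly.
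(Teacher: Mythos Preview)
Your direct approach is essentially the argument that the paper's reference \cite[Lemma 5.1]{KT-struct} carries out (the paper itself just points there and records two modifications), so you are on the right track. However, there is one specific step you have glossed over, and it is precisely the step the paper singles out as requiring care.

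Condition (i) is phrased for the $Z=Z_G(H)/H$-action on $Y$, while (ii) and (iii) concern the $G$-action on $E=\bigoplus_{i\in I}L_i$ over $X$. Your (i) $\Rightarrow$ (ii) argument slides from one to the other via $\Pic^G(E)\cong \Pic^G(X)$, but Assumption~2 in (i) gives surjectivity statements out of $\Pic^{Z_G(H)}(Y)$, not $\Pic^G(X)$. To identify these, you need that $Z_G(H)$ is the \emph{maximal} subgroup of $G$ leaving $Y$ invariant, so that $X$ is induced from $Y$ and a $Z_G(H)$-linearized line bundle on $Y$ determines a $G$-linearized line bundle on $X$. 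A~priori one only knows that the stabilizer of a component lies in $N_G(H)$. The argument that it actually lies in $Z_G(H)$ uses your hypothesis that the $\chi_i$ generate $H^\vee$: since each $L_i$ is $G$-linearized, any $g$ fixing a component must satisfy $\chi_i(g^{-1}hg)=\chi_i(h)$ for all $h\in H$ and all $i$, hence $g\in Z_G(H)$. This is exactly the content the paper flags (with a pointer to \cite[Lemma 2.1]{KT-struct}); you should make it explicit.

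A minor point: your (ii) $\Rightarrow$ (iii) step (``inspect the proof of (i) $\Rightarrow$ (ii)'') is circular as written. Cleaner is: from (ii), generic freeness on $E$ forces $\bigcap_i\ker(\chi_i)=\{1\}$ and Assumption~2 on $E$ makes $H$ abelian (it is a stabilizer along the zero section), so the $\chi_i$ generate $H^\vee$; then every class in $\Pic^G(X)$ differs from a combination of the $L_i$ by a bundle with trivial generic $H$-action, and the subgroup in (iii) is all of $\Pic^G(E)$.
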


\begin{proof}
A similar statement is given in \cite[Lemma 5.1]{KT-struct}, so
we just explain the
modifications, needed to adapt the proof.
First, we treat the total space of a sum of line bundles rather than the projectivization; the statements receive appropriate modification, and the proof needs to be modified in an analogous fashion.
Second, since (i) is stated in terms of centralizer rather than normalizer, we need to verify that statement (i) implies
the maximality of $Z_G(H)$, among subgroups of $G$ leaving $Y$ invariant
(so that a $Z_G(H)$-linearized line bundle on $Y$ will determine a $G$-linearized line bundle on $X$).
We leave the first modification to the reader and indicate only that the second is accomplished
by an argument, as in \cite[Lemma 2.1]{KT-struct}.
\end{proof}

\begin{rema}
\label{rem.bundles}
When the conditions of Lemma \ref{lem.bundles} are satisfied, we may read off a $\Z^I$-pairing on $H$
from statement (i).
\end{rema}

\begin{defi}
\label{def.naivewithLi}
Let $X$ be a smooth projective $G$-variety over $k$ with $G$-linearized line bundles $L_i$, for $i\in I$.
We suppose that the equivalent conditions of Lemma \ref{lem.bundles} are satisfied.
Then we define
\[ [X\actsfromright G]_{(L_i)_{i\in I}}\in \Burn_{n,I}(G), \]
using the notation of Lemma \ref{lem.bundles} and the $\Z^I$-pairing on $H$ of Remark \ref{rem.bundles}:
\[ [X\actsfromright G]_{(L_i)_{i\in I}}:=
\!\!\sum_{\substack{H'\supseteq H\\ \text{with $\Pic^G(X)$-pairing}}}\!\!
\sum_{W\subset Y}
(H\subseteq H',Z'\actsfromleft k(W),\beta_W(Y),\gamma),
\]
where
\begin{itemize}
\item the outer sum is over
conjugacy classes of abelian subgroups of $Z_G(H)$, containing $H$, with compatible $\Pic^G(X)$-pairing, where the compatibility is with the $\Z^I$-pairing on $H$;
\item the inner sum is over $Z_G(H')$-orbits of components $W\subset Y$ with generic stabilizer $H'$ and given $\Pic^G(X)$-pairing;
\item $\beta_W(Y)$ records the generic normal bundle representation along $W$;
\item $\gamma$ consists of the characters coming from $D_i$ for $i\in I$.
\end{itemize}
With notation as above, but $X$ only assumed quasi-projective, we define
\[ [X\actsfromright G]^{\mathrm{naive}}_{(L_i)_{i\in I}}\in \Burn_{n,I}(G) \]
by the same formula.
\end{defi}

\begin{exam}
\label{exa.naiveLi}
We suppose $I=\{1,\dots,r\}$. Then
\[
[X\actsfromright G,(L_1,\dots,L_r)]^{\mathrm{naive}}=\sum_{J\subseteq I}\psi_J(\omega_{I,J}([X\actsfromright G]^{\mathrm{naive}}_{(L_i)_{i\in I}})) \in \Burn_{n+r}(G).
\]
Here, we employ the notation 
$$
[X\actsfromright G,(L_1,\dots,L_r)]^{\mathrm{naive}}:=[L_1\oplus\dots\oplus L_r\actsfromright G]^{\mathrm{naive}},
$$
introduced in \cite[\S 7]{BnG}.
\end{exam}

\begin{defi}
\label{defn:compa}
Let $\pi\colon X'\to X$ be a morphism of quasi-projective $G$-varieties and $(L_i)_{i\in I}$, resp.\ $(L'_i)_{i\in I}$, a collection of $G$-linearized line bundles on $X$, resp.\ on $X'$.
We say that
$\pi$ is \emph{compatible with the line bundles} if $\pi^*(L_i)$ is equivariantly isomorphic to $L'_i$, for all $i\in I$.
\end{defi}

\begin{prop}
\label{prop.bir}
Let $X$ and $X'$ be smooth projective $G$-varieties over $k$,
with $G$-linearized line bundles $L_i$, respectively, $L'_i$, for $i\in I$.
We suppose that both $X$ and $X'$ satisfy the conditions of Lemma \ref{lem.bundles}.
If there is a $G$-equivariant birational morphism $X'\to X$, compatible with the line bundles, then
\[
[X\actsfromright G]_{(L_i)_{i\in I}}=
[X'\actsfromright G]_{(L'_i)_{i\in I}}.
\]
In the same situation with smooth quasi-projective $G$-varieties $X$ and $X'$, if there is a birational projective morphism $X'\to X$, compatible with the line bundles, then
\[
[X\actsfromright G]^{\mathrm{naive}}_{(L_i)_{i\in I}}=
[X'\actsfromright G]^{\mathrm{naive}}_{(L'_i)_{i\in I}}.
\]
\end{prop}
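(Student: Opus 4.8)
The plan is to reduce the statement to the birational invariance of the ordinary equivariant Burnside class, Theorem~\ref{them:inv}, applied in a suitable auxiliary space. First I would observe that, by Definition~\ref{def.naivewithLi}, the class $[X\actsfromright G]_{(L_i)_{i\in I}}$ is built from exactly the same geometric data (components $W$ of $Y$, their generic stabilizers $H'$, normal bundle representations $\beta_W$, and the $\Z^I$-pairing $\gamma$ recorded by the $L_i$) that enters the ordinary class $[L_1\oplus\dots\oplus L_r\actsfromright G]$ of the total space $V:=\bigoplus_{i\in I}L_i$, merely organized as an indexed symbol rather than being collapsed via $\psi_J$ and $\omega_{I,J}$. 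Indeed, Example~\ref{exa.naiveLi} expresses $[V\actsfromright G]^{\mathrm{naive}}$ as $\sum_{J\subseteq I}\psi_J(\omega_{I,J}([X\actsfromright G]^{\mathrm{naive}}_{(L_i)_{i\in I}}))$, and the analogous identity holds in the projective setting after suitably compactifying. So the strategy is: show that the geometric birational morphism $\pi\colon X'\to X$, being compatible with the line bundles, induces a $G$-equivariant birational map $V'\dashrightarrow V$ between the total spaces (in fact a birational morphism, since $\pi$ is), and then transfer the equality of the ordinary classes back to an equality of the indexed classes.

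The key steps, in order, would be: (1) From $\pi^*(L_i)\cong L'_i$ equivariantly, assemble a $G$-equivariant morphism $\tilde\pi\colon V'=\bigoplus_i L'_i\to \pi^*V=\bigoplus_i\pi^*(L_i)\to V$ covering $\pi$; since $\pi$ is birational (and proper, in the first case), $\tilde\pi$ is birational (and proper). (2) Verify that $V'$ and $V$ satisfy Assumption~2 and generic freeness — this is exactly condition (ii) of Lemma~\ref{lem.bundles}, which we have assumed for both $X$ and $X'$; moreover condition (iii) of that lemma lets us restrict attention to the sub-$\Pic^G$ generated by the $L_i$ together with pullbacks from $X$ with trivial generic action, which is what is needed so that the class computed on $V$ only sees the data appearing in the indexed symbol. (3) Apply Theorem~\ref{them:inv} (in the projective case, after equivariantly compactifying $V$ and $V'$ compatibly, e.g.\ by $\bP(L_i\oplus\cO)$, so that $\tilde\pi$ extends to an equivariant birational morphism of smooth projective varieties), or the "naive" version of birational invariance under a birational projective morphism, to conclude $[V\actsfromright G]^{(\mathrm{naive})}=[V'\actsfromright G]^{(\mathrm{naive})}$ in $\Burn_{n+r}(G)$. (4) Use the indexed refinement — the fact that the indexed class is a $G$-equivariant birational invariant in the sense established in \cite[\S 4--5]{KT-struct}, or directly the injectivity of the system of maps $(\psi_J\circ\omega_{I,J})_J$ on the relevant summand — to upgrade the equality of the collapsed classes in $\Burn_{n+r}(G)$ to the desired equality $[X\actsfromright G]_{(L_i)_{i\in I}}=[X'\actsfromright G]_{(L'_i)_{i\in I}}$ in $\Burn_{n,I}(G)$.

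Alternatively, and perhaps more cleanly, one can argue entirely within the indexed formalism: the indexed class $[X\actsfromright G]_{(L_i)_{i\in I}}$ is itself a $G$-equivariant birational invariant of the pair $(X,(L_i))$ among those satisfying Lemma~\ref{lem.bundles}, for the same reason Theorem~\ref{them:inv} holds — one factors the birational morphism $\pi$ into blow-ups with smooth centers (using equivariant resolution / weak factorization in characteristic zero, legitimate since $\chara k=0$), checks that each such blow-up, compatible with the line bundles, changes the indexed class by the blow-up relation $\mathbf{(B2)}$ in $\Burn_{n,I}(G)$, and hence leaves it invariant. This reduces the whole proposition to the single-blow-up case, which is a direct local computation of strata, normal bundles, and the induced $\Z^I$-pairing, entirely parallel to the computation underlying \cite[Theorem 5.1]{BnG} but carried out with the extra index $\gamma$ carried along passively (the $L_i$ pull back, so their characters are unchanged).

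The main obstacle I expect is step (4) / the indexed refinement: the maps $\psi_J\circ\omega_{I,J}$ are not jointly injective in general, so equality of the ordinary classes of the total spaces does not immediately recover equality of the indexed classes, and one must instead argue invariance directly at the indexed level. Concretely, the delicate point is bookkeeping the $\Z^I$-pairing $\gamma$ under blow-up: one must confirm that blowing up a smooth $G$-invariant center compatible with the $L_i$ produces new strata whose $\Z^I$-pairing is the one prescribed by pulling back the $L_i$, so that the change in $[X\actsfromright G]_{(L_i)_{i\in I}}$ is captured by the indexed relation $\mathbf{(B2)}$ and vanishes. Once that local blow-up computation is in hand, the rest is the standard weak-factorization argument, and the quasi-projective "naive" case follows by the same local analysis without needing properness.
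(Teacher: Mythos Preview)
Your second (``alternatively'') approach is exactly the paper's proof: factor $\pi$ as a sequence of equivariant smooth blow-ups and blow-downs via equivariant weak factorization (as in \cite[Prop.~3.6]{BnG}), then reduce to a single blow-up and redo the local computation of \cite[Thm.~5.1]{BnG} with the $\Z^I$-pairing $\gamma$ carried along passively, so that the change is absorbed by the indexed relation \textbf{(B2)}.

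Your first approach, by contrast, does not go through, and you already put your finger on why: step~(4) is a genuine gap. The collection of maps $(\psi_J\circ\omega_{I,J})_{J\subseteq I}$ from $\Burn_{n,I}(G)$ to $\Burn_{n+r}(G)$ is not jointly injective, so equality of $[V\actsfromright G]^{(\mathrm{naive})}$ and $[V'\actsfromright G]^{(\mathrm{naive})}$ in $\Burn_{n+r}(G)$ cannot be upgraded to equality of the indexed classes in $\Burn_{n,I}(G)$. Concretely, the indexed group records the individual characters $c_i$ separately in $\gamma$, while the total-space class only sees the orbit of the collection $\{c_i\}$ under the relations \textbf{(O)}, \textbf{(B1)}, \textbf{(B2)} applied freely among all $n+r$ slots; information about which characters sit in $\gamma$ versus $\beta$ is lost. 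There is no ``injectivity on the relevant summand'' to appeal to here, nor does \cite[\S\S4--5]{KT-struct} supply a ready-made birational invariance statement for the indexed class with line bundles --- that is precisely what this proposition is establishing. So the only viable route is the direct one, working at the indexed level from the start, which is your alternative argument and the paper's.
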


\begin{proof}
As in the proof of \cite[Prop.\ 3.6]{BnG}, we may factor $X'\to X$ as a sequence of equivariant smooth blow-ups and their inverses, among quasi-projective $G$-varieties with indexed collections of $G$-linearized line bundles, and thus reduce the proposition to the case of a single equivariant smooth blow-up, that is compatible with the line bundles.
We conclude by arguing as in the proof of \cite[Thm.\ 5.1]{BnG}.
\end{proof}

\begin{rema}
\label{rem.withlinebundles}
The rest of \cite[\S 5]{BnG} extends as well to the setting of smooth quasi-projective $G$-varieties with indexed collections of $G$-linearized line bundles:
\begin{itemize}
\item For the complement $U=X\setminus D$
of a simple normal crossing divisor $D=\bigcup_{j\in \cJ}D_j$, $\cJ:=\{1,\dots,\ell\}$, in
a smooth projective $G$-variety $X$ as in Definition \ref{def.naivewithLi}, we define
\begin{equation}
\label{eqn.UactedGwithLidefinition}
[U \actsfromright G]_{(L_i)_{i\in I}}:=
[X \actsfromright G]_{(L_i)_{i\in I}} + \sum_{\emptyset\neq J\subseteq \cJ} (-1)^{|J|}
[\mathcal{N}_{D_J/X} \actsfromright G]^{\mathrm{naive}}_{(L_i)_{i\in I}}.
\end{equation}
\item We have the identity
\begin{equation}
\label{eqn.UactedGwithLiformula}
[U \actsfromright G]_{(L_i)_{i\in I}}=
[U \actsfromright G]^{\mathrm{naive}}_{(L_i)_{i\in I}}+
\sum_{\emptyset\ne J\subseteq \cJ} (-1)^{|J|} [\mathcal{N}^\circ_{D_J/X} \actsfromright G]^{\mathrm{naive}}_{(L_i)_{i\in I}},
\end{equation}
involving the punctured normal bundle $\mathcal{N}^\circ_{D_J/X}$, introduced in \cite[Defn.\ 5.6]{BnG}.
\item The class $[U \actsfromright G]_{(L_i)_{i\in I}}$ in \eqref{eqn.UactedGwithLidefinition} is independent of the choice of presentation as $X\setminus D$.
Besides the argument as in the proof of \cite[Lemma 5.10]{BnG}, we also need to deal with the
effect of replacing some $L_i$ by another $G$-linearized line bundle on $X$, restricting to the same class in $\Pic^G(U)$.
In fact, \cite[Thm.\ 1]{EG} and the standard excision sequence of Chow groups give a right exact sequence
\[
\Z^\ell\to \Pic^G(X)\to \Pic^G(U)\to 0,
\]
where for simplicity we suppose that the $G$-action on irreducible components of each $D_j$ is transitive; the left-hand map is given by $(\mathcal{N}_{D_j/X})_{j\in\cJ}$.
Replacing $L_i$ by $L_i\otimes \mathcal{N}_{D_j/X}$ leave the right-hand side of \eqref{eqn.UactedGwithLiformula} unchanged.
\item Finally, $[U\actsfromleft G]_{(L_i)_{i\in I}}\in \Burn_{n,I}(G)$ is well-defined and is an equivariant birational invariant of any smooth quasi-projective $G$-variety $U$ with $G$-linearized line bundles $L_i$ for $i\in I$, satisfying just the second half of statement (i) of Lemma \ref{lem.bundles}
(where compatibility with the line bundles is understood in the equivariant birational invariance).
\end{itemize}
\end{rema}

\begin{defi}
\label{def.etaI}
For an abelian subgroup $H$ of $G$, we let
$\Burn_{n,I}(G,H)$ denote the subgroup of $\Burn_{n,I}(G)$, generated by symbols 
$$
(H\subseteq H',Z'\actsfromleft K,\beta,\gamma),
$$
with the given abelian subgroup as the first subgroup in the symbol.
We define the homomorphism
\[ \eta_I\colon \Burn_{n,I}(G,H)\to \Burn_{n-|I|}(Z) \]
by
\[ (H\subseteq H', Z'\actsfromleft K, \beta, \gamma)\mapsto
(H'/H,Z'\actsfromleft K,\beta).
\]
\end{defi}

\begin{prop}
\label{prop.etaI}
In the setting of Definition \ref{def.naivewithLi},
we have
\[ \eta_I([X\actsfromright G]_{(L_i)_{i\in I}})=[Y\actsfromright Z], \]
when $X$ is projective,
\[
\eta_I([X\actsfromright G]^{\mathrm{naive}}_{(L_i)_{i\in I}})=[Y\actsfromright Z]^{\mathrm{naive}},
\]
generally.
In the setting of Remark \ref{rem.withlinebundles},
an analogous identity holds: letting $V$ denote the $Z_G(H)$-orbit of a component of $U$, we have
\[
\eta_I([U\actsfromright G]_{(L_i)_{i\in I}})=[V\actsfromright Z].
\]
\end{prop}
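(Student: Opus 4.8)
The plan is to reduce everything to the projective case and then verify the identity $\eta_I([X\actsfromright G]_{(L_i)_{i\in I}})=[Y\actsfromright Z]$ by directly matching generators in the two defining sums. Writing out the left-hand side using Definition \ref{def.naivewithLi}, a generator $(H\subseteq H',Z'\actsfromleft k(W),\beta_W(Y),\gamma)$ is sent by $\eta_I$ to $(H'/H,Z'\actsfromleft k(W),\beta_W(Y))$; here $Z'=Z_G(H')/H'$ and $W$ ranges over $Z_G(H')$-orbits of components of $Y$ with generic stabilizer $H'$. The claim is that this is exactly the defining sum for $[Y\actsfromright Z]$ (Definition \ref{defn.classXG}) once we recognize $Y$ as a smooth projective $Z$-variety with generically free $Z$-action. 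So the first step is to unwind the group theory: $Z=Z_G(H)/H$ acts on $Y$, and for a component $W\subset Y$ with $G$-stabilizer $H'\supseteq H$, the $Z$-stabilizer is $H'/H$, its centralizer in $Z$ is $Z_G(H')/H$ (using that $H'\subseteq Z_G(H)$, which follows from the conditions of Lemma \ref{lem.bundles} via the argument cited there and in \cite[Lemma 2.1]{KT-struct}), and hence $Z_Z(H'/H)/(H'/H)=Z_G(H')/H'=Z'$. This identifies the middle terms and the abelian-subgroup data on the two sides.

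Next I would check that the summation ranges agree. On the right-hand side, $[Y\actsfromright Z]$ sums over conjugacy classes (in $Z$) of abelian subgroups-with-$\Pic^Z(Y)$-pairing, then over components of $Y^{H'/H}$ with the right pairing, then over $Z_Z(H'/H)$-orbits of such components. On the left, after applying $\eta_I$, we have a sum over conjugacy classes of $H'\supseteq H$ with compatible $\Pic^G(X)$-pairing, then over $Z_G(H')$-orbits of components $W\subset Y$. The key bookkeeping point is that conjugacy in $G$ of subgroups containing $H$ (intersected with $Y$) corresponds to conjugacy in $Z$ of their images, and that $Z_G(H')$-orbits of components of $Y$ map bijectively to $Z_Z(H'/H)$-orbits; this is where I would invoke statement (iii) of Lemma \ref{lem.bundles}, which guarantees that the $\Pic^G(X)$-pairing on $H'$, modulo the part coming from the $L_i$ (recorded separately in $\gamma$), matches the $\Pic^Z(Y)$-pairing on $H'/H$ — more precisely, restriction $\Pic^G(X)\to\Pic^Z(Y)$ is surjective-enough after factoring out the classes of the $L_i$, so the two pairing-indexed sums coincide. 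Finally, $\beta_W(Y)$ is by definition the generic normal bundle representation of $W$ in $Y$, which is precisely $\beta_W$ in Definition \ref{defn.classXG} for $Y\actsfromright Z$, since the normal bundle and its $Z_G(H')$- (equivalently $Z_Z(H'/H)$-) action are unchanged.

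Having done the projective case, the naive case follows because $[X\actsfromright G]^{\mathrm{naive}}_{(L_i)_{i\in I}}$ is given by the identical formula with $X$ merely quasi-projective, and $[Y\actsfromright Z]^{\mathrm{naive}}$ likewise — no projectivity is used in the term-by-term matching above. For the last assertion, in the setting of Remark \ref{rem.withlinebundles}, I would write $U=X\setminus D$, apply $\eta_I$ to \eqref{eqn.UactedGwithLidefinition}, use the projective identity on the $X$ term and the naive identity on each $[\mathcal{N}_{D_J/X}\actsfromright G]^{\mathrm{naive}}_{(L_i)_{i\in I}}$ term, and observe that the resulting alternating sum on the $Z$-side is exactly the defining expression (as in \cite[\S 5]{BnG}, or as recalled for the line-bundle setting in Remark \ref{rem.withlinebundles}) for $[V\actsfromright Z]$ where $V$ is the $Z_G(H)$-orbit of a component of $U$; here one uses that $D\cap Y$ is a simple normal crossing divisor in $Y$ and that $V$ is the complement, so $\eta_I$ intertwines the two presentations. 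I expect the main obstacle to be the pairing-compatibility bookkeeping in the second step: carefully tracking how the $\Pic^G(X)$-pairing on $H'$ decomposes into the $\gamma$-part (the $L_i$ characters, which $\eta_I$ discards) and the part that descends to a $\Pic^Z(Y)$-pairing on $H'/H$, and confirming that this makes the indexing sets of the two sums match bijectively rather than merely surjectively. This is exactly the point for which statement (iii) of Lemma \ref{lem.bundles} was formulated, so the resolution should be a direct appeal to it together with the excision sequence $\Z^\ell\to\Pic^G(X)\to\Pic^G(U)\to 0$ recalled in Remark \ref{rem.withlinebundles}.
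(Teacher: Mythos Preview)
Your approach---unpack the two defining sums and match them term by term---is exactly what the paper does; its entire proof reads ``In each case, the class in $\Burn_{n,I}(G)$ lies in $\Burn_{n,I}(G,H)$. Comparing definitions, we obtain the identities.'' Your plan is a reasonable expansion of that sentence, and the handling of the naive and open cases is fine.

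There is one slip in your group-theoretic unwinding. The assertion $Z_Z(H'/H)=Z_G(H')/H$ need not hold: an element $g\in Z_G(H)$ may satisfy $[g,H']\subseteq H$, so that $gH$ centralizes $H'/H$ in $Z$, without centralizing $H'$ itself (e.g., $H'=C_4$, $H=C_2$, $g$ acting by inversion). Consequently your claimed bijection between $Z_G(H')$-orbits and $Z_Z(H'/H)$-orbits of components can fail. This does not actually break the argument, because the symbol $(H'/H,\,Z_G(H')/H'\actsfromleft k(W),\,\beta)$ produced by $\eta_I$ is to be read via the convention \eqref{eqn.Ind}: it stands for the symbol with algebra induced up to $Z_Z(H'/H)/(H'/H)$, and that induction precisely assembles the several $Z_G(H')$-orbits lying inside a single $Z_Z(H'/H)$-orbit into one term. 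Equivalently, the finer $\Pic^G(X)$-pairing partition on the $X$-side compensates for the finer orbit decomposition---this is the mechanism behind the equivalence of the centralizer and normalizer formulations (Remark \ref{rem.whyequivalent} and \cite[Lemma 2.1]{KT-struct}), and it is what makes the bookkeeping work out, more directly than Lemma \ref{lem.bundles}(iii). With this adjustment, your plan goes through.
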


\begin{proof}
In each case, the class in $\Burn_{n,I}(G)$ lies in
$\Burn_{n,I}(G,H)$.
Comparing definitions, we obtain the identities.
\end{proof}

\begin{coro}
\label{cor.etaI}
In the setting of Definition \ref{def.naivewithLi},
if $H$ is central in $G$, then
\[ \eta_I([X\actsfromright G]_{(L_i)_{i\in I}})=[X\actsfromright G/H], \]
when $X$ is projective,
\[
\eta_I([X\actsfromright G]^{\mathrm{naive}}_{(L_i)_{i\in I}})=[X\actsfromright G/H]^{\mathrm{naive}},
\]
generally, and
in the setting of Remark \ref{rem.withlinebundles},
\[
\eta_I([U\actsfromright G]_{(L_i)_{i\in I}})=[U\actsfromright G/H].
\]
\end{coro}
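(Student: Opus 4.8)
The plan is to obtain the corollary as the special case $Z_G(H)=G$ of Proposition~\ref{prop.etaI}. The first step is to unwind what centrality of $H$ contributes: if $H$ is central then $Z_G(H)=G$, so $Z=Z_G(H)/H=G/H$. The second step is to make sure the right-hand sides $[X\actsfromright G/H]$, $[X\actsfromright G/H]^{\mathrm{naive}}$, and $[U\actsfromright G/H]$ are meaningful. Here I would invoke that $H$, being the generic stabilizer of a component of $X$ (resp.\ of $U$), acts trivially on that component, hence --- because $G$ is assumed to act transitively on irreducible components --- on all of $X$ (resp.\ $U$); therefore the $G$-action descends to a $G/H$-action, and this action is generically free because $H$ was the \emph{full} generic stabilizer. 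That $X$ (resp.\ the relevant orbit) with its $Z=G/H$-action satisfies Assumption~2 is part of Lemma~\ref{lem.bundles}(i), which is in force throughout Definition~\ref{def.naivewithLi} and Remark~\ref{rem.withlinebundles}, so all the classes on the right are defined.

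The third step is the identification of orbits. In Proposition~\ref{prop.etaI} the symbol $Y$ denotes the $Z_G(H)$-orbit of a component of $X$; with $Z_G(H)=G$ and transitivity on components this is all of $X$, and likewise the orbit $V$ of a component of $U$ in the setting of Remark~\ref{rem.withlinebundles} equals $U$. Substituting $Z=G/H$, $Y=X$, $V=U$ into the three identities of Proposition~\ref{prop.etaI} produces exactly the three identities of the corollary. The only remaining point is that the identification $Z\cong G/H$ is compatible with the group actions: both the $Z$-action on $Y=X$ coming from Proposition~\ref{prop.etaI} and the $G/H$-action on $X$ appearing in the statement are the action induced from the given $G$-action on $X$ by passing to the quotient by its kernel $H$, so they coincide on the nose, and the same applies to $V=U$.

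I do not anticipate a genuine obstacle here: the content is entirely in Proposition~\ref{prop.etaI}, and what remains is the bookkeeping of the identifications $Z_G(H)=G$, $Z=G/H$, $Y=X$, $V=U$ under the centrality hypothesis, together with the elementary observation that a finite group acting trivially on a dense open subset of a reduced separated scheme acts trivially. The only thing to be mildly careful about is verifying that the $G/H$-quotient action is generically free and satisfies Assumption~2, so that every symbol in the statement is legitimate; both facts are already available from the hypotheses of Definition~\ref{def.naivewithLi} via Lemma~\ref{lem.bundles}(i).
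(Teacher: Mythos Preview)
Your proof is correct and matches the paper's approach: the corollary is stated without proof, as an immediate consequence of Proposition~\ref{prop.etaI} under the identifications $Z_G(H)=G$, $Z=G/H$, $Y=X$, and $V=U$. The additional care you take in verifying that the $G/H$-action is well-defined, generically free, and satisfies Assumption~2 is appropriate but not spelled out in the paper.
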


\begin{conv}
\label{conv.chiIidentity}
The classes $[X\actsfromright G]_{(L_i)_{i\in I}}$ and $[X\actsfromright G]^{\mathrm{naive}}_{(L_i)_{i\in I}}$ (Definition \ref{def.naivewithLi}), respectively
$[U\actsfromright G]_{(L_i)_{i\in I}}$ (Remark \ref{rem.withlinebundles}),
also make sense without the assumption of transitivity of the $G$-action on connected components of $X$, respectively $U$.
The assumptions concerning Lemma \ref{lem.bundles} need to be imposed on each orbit of components.
Each orbit of components leads to a class, and these are added to yield
$[X\actsfromright G]_{(L_i)_{i\in I}}$,
respectively $[X\actsfromright G]^{\mathrm{naive}}_{(L_i)_{i\in I}}$,
respectively $[U\actsfromright G]_{(L_i)_{i\in I}}$.
Furthermore, we permit the line bundles $L_i$ to be defined on smooth quasi-projective schemes with $G$-action, containing $X$, respectively $U$ as an invariant subscheme; the restriction to $X$, respectively $U$ is then implicit in the notation.
\end{conv}

\begin{exam}
\label{exa.withchiI}
With the notation of Example \ref{exa.chicontinued},
\[
\chi_I(X\actsfromright G,(D_i)_{i\in \cI})=
[D^\circ_I\actsfromright G]^{\mathrm{naive}}_{(\mathcal{N}_{D_i/X})_{i\in I}}.
\]
Furthermore, if each $D_{I,j}$ is the orbit under $G$ of a single component $\underline{D}_{I,j}$, then
letting $G_{I,j}$ denote the maximal subgroup of $G$ for which $\underline{D}_{I,j}$ is invariant,
\[
\chi_I(X\actsfromright G,(D_i)_{i\in \cI})=\sum_{j\in \cJ_I}
\mathrm{ind}_{G_{I,j}}^G\big([\underline{D}^\circ_{I,j}\actsfromright G_{I,j}]^{\mathrm{naive}}_{(\mathcal{N}_{D_i/X})_{i\in I}}\big),
\]
where on the right-hand side we employ notation analogous to \eqref{eqn.alsointroduce}.
\end{exam}

\section{De Concini-Procesi models for linear actions}
\label{sect:models}
Let $V$ be a finite-dimensional vector space over $k$.
In this section, we lay the groundwork for computing the class
$$
[V\actsfromright G]^{\mathrm{naive}} \in \Burn_n(G),
$$
of a given linear action of $G$ on $V$,
using as our main tool the \emph{wonderful compactifications} of De Concini and Procesi \cite{DP}.

Here, $V$ has the right action of $G$ determined by a faithful linear representation
\[ G\to \GL(V^\vee). \]
We follow the
presentation in \cite{FKabelianizing}.

The space $V$ contains an arrangement of linear subspaces,
called the \emph{stabilizer stratification} in \cite{Borisov-Gunnells},
indexed by the stabilizer groups $\Gamma\subseteq G$ of vectors in $V$.
We denote by $\mathcal L=\mathcal L(V)$ the set of these stabilizer groups, partially ordered by inclusion. 
Associated with $\Gamma\in \mathcal L$ is the linear subspace
\[
V_{\Gamma}:=\{v\in V\,|\,v\cdot g=v \text{ for all $g\in \Gamma$}\}.
\]

\begin{rema}
\label{rem.sublattice}
The poset $\cL$ is not, generally, a sublattice of the lattice of all subgroups of $G$.
For instance, for the tensor product of the standard and sign representations of $\fS_4$ we have
$\langle(1,2)\rangle$, $\langle(1,2,3)\rangle\in\cL$ but
$\fS_3\notin \cL$.
Generally, however, $\cL$ is a lattice, with
$\Gamma\wedge \Gamma'=\Gamma\cap \Gamma'\in \cL$ for
$\Gamma$, $\Gamma'\in \cL$, but
more intricate
join operation: $\Gamma\vee \Gamma'$ is characterized as the generic stabilizer of
$V_{\Gamma}\cap V_{\Gamma'}$.
\end{rema}

The De Concini-Procesi model 
$V_{\mathcal L}$ is the closure of the image of 
$$
V^\circ\to V\times \prod_{\{1\}\ne\Gamma\in \mathcal L} \bP(V/V_{\Gamma}),
$$
where 
$$
V^\circ:=V\setminus \bigcup_{\{1\}\ne\Gamma\in \mathcal L} V_{\Gamma}.
$$
There is a natural projection
$$
V_{\mathcal L}\to V
$$
which is an isomorphism on $V^\circ$. 
The complement in $V_{\mathcal L}$ is 
a normal crossing divisor 
\begin{equation}
\label{eqn.D}
D=\bigcup_{\{1\}\ne \Gamma\in \mathcal L} D_{\Gamma}.
\end{equation}
All strata are enumerated in terms of the combinatorics of the lattice $\mathcal L$: for $\Lambda\subset \mathcal L\setminus\{1\}$, the stratum
$$
D_{\Lambda}:=\bigcap_{\Gamma\in \Lambda} D_{\Gamma}
$$
is nonempty if and only if $\Lambda$ is a chain in $\mathcal L$; we have
$D_\emptyset:=V_{\mathcal L}$ by convention.
The geometry of the strata 
is described in
\cite[\S 4.3]{DP}.

\begin{prop}
\label{prop:dp}
The variety $V_{\mathcal L}$ is in standard form with respect to the divisor $D$ in \eqref{eqn.D}: \begin{itemize}
\item the action on $V_{\mathcal L}\setminus D$ is free,
\item for $g\in G$ and $\{1\}\ne \Gamma\in \mathcal L$ either $g(D_{\Gamma})= D_{\Gamma}$ or $g(D_{\Gamma})\cap D_{\Gamma}=\emptyset$. 
\end{itemize}
Consequently, all stabilizers are abelian, and Assumption 2 is satisfied.
\end{prop}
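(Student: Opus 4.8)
The plan is to reduce the statement to known geometric facts about De Concini--Procesi wonderful models, and then observe that the stated properties force all stabilizers to be abelian and Assumption~2 to hold. First I would recall the structure of $V_{\mathcal L}$: by construction $V_{\mathcal L}\setminus D$ maps isomorphically onto $V^\circ=V\setminus\bigcup_{\{1\}\ne\Gamma\in\mathcal L}V_\Gamma$, which is precisely the locus where $G$ acts freely (a point with nontrivial stabilizer $\Gamma$ lies in $V_\Gamma$ with $\Gamma\ne\{1\}$). This gives the first bullet. For the second bullet, I would use that the whole construction is $G$-equivariant: $G$ permutes the subspaces $V_\Gamma$ via $g\colon V_\Gamma\to V_{g\Gamma g^{-1}}$, hence permutes the divisors $D_\Gamma$ by $g(D_\Gamma)=D_{g\Gamma g^{-1}}$. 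Since $g\Gamma g^{-1}\in\mathcal L$ again, either $g\Gamma g^{-1}=\Gamma$, in which case $g(D_\Gamma)=D_\Gamma$, or $g\Gamma g^{-1}\ne\Gamma$, in which case $D_{g\Gamma g^{-1}}$ and $D_\Gamma$ are distinct components of the normal crossing divisor; but distinct $D_{\Gamma'}$, $D_\Gamma$ can still intersect (when $\{\Gamma,\Gamma'\}$ is a chain), so the disjointness assertion requires a genuine argument rather than mere distinctness.

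The key point, therefore, is: if $g\Gamma g^{-1}\ne\Gamma$ then $g(D_\Gamma)\cap D_\Gamma=\varnothing$. I would argue via the explicit description of the strata in \cite[\S4.3]{DP}: $D_\Gamma$ is a divisor ``created by blowing up along (the proper transform of) $V_\Gamma$'', and points of $D_\Gamma$ remember, in the $\bP(V/V_\Gamma)$-factor, a normal direction to $V_\Gamma$; more precisely the image of $D_\Gamma$ under $V_{\mathcal L}\to V$ is contained in $V_\Gamma$, and a point of $D_\Gamma\cap D_{\Gamma'}$ maps into $V_\Gamma\cap V_{\Gamma'}$. If $x\in g(D_\Gamma)\cap D_\Gamma$, then $x\in D_\Gamma$ and $g^{-1}x\in D_\Gamma$, so the image of $x$ in $V$ lies in $V_\Gamma$ and $g^{-1}$ applied to that image lies in $V_\Gamma$, i.e.\ the image lies in $V_\Gamma\cap g(V_\Gamma)=V_\Gamma\cap V_{g\Gamma g^{-1}}$. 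This is not yet a contradiction — it says the images meet — so I need to track the $\bP(V/V_\Gamma)$-coordinate as well: the normal slice data at $x$ is fixed by (a conjugate of) $\Gamma$ and simultaneously, via the $g$-translate, the incompatible group $g\Gamma g^{-1}$, and since $\mathcal L$ is exactly the set of subspace-stabilizers, one checks that the generic stabilizer along the stratum containing $x$ would have to contain both $\Gamma$ and $g\Gamma g^{-1}$, while the De Concini--Procesi construction is arranged (the ``building set'' is the full lattice $\mathcal L$, which is closed under the operations in Remark~\ref{rem.sublattice}) precisely so that the stabilizer of a point of $D_\Gamma$ is conjugate into the normalizer data attached to $\Gamma$ alone. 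The cleanest route is probably to invoke the known fact that for a wonderful model associated to a $G$-invariant building set, the $G$-action is ``in standard form'' in the sense of the equivariant MMP literature (this is essentially \cite[\S4.3]{DP} combined with the equivariance of the building set $\mathcal L$); I would cite this and spell out only the translation to the two bullet points.

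Finally, the ``Consequently'' clause: once we know the action is in standard form with respect to $D$, the stabilizer of any point $x\in V_{\mathcal L}$ is the stabilizer of the stratum $D_\Lambda^\circ$ containing it, where $\Lambda$ is a chain $\Gamma_1\subsetneq\cdots\subsetneq\Gamma_r$ in $\mathcal L$; by the second bullet this stabilizer fixes each $D_{\Gamma_i}$ setwise, and acting on the normal directions it is identified with a subgroup of $\prod$(linear characters), hence it embeds into the stabilizer of the generic point of the stratum, which by the local product description in \cite[\S4.3]{DP} is abelian (it is contained in $\Gamma_r$ up to the relevant identification, and each $\Gamma_i\in\mathcal L$ is itself a subspace-stabilizer; but in fact it is the abelian group acting by scalars on the successive normal bundles of the blow-ups). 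I would then note that Assumption~2 follows: all stabilizers being abelian is the first half; for the surjectivity condition onto $H^\vee$, the characters coming from the $G$-linearized line bundles $\mathcal O(D_{\Gamma_i})$ (equivalently, the normal bundles of the exceptional divisors) restrict on a component $F$ of $X^H$ to a generating set of $H^\vee$ by the very description of $H$ as acting diagonally by those characters, and these classes lie in the image of $\Pic^G(V_{\mathcal L})$. The main obstacle I anticipate is making the second bullet rigorous without reproducing all of \cite[\S4.3]{DP}: the honest content is that the building set $\mathcal L$ is $G$-stable and closed under the join of Remark~\ref{rem.sublattice}, which is exactly what makes overlapping conjugate strata impossible, and I would isolate that as the one lemma requiring care.
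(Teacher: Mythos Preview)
Your overall structure is right, and you correctly reduce the second bullet to the claim that $D_\Gamma\cap D_{g\Gamma g^{-1}}=\varnothing$ whenever $g\Gamma g^{-1}\ne\Gamma$. You even record the relevant fact: distinct divisors $D_\Gamma$, $D_{\Gamma'}$ meet exactly when $\{\Gamma,\Gamma'\}$ is a chain in $\mathcal L$. But you then overlook the one-line finish and instead launch into an incomplete argument about images in $V$, normal directions, and stabilizers, ending with an appeal to \cite[\S4.3]{DP} that you yourself flag as the ``main obstacle''.

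The missing observation is elementary: $g\Gamma g^{-1}$ has the same order as $\Gamma$. Two \emph{distinct} subgroups of the same order can never be comparable under inclusion, so $\{\Gamma,g\Gamma g^{-1}\}$ is not a chain, hence $D_\Gamma\cap D_{g\Gamma g^{-1}}=\varnothing$. That is the whole content of the paper's proof of the second bullet. Your attempted route through ``the generic stabilizer along the stratum containing $x$ would have to contain both $\Gamma$ and $g\Gamma g^{-1}$'' is not correct as stated (lying on $D_\Gamma$ does not force the stabilizer to contain $\Gamma$) and in any case is unnecessary.

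For the ``Consequently'' clause, the paper treats this as an immediate consequence of being in standard form, relying on the general theory developed in \cite{BnG}; your sketch of why standard form forces abelian stabilizers is along the right lines but is more than is needed here.
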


\begin{proof}
The first condition holds, by definition.
The second condition holds, since a subset of $\mathcal L$, consisting of two subgroups of the same order, is never a chain.
\end{proof}

Following \cite{FKabelianizing}, we describe a point
$p\in V_{\mathcal L}$ as a pair
\begin{equation}
\label{eqn.pointVL}
p=(x,V_1\subset W_1\subset \dots\subset V_t\subset W_t)
\end{equation}
consisting of a vector $x\in V$ and
a flag
of subspaces of $V$,
such that
\begin{itemize}
\item $V_i=V_{\Gamma^i}$ for some $\{1\}\ne \Gamma^i\in \mathcal L$, for all $i$.
\item We have $\dim(W_i)=\dim(V_i)+1$ for all $i$. (But, for $i\ge 2$, we only require $\dim(W_{i-1})\le \dim(V_i)$.)
\item The stabilizer of $x$ is $\Gamma^1$
when $t$ is positive, trivial when $t=0$.
\item The generic stabilizer of $W_i$ is $\Gamma^{i+1}$ for $i<t$.
\item The generic stabilizer of $W_t$ is trivial when $t\ge 1$.
\end{itemize}
The point $p$ lies in the stratum $D_{\Lambda}$, indexed by the chain 
\begin{equation}
\label{eqn.chainsubgroups}
\Lambda:= \Gamma^1\supset \Gamma^2\supset\dots\supset \Gamma^t,
\end{equation}
and not in any deeper stratum.
The stabilizer of $p$ is
\[ \{g\in \Gamma^1\,|\,V_i\cdot g=V_i\text{ and }W_i\cdot g=W_i\text{ for all }i\}; \]
we remark, for $g\in G$, that $V_i\cdot g=V_i$
if and only if $g\in N_G(\Gamma^i)$.

Concretely, the stabilizer is the group
$\Delta^t$ obtained by the following recursive construction, for which we introduce $V_{t+1}:=V$ and $\Gamma^{t+1}:=\{1\}$.
Start with $N_{\Gamma^1}(\Gamma^2)\to \mathrm{GL}((V_2/V_1)^\vee)$ with kernel $\Gamma^2$ and define $\Delta^1\subseteq N_{\Gamma^1}(\Gamma^2)$ to be the subgroup leaving $W_1/V_1$ invariant;
then 
\[
N_{\Delta^1}(\Gamma^3)\to \mathrm{GL}((W_1/V_1)^\vee)\times\mathrm{GL}((V_3/V_2)^\vee)
\]
has kernel $\Gamma^3$, and we define 
$\Delta^2\subseteq N_{\Delta^1}(\Gamma^3)$ to be the subgroup leaving $W_2/V_2$ invariant; and so on.
We get a faithful representation 
\[
(\epsilon^1,\dots,\epsilon^t)\colon
\Delta^t\to\mathrm{GL}((W_1/V_1)^\vee)\times\dots\times \mathrm{GL}((W_t/V_t)^\vee).
\]
The character of $\Delta^t$ determined by $D_{\Gamma^1}$ is $\epsilon^1$,
by $D_{\Gamma^i}$ for $i\ge 2$ is $\epsilon^i-\epsilon^{i-1}$.
(To see this, we form a basis of $V$ by successively extending bases of the vector spaces in \eqref{eqn.pointVL},
notice that the dual basis is adapted with apparent marking \cite[\S 1.3]{DP}, obtain a quasi-affine neighborhood of $p$ \cite[\S 3.1]{DP}, and find the product of local defining equations of the first $i$ divisors as coordinate on $V$, corresponding to the $i$th marking \cite[(1.4.3)]{DP}.)

There is an analogous description of the generic stabilizer of $D_\Lambda$, where we replace passage to the subgroup $\Delta^i$ leaving a one-dimensional subspace of $V_{i+1}/V_i$ invariant with passage to the subgroup $\Delta_\Lambda^i$ acting by scalar multiplication on $(V_{i+1}/V_i)^\vee$:
\[
(\epsilon^1_\Lambda,\dots,\epsilon^t_\Lambda)\colon \Delta_\Lambda^t\to (k^\times)^t\subset \GL((V_2/V_1)^\vee)\times\dots\times \GL((V/V_t)^\vee).
\]

In order to apply the machinery of Section \ref{sect:strati}, we introduce $G$-invariant divisors, made up of unions over conjugacy classes of divisors $D_\Gamma$.
Let
\[
\Gamma_1,\dots,\Gamma_\ell\in \cL\setminus\{1\} 
\]
be a choice of conjugacy class representatives of nontrivial stabilizer groups of vectors in $V$.
Then, with
\[
D_i:=\bigcup_{\substack{\text{$\Gamma$ conjugate}\\\text{to $\Gamma_i$}}}D_{\Gamma},
\]
we have a simple normal crossing divisor
$D=D_1\cup\dots\cup D_\ell$, with each $D_i$ invariant under $G$.
Now suppose $I\subseteq \cI:=\{1,\dots,\ell\}$, with
$D_I\ne \emptyset$.
Then the orders of $\Gamma_i$ must be pairwise distinct; we write
$I=\{i_1,\dots,i_t\}$ with
\[ |\Gamma_{i_1}|>|\Gamma_{i_2}|>\dots>|\Gamma_{i_t}|. \]
We take $\cJ_I$ to be the set of conjugacy classes of chains of subgroups \eqref{eqn.chainsubgroups} with
$\Gamma^a$ in the conjugacy class of $\Gamma_{i_a}$
for $a=1$, $\dots$, $t$.
For $j\in \cJ_I$, conjugacy class of a chain \eqref{eqn.chainsubgroups}, we $D_{I,j}$, orbit under $G$ of $D_\Lambda$, with generic stabilizer $H_{I,j}=\Delta^t_\Lambda$,
with $\Z^I$-pairing given by
$\epsilon^1_\Lambda$, $\epsilon^2_\Lambda-\epsilon^1_\Lambda$, $\dots$.

There is also a {\em projective} version, fitting into a cartesian diagram \cite[(4.1.1)-(4.1.3)]{DP}:
\begin{equation}
\begin{split}
\label{eqn.projectivesquare}
\xymatrix{
V_{\mathcal L} \ar[r] \ar[d]& \ar[d]
\cO_{\bP(V)}(-1)\times \prod_{\{1\}\ne\Gamma\in \mathcal L} \bP(V/V_{\Gamma}) \\
\bP(V)_{\mathcal L} \ar[r]& 
 \bP(V)\times \prod_{\{1\}\ne\Gamma\in \mathcal L} \bP(V/V_{\Gamma}).
}
\end{split}
\end{equation}
Here, $\bP(V)_{\mathcal L}$ is the corresponding projective De Concini-Procesi model \cite[\S 4.1]{DP}, isomorphic, when $V$ has trivial space of invariants, to $D_G$.

\begin{rema}
\label{rem.differentprojective}
This construction may yield, for a single $G$-action on $\bP(V)$, {\em different} models $\bP(V)_\cL$.
The reason is that different linear representations can have the same projectivization.
For instance, the
standard representation of $\fS_4$
leads to $\bP(V)_\cL$, isomorphic to the blow-up of $\bP^2$ at $7$ points, while the tensor product of the standard and sign representations leads to a different lattice $\cL$ (cf.\ Remark \ref{rem.sublattice}), for which the corresponding $\bP(V)_\cL$ is isomorphic to the blow-up of $\bP^2$ at $13$ points.
\end{rema}

\section{De Concini-Procesi models for projective linear actions}
\label{sect:projective}
A faithful representation $G\to \GL(V^\vee)$,
having (possibly trivial) cyclic subgroup $C$ acting by scalar matrices, induces a faithful projective representation $G/C\to \PGL(V^\vee)$.
This leads to an arrangement of projective subspaces of $\bP(V)$
indexed by pairs
\[ (\Gamma,\epsilon),\qquad C\subseteq\Gamma\subseteq G,\qquad \epsilon\in \Hom(\Gamma,k^\times), \]
where $\Gamma$ is the stabilizer group of some one-dimensional subspace $\ell\subset V$ with $\Gamma\to \GL(\ell^\vee)\cong k^\times$ given by $\epsilon$.
This is the projective analogue of the stabilizer stratification:
$\Gamma/C$ is the stabilizer group of $\bP(\ell)\in \bP(V)$.

The set
\[
\bar{\cL}=\bar{\cL}(V):=
\{\text{pairs $(\Gamma,\epsilon)$ as above}\}\cup \{\infty\}
\]
is a poset, with relation $<$ defined by containment of subgroups with compatibility of characters, and
maximal element $\infty$.
Associated with $(\Gamma,\epsilon)\in \bar{\cL}$ is the linear subspace
\[ V_{\Gamma,\epsilon}:=\{v\in V\,|\,v\cdot g=\epsilon(g)v\text{ for all }g\in \Gamma\}, \]
with
\[ V_\infty:=0. \]
We get the structure of lattice, with $(\Gamma,\epsilon)\wedge(\Gamma',\epsilon')$ equal to the subgroup of $\Gamma\cap \Gamma'$ defined by equality of $\epsilon$ and $\epsilon'$, paired with the common restriction of $\epsilon$ and $\epsilon'$, and as in Remark \ref{rem.sublattice} a more intricate join operation, where
$(\Gamma,\epsilon)\vee(\Gamma',\epsilon')$ has associated linear subspace
$V_{\Gamma,\epsilon}\cap V_{\Gamma',\epsilon'}$.

The representation $G\to \GL(V)$ determines a projective De Concini-Procesi model $\bP(V)_{\bar{\cL}}$, the closure of the image of
\[ \bP(V)^\circ\to \bP(V)\times
\prod_{\substack{(\Gamma,\epsilon)\in \bar{\cL}\\ \Gamma\ne C}}\bP(V/V_{\Gamma,\epsilon}), \]
where $\bP(V)^\circ$ denotes the complement in $\bP(V)$ of the union of all proper subspaces $\bP(V_{\Gamma,\epsilon})$.
There is a natural projection
\begin{equation}
\label{eqn.projectionPV}
\bP(V)_{\bar{\cL}}\to \bP(V)
\end{equation}
which is an isomorphism on $\bP(V)^\circ$.
The complement in $\bP(V)_{\bar{\cL}}$ is a normal crossing divisor
\begin{equation}
\label{eqn.projectiveD}
D=\bigcup_{\substack{(\Gamma,\epsilon)\in \bar{\cL}\\ \Gamma\ne C}} D_{\Gamma,\epsilon}.
\end{equation}

\begin{lemm}
\label{lem.divisors}
Let $(\Gamma,\epsilon)\in \bar{\cL}$, with
$\Gamma\ne C$.
The projection \eqref{eqn.projectionPV}
factors through the blow-up
\[ B\ell_{\bP(V_{\Gamma,\epsilon})}\bP(V)\subset \bP(V)\times \bP(V/V_{\Gamma,\epsilon}) \]
of $\bP(V)$ along $\bP(V_{\Gamma,\epsilon})$.
The pre-image of $\bP(V_{\Gamma,\epsilon})$ is the
exceptional divisor in $B\ell_{\bP(V_{\Gamma,\epsilon})}\bP(V)$,
with divisor class
\[ \cO_{\bP(V)}(1)\otimes\cO_{\bP(V/V_{\Gamma,\epsilon})}(-1), \]
and the further pre-image in $\bP(V)_{\bar{\cL}}$ is the divisor
\[
\bigcup_{(\Gamma',\epsilon')\ge(\Gamma,\epsilon)}D_{\Gamma',\epsilon'},
\]
with all components of multiplicity $1$.
\end{lemm}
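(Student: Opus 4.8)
The plan is to work locally on $\bP(V)$ using the explicit factorization of the De Concini–Procesi projection through partial products. First I would recall that, by construction, $\bP(V)_{\bar\cL}$ is the closure of $\bP(V)^\circ$ inside $\bP(V)\times\prod\bP(V/V_{\Gamma',\epsilon'})$, and that the natural map to the single factor $\bP(V)\times\bP(V/V_{\Gamma,\epsilon})$ has image contained in the graph-type locus cut out by the incidence/Segre-type equations saying that a point of $\bP(V)$ not on $\bP(V_{\Gamma,\epsilon})$ maps to its image under linear projection away from $V_{\Gamma,\epsilon}$; the closure of that locus is precisely the blow-up $B\ell_{\bP(V_{\Gamma,\epsilon})}\bP(V)\subset\bP(V)\times\bP(V/V_{\Gamma,\epsilon})$. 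This is the standard description of a blow-up of projective space along a linear subspace, so the factorization $\bP(V)_{\bar\cL}\to B\ell_{\bP(V_{\Gamma,\epsilon})}\bP(V)\to\bP(V)$ is immediate from the universal property of closure together with the fact that $\bP(V)^\circ$ lies in the preimage of $B\ell_{\bP(V_{\Gamma,\epsilon})}\bP(V)$.

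Next I would identify the exceptional divisor and its class. The exceptional divisor $E$ of $B\ell_{\bP(V_{\Gamma,\epsilon})}\bP(V)$ is the $\bP(V/V_{\Gamma,\epsilon})$-factor restricted over $\bP(V_{\Gamma,\epsilon})$, and its class is well known to be $\cO_{\bP(V)}(1)\otimes\cO_{\bP(V/V_{\Gamma,\epsilon})}(-1)$: this is the relative $\cO(1)$ of the $\bP^1$-bundle structure along $E$ over $\bP(V_{\Gamma,\epsilon})\times\bP(V/V_{\Gamma,\epsilon})$, equivalently it follows from the excess intersection / Euler sequence computation for blow-ups of linear subspaces, or can be checked on the standard affine charts by writing local defining equations. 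I would then compute the total transform of $\bP(V_{\Gamma,\epsilon})$ in $\bP(V)_{\bar\cL}$ by pulling back along $\bP(V)_{\bar\cL}\to B\ell_{\bP(V_{\Gamma,\epsilon})}\bP(V)$: since $\bP(V)_{\bar\cL}$ is itself obtained by further iterated wonderful blow-ups, the preimage of $\bP(V_{\Gamma,\epsilon})$ consists exactly of those divisors $D_{\Gamma',\epsilon'}$ whose associated subspace $V_{\Gamma',\epsilon'}$ is contained in $V_{\Gamma,\epsilon}$, i.e.\ those with $(\Gamma',\epsilon')\ge(\Gamma,\epsilon)$ in $\bar\cL$. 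The multiplicity-one statement is the De Concini–Procesi normal-crossing property: in a wonderful model, every exceptional divisor appears reduced, which I would invoke from \cite[\S4.1]{DP} (or re-derive using the marked-basis local coordinates already recalled in Section~\ref{sect:models}, where the product of local defining equations of the relevant divisors gives a coordinate corresponding to the marking).

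The main obstacle I anticipate is the bookkeeping that the \emph{set} of divisors $D_{\Gamma',\epsilon'}$ lying over $\bP(V_{\Gamma,\epsilon})$ is exactly $\{(\Gamma',\epsilon')\ge(\Gamma,\epsilon)\}$ and nothing more: one inclusion is clear (if $V_{\Gamma',\epsilon'}\subseteq V_{\Gamma,\epsilon}$ then $D_{\Gamma',\epsilon'}$ maps into $\bP(V_{\Gamma,\epsilon})$ hence into its preimage), but for the reverse I need that a point of $\bP(V)_{\bar\cL}$ whose image in $\bP(V)$ lies on $\bP(V_{\Gamma,\epsilon})$ must lie on some $D_{\Gamma',\epsilon'}$ with $(\Gamma',\epsilon')\ge(\Gamma,\epsilon)$, which follows from the stratification of $\bP(V)_{\bar\cL}$ by chains in $\bar\cL$ (the analogue in the projective setting of \eqref{eqn.chainsubgroups}) together with the fact that $\bP(V_{\Gamma,\epsilon})$ is covered by the subspaces $\bP(V_{\Gamma',\epsilon'})$ with $(\Gamma',\epsilon')\ge(\Gamma,\epsilon)$ — indeed $(\Gamma,\epsilon)$ itself is such an element, and any point of $\bP(V)$ on $\bP(V_{\Gamma,\epsilon})$ lies on the proper subspace $\bP(V_{\Gamma,\epsilon})$, so is blown up at stage $(\Gamma,\epsilon)$ or earlier. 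I expect this to be a short argument once the chain-stratification is in place, and the class and multiplicity assertions to be essentially formal given \cite{DP}.
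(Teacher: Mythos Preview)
Your proposal is correct and follows essentially the same approach as the paper: the paper's proof is a terse two-sentence version that calls the blow-up description ``standard'' and cites \cite[Thm.~4.2]{DP} for the identification of the pre-image in $\bP(V)_{\bar\cL}$, and you have simply unpacked both of these ingredients in detail. Your discussion of the ``obstacle'' (identifying exactly which $D_{\Gamma',\epsilon'}$ lie over $\bP(V_{\Gamma,\epsilon})$, with multiplicity one) is precisely what is contained in the cited De Concini--Procesi result.
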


\begin{proof}
The description of $B\ell_{\bP(V_{\Gamma,\epsilon})}\bP(V)$ is standard.
The description of the pre-image in $\bP(V)_{\bar{\cL}}$ follows from
\cite[Thm.\ 4.2]{DP}.
\end{proof}

The strata are the intersections $D_\Lambda$, indexed by chains $\Lambda$ in $\bar{\cL}$, not containing the smallest or largest element,
with the empty chain corresponding to $D_\emptyset:=\bP(V)_{\bar{\cL}}$.

\begin{prop}
\label{prop:projectivedp}
The variety $\bP(V)_{\bar{\cL}}$ is in standard form with respect to the divisor $D$ in \eqref{eqn.projectiveD}. In particular, all stabilizers are abelian.
\end{prop}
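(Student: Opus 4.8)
The plan is to mirror the proof of Proposition~\ref{prop:dp} in the projective setting, verifying the two defining conditions for a variety in standard form and then deducing abelianness of stabilizers from the local description. First I would check that $G$ acts freely on $\bP(V)_{\bar{\cL}}\setminus D$: by construction, $\bP(V)^\circ$ is the complement of all proper $\bP(V_{\Gamma,\epsilon})$, and any point with nontrivial stabilizer $\Gamma/C$ lies on some $\bP(V_{\Gamma,\epsilon})$ with $\Gamma\ne C$, hence its preimage lies in $D$; conversely $\bP(V)_{\bar{\cL}}\setminus D$ maps isomorphically onto $\bP(V)^\circ$ via \eqref{eqn.projectionPV}. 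Second, I would verify that for $g\in G$ and $(\Gamma,\epsilon)\in\bar{\cL}$ with $\Gamma\ne C$, either $g(D_{\Gamma,\epsilon})=D_{\Gamma,\epsilon}$ or the two are disjoint. The key point is that $g$ sends $D_{\Gamma,\epsilon}$ to $D_{g\Gamma g^{-1},\epsilon^g}$, and two distinct elements of $\bar{\cL}$ that are conjugate must have associated subspaces $V_{\Gamma,\epsilon}$ and $V_{\Gamma',\epsilon'}$ of the same dimension; by Lemma~\ref{lem.divisors} the divisor $D_{\Gamma,\epsilon}$ is the proper transform of the exceptional divisor over $\bP(V_{\Gamma,\epsilon})$, and a chain containing two distinct elements with subspaces of equal dimension cannot occur (one subspace would have to strictly contain the other, forcing a strict inequality of dimensions). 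So $D_{\Gamma,\epsilon}\cap D_{\Gamma',\epsilon'}=D_{\{(\Gamma,\epsilon),(\Gamma',\epsilon')\}}$ is empty unless $(\Gamma,\epsilon)=(\Gamma',\epsilon')$.

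Once standard form is established, abelianness of stabilizers follows from the explicit recursive description of generic stabilizers of strata, which carries over verbatim from the linear case: a point $p\in\bP(V)_{\bar{\cL}}$ is encoded, as in \eqref{eqn.pointVL}, by a point of $\bP(V)$ together with a partial flag of subspaces $V_{\Gamma^i,\epsilon^i}$, and its stabilizer is built up by successive passage to subgroups leaving one-dimensional quotients invariant. At each stage, the relevant group embeds faithfully, via characters $\epsilon^i$, into a product of copies of $\GL$ of one-dimensional spaces, i.e.\ into $(k^\times)^t$; this is the projective analogue of the faithful representation $(\epsilon^1,\dots,\epsilon^t)$ displayed after \eqref{eqn.chainsubgroups}. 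Hence every stabilizer, generic or not, is abelian.

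I expect the main obstacle to be the bookkeeping around the passage from $V$ to $\bP(V)$ in the stabilizer computation: in the projective case the stratification is indexed by pairs $(\Gamma,\epsilon)$ rather than subgroups $\Gamma$, so one has to keep track of the scalar characters $\epsilon^i$ and check that the recursive construction of $\Delta^t_\Lambda$ (acting by scalars on the successive quotients $(V_{i+1}/V_i)^\vee$) goes through with the pairs, using the factorization through iterated blow-ups from Lemma~\ref{lem.divisors} to identify the local defining equations. The freeness and disjointness conditions are, by contrast, formal consequences of the construction and of Lemma~\ref{lem.divisors}, exactly as in the proof of Proposition~\ref{prop:dp}. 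The conclusion that Assumption~2 is satisfied then follows from Proposition~\ref{prop:dp}-style reasoning, since all stabilizers being abelian and the variety being in standard form is precisely what is needed.
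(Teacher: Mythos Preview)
Your proposal is correct and follows essentially the same outline as the paper's proof: verify freeness on $\bP(V)_{\bar{\cL}}\setminus D$ and the disjoint-or-equal property for translates of $D_{\Gamma,\epsilon}$, exactly as in Proposition~\ref{prop:dp}. The paper's proof is two sentences and simply points back to that argument; your version spells out the dimension reasoning (conjugate pairs $(\Gamma,\epsilon)$ give subspaces of equal dimension, hence cannot form a chain), which is the same idea.

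There is one minor divergence worth noting. For the abelianness of stabilizers you invoke the explicit recursive description of $\Delta^t$ and the faithful embedding into a torus; the paper instead treats abelianness as an automatic consequence of standard form (``In particular, all stabilizers are abelian''). This is legitimate: once the two standard-form conditions hold, the stabilizer $G_p$ of any point acts linearly on $T_pX$ preserving the SNC hyperplanes, acts by characters $\chi_i$ on the normal lines, and the intersection $\bigcap\ker\chi_i$ must act freely by affine transformations on a translate of $\bigcap H_i$ lying in the free locus, hence is trivial; so $G_p\hookrightarrow(k^\times)^r$. Your route via the recursive $\Delta^t$ is also correct, just more tied to the specific De Concini--Procesi geometry. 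Finally, your last remark about Assumption~2 is extra: Proposition~\ref{prop:projectivedp} does not assert it (unlike Proposition~\ref{prop:dp}), so you can drop that sentence.
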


\begin{proof}
The action on $\bP(V)_{\bar{\cL}}\setminus D$ is free, and as in the proof of Proposition \ref{prop:dp}, the translate under $g\in G$ of $D_{\Gamma,\epsilon}$ is equal to or disjoint from $D_{\Gamma,\epsilon}$.
\end{proof}

Our next goal is to describe a point $p\in \bP(V)_{\bar{\cL}}$ and the corresponding stabilizer, in terms of the lattice $\bar{\cL}$.
There is a diagram, analogous to \eqref{eqn.projectivesquare}, and an identification of $\bP(V)_{\bar{\cL}}$ with the divisor of $V_{\bar{\cL}}$ indexed by $\infty\in \bar{\cL}$.
Again following \cite{FKabelianizing}, we may describe such a point as in \eqref{eqn.pointVL}, where the condition to lie on the divisor isomorphic to $\bP(V)_{\bar{\cL}}$ translates into the vanishing of $x\in V$.
Then, automatically, the first vector space is $0$, and the second is a one-dimensional space $\ell$.
Omitting this, respectively separating this as $y:=\bP(\ell)\in \bP(V)$, we write
\[ p=(y,V_1\subset W_1\subset\dots\subset V_t\subset W_t), \]
with:
\begin{itemize}
\item $V_i=V_{\Gamma^i,\epsilon}$ for some $(\Gamma^i,\epsilon)\in \bar\cL$ with $\Gamma^i\ne C$, for all $i$,
\item $\dim(W_i)=\dim(V_i)+1$, for all $i$,
\item $(\Gamma^1,\epsilon)$ as stabilizer and character of $\ell$,
\item $(\Gamma^{i+1},\epsilon)$ as stabilizer and character of a generic one-dimensional subspace of $W_i$, for $i<t$,
\item $C$ as stabilizer of a generic one-dimensional subspace of $W_t$.
\end{itemize}

The point $p$ lies in the stratum $D_\Lambda$, indexed by the chain
\begin{equation}
\label{eqn.chainsubgroupswithcharacter}
\Lambda:=\Gamma^1\supset \dots\supset \Gamma^t\qquad\text{with common character $\epsilon$},
\end{equation}
and not in any deeper stratum.
As in Section \ref{sect:models}, we obtain the stabilizer of $p$ by a recursive construction, $V_{t+1}:=V$, $\Gamma^{t+1}:=C$, now starting with $$
N_{\Gamma^1}(\Gamma^2)\to \GL((V_2/V_1)^\vee)
$$ 
and subgroup $\Delta^1$, leaving $W_1/V_1$ invariant,
with $\Delta^2\subseteq N_{\Delta^1}(\Gamma^3)$ the subgroup leaving $W_2/V_2$ invariant, etc.
We obtain
\[ (\epsilon,\epsilon^1,\dots,\epsilon^t)\colon \Delta^t\to \GL(\ell^\vee)\times \GL((W_1/V_1)^\vee)\times\dots\times \GL((W_t/V_t)^\vee). \]
The stabilizer of $p$ is $\Delta^t/C$, and
the character of $D_{\Gamma^1,\epsilon}$ is $\epsilon^1-\epsilon$, of 
$D_{\Gamma^i,\epsilon}$ for $i\ge 2$ is $\epsilon^i-\epsilon^{i-1}$.

The generic stabilizer of $D_\Lambda$ is obtained as above,
replacing passage to the subgroup $\Delta^i$ by passage to
$\Delta^i_\Lambda$, acting by scalar multiplication on $(V_{i+1}/V_i)^\vee$.

Also in the context of projectivized representations, we introduce $G$-invariant divisors.
Let
\[ (\Gamma_1,\epsilon_1),\dots,(\Gamma_\ell,\epsilon_\ell)\in \bar{\cL} \]
be a choice of conjugacy class representatives of pairs consisting of a stabilizer group, strictly containing $C$, and corresponding character.
We set
\[
D_i:=\bigcup_{\substack{\text{$(\Gamma,\epsilon)$ conjugate}\\\text{to $(\Gamma_i,\epsilon_i)$}}}D_{\Gamma,\epsilon}
\]
Then $D=D_1\cup\dots\cup D_\ell$ is a simple normal crossing divisor, with each $D_i$ invariant under $G$.
Given $I\subseteq \cI:=\{1,\dots,\ell\}$, with $D_I\ne \emptyset$, we can write $I$ uniquely as $\{i_1,\dots,i_t\}$ with
\[ |\Gamma_{i_1}|>|\Gamma_{i_2}|>\dots>|\Gamma_{i_t}|. \]
We take $\cJ_I$ to be the set of conjugacy classes of chains of subgroups \eqref{eqn.chainsubgroupswithcharacter} with
$(\Gamma^a,\epsilon)$ in the conjugacy class of $(\Gamma_{i_a},\epsilon_{i_a})$
for $a=1$, $\dots$, $t$.
Then for $j\in \cJ_I$, we have $D_{I,j}$, orbit under $G$ of $D_\Lambda$ for a representative $\Lambda$ of the conjugacy class $j$, with generic stabilizer group $H_{I,j}=\Delta^t_\Lambda$,
with $\Z^I$-pairing given by
$\epsilon^1_\Lambda-\epsilon$, $\epsilon^2_\Lambda-\epsilon^1_\Lambda$, $\dots$.
The maximal subgroup $G_{I,j}$ of $G$, under which $D_\Lambda$ is invariant (cf.\ Example \ref{exa.withchiI}), is
the stabilizer of $\Lambda$ under the conjugation action of $G$, i.e., is
\[
N_G(\Lambda):=N_G(\Gamma^1,\epsilon)\cap N_G(\Gamma^2)\cap\dots\cap N_G(\Gamma^t),
\]
where $N_G(\Gamma^1,\epsilon)\subseteq N_G(\Gamma^1)$ denotes the subgroup that stabilizes $\epsilon$.

\section{Computing classes of linear representations}
\label{sect:comp-class}

Here, we apply the previous results to compute the classes
\begin{equation}
\label{eqn.VGnaive}
[V\actsfromright G]^{\mathrm{naive}}\in \Burn_n(G)
\end{equation}
and
\begin{equation}
\label{eqn.PV}
[\bP(V)\actsfromright G/C] \in \Burn_{n-1}(G/C).
\end{equation}
In fact, we first determine the class
\begin{equation}
\label{eqn.PV0}
[\bP(V)\actsfromright G]_{(\cO_{\bP(V)}(-1))}\in \Burn_{n,\{0\}}(G).
\end{equation}
Referring
to the diagram
\eqref{eqn.projectivesquare},
we see by Example \ref{exa.naiveLi} that \eqref{eqn.VGnaive} arises from the class
\eqref{eqn.PV0} by applying $\omega_{\{0\}}$ and $\psi_{\{0\}}$, and adding the results.
By Corollary \ref{cor.etaI}, we obtain \eqref{eqn.PV} as image of the class
\eqref{eqn.PV0} under $\eta_{\{0\}}$.

We work on the model $\bP(V)_{\bar{\cL}}$.
By Proposition \ref{prop:projectivedp} this is in standard form and, in particular, satisfies Assumption 2. Moreover, there is an explicit description of all strata: strata are labeled by chains $\Lambda$ in the lattice $\bar{\cL}$. The geometry of these strata is worked out in \cite[Section 4.3]{DP}: $D_{\Lambda}$ is 
isomorphic to a product of projective varieties, each obtained by the De Concini-Procesi projective subspace arrangement construction applied to some subquotient spaces of $V$.
However, this description involves unnecessary, for our purposes, combinatorial sophistication.
The relevant information for us is the equivariant birational type of the $G$-orbit of $D_{\Lambda}$.

The rough idea, behind the determination of the class \eqref{eqn.PV0}, may be found in Corollary \ref{cor.chi}, which predicts a formula whose first term is
\begin{equation}
\label{eqn.PV0first}
(C\subseteq C, G/C\actsfromleft k(\bP(V)),(),\epsilon),
\end{equation}
where $\epsilon\in C^\vee$ is given by the scalar action of $C$ on $V$.
But, Corollary \ref{cor.chi} requires knowledge of the classes of strata in the equivariant indexed Burnside group,
for the normal bundles of divisors.
The classes that arise recursively come from a different collection of bundles.
So the formula in Proposition \ref{prop:main-formula}, below, uses Proposition \ref{prop.chi}, instead.
Further treatment is then required, to obtain the naive classes of the \emph{locally closed} strata.
With a stabilizer-distinguishing stratification, this would be an easy matter, with each locally closed stratum contributing a single term.
However, the stratifications that we use,
generally, exhibit jumping of stabilizers on locally closed strata; see Example \ref{exa.S4affine} and the computation in Proposition \ref{prop.projectivizedS4inBurn}.
Instead, this emerges as the technical heart of Theorem \ref{theo:main-formula}, below.

We turn to the description of input from the stratum,
indexed by a chain \eqref{eqn.chainsubgroupswithcharacter}.
As before, we write $V_i$ for $V_{\Gamma^i,\epsilon}$,
$i=1$, $\dots$, $t$.
The stabilizer $N_G(\Lambda)$, described in Section \ref{sect:projective}, acts on each of the following projective spaces:
\[ \bP(V_1), \quad \bP(V_2/V_1), \quad, \dots,\quad \bP(V_t/V_{t-1}), \quad \bP(V/V_t). \]
The action on $\bP(V_1)$ arises from the faithful linear representation
\[ N_G(\Lambda)/\ker(\epsilon)\to \GL(V_1^\vee). \]
For $1\le i\le t$ we have
\[ \widetilde{\Delta}^i_\Lambda\subseteq N_G(\Lambda), \]
defined by the condition of acting by scalar multiplication
on $V_{i+1}/V_i$:
\[ \epsilon^i\colon \widetilde{\Delta}^i_\Lambda\to k^\times \subset \GL((V_{i+1}/V_i)^\vee). \]
Then the action on $\bP(V_{i+1}/V_i)$ arises from the faithful linear representation
\[
N_G(\Lambda)/\ker(\epsilon^i)\to \GL((V_{i+1}/V_i)^\vee).
\]

Put $n:=\dim(V)$ and $n_i:=\dim(V_i)$.
Recursively, we have classes in
\begin{gather*}
\Burn_{n_1,\{0\}}(N_G(\Lambda)/\ker(\epsilon),N_{\Gamma^1}(\Lambda)/\ker(\epsilon)),\quad
\text{respectively}\\
\Burn_{n_{i+1}-n_i,\{i\}}(N_G(\Lambda)/\ker(\epsilon^i),\widetilde{\Delta}^i_\Lambda/\ker(\epsilon^i)).
\end{gather*}
Using the operations of product \cite[\S 6]{KT-struct} and restriction \cite[\S 7]{KT-struct}, we combine and obtain a class in
\[
\Burn_{n,\{0,\dots,t\}}(N_G(\Lambda)),
\]
in fact, in the subgroup
\[
\Burn_{n,\{0,\dots,t\}}(N_G(\Lambda),\Delta^t_\Lambda).
\]
Concretely, $\Delta^t_\Lambda$ is the central subgroup of $N_G(\Lambda)$, which under the faithful representation on
\[ V_1^\vee\times (V_2/V_1)^\vee\times\dots\times (V/V_t)^\vee \]
acts by scalars on each factor.

\begin{conv}
\label{conv.minus1}
We write $(\cO(-1))$ for the following sequence of line bundles, indexed by $\{0,\dots,t\}$:
\[ \cO_{\bP(V_1)}(-1),
\cO_{\bP(V_1)}(1)\otimes \cO_{\bP(V_2/V_1)}(-1),
\cO_{\bP(V_2/V_1)}(1)\otimes \cO_{\bP(V_3/V_2)}(-1),\dots.
\]
The class in
$\Burn_{n,\{0,\dots,t\}}(N_G(\Lambda),\Delta^t_\Lambda)$ that we
naturally obtain above will record the characters of a different sequence of line bundles, those where we omit the twists by the various $\cO(1)$ bundles.
These may be transformed to the characters for $(\cO(-1))$,
as in Example \ref{exa.autoZI}; the choice of transformation is inspired by Lemma \ref{lem.divisors}.
\end{conv}

Recursively, we take as known the class
\begin{equation}
\label{eqn.takeknown}
[\bP(V_1)\times \bP(V_2/V_1)\times \dots\times \bP(V/V_t)\actsfromright N_G(\Lambda)]_{(\cO(-1))} 
\end{equation}
in 
$\Burn_{n,\{0,\dots,t\}}(N_G(\Lambda),\Delta^t_\Lambda)$.

\begin{lemm}
\label{lem.birationalreplacement}
We have
\[ [D_\Lambda\actsfromright N_G(\Lambda)]_{(\cO(-1))}=[\bP(V_1)\times \dots\times \bP(V/V_t)\actsfromright N_G(\Lambda)]_{(\cO(-1))}
\]
in
$\Burn_{n,\{0,\dots,t\}}(N_G(\Lambda),\Delta^t_\Lambda)$.
\end{lemm}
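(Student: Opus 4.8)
The plan is to establish the identity by exhibiting a $G$-equivariant birational morphism, compatible with the relevant line bundles, between (an open part of) the stratum $D_\Lambda$ and (an open part of) the product of projective spaces, and then to invoke Proposition \ref{prop.bir}. First I would recall, from \cite[\S 4.3]{DP} and the identification used in Section \ref{sect:projective}, that $D_\Lambda$ is isomorphic to a fibration whose base and fibers are themselves De Concini--Procesi models of the subquotient spaces $V_1$, $V_2/V_1$, \dots, $V/V_t$. In particular, forgetting the De Concini--Procesi blow-ups on each factor yields a $G$-equivariant birational morphism from $D_\Lambda$ to $\bP(V_1)\times\bP(V_2/V_1)\times\dots\times \bP(V/V_t)$; more precisely, the description \eqref{eqn.pointVL} of a point of $\bP(V)_{\bar\cL}$ lying in $D_\Lambda$ records exactly the data of a point in each factor model, and the map that reads off the underlying point $y=\bP(\ell)$ and the lines $W_i/V_i\subset V_{i+1}/V_i$ is the desired morphism.

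Next I would verify that this morphism is compatible, in the sense of Definition \ref{defn:compa}, with the sequence of line bundles. The point here is that $N_G(\Lambda)$ acts on $D_\Lambda$ through its action on the factor models, and the generic stabilizer $\Delta^t_\Lambda$ acts by scalars on each $(V_{i+1}/V_i)^\vee$; the characters that $D_\Lambda$ records for the sequence of bundles obtained above (before the twists in Convention \ref{conv.minus1}) agree with the characters recorded on the product of projective spaces, because both come from the same scalar actions on the subquotients. Here one must also use that the $\bZ^{\{0,\dots,t\}}$-pairing computed on the stratum, via $\epsilon^1_\Lambda$, $\epsilon^2_\Lambda-\epsilon^1_\Lambda$, \dots, matches the one arising on the product. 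Since the classes live in the subgroup $\Burn_{n,\{0,\dots,t\}}(N_G(\Lambda),\Delta^t_\Lambda)$ with the same first subgroup $\Delta^t_\Lambda$, and the birational morphism is an isomorphism over the dense open locus where the stabilizer is exactly $\Delta^t_\Lambda$, Proposition \ref{prop.bir} applies and yields the stated equality.

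The main obstacle I anticipate is bookkeeping rather than conceptual: one has to be careful that the morphism $D_\Lambda\to \prod \bP(V_{i+1}/V_i)$ really is a \emph{birational morphism} of smooth quasi-projective $G$-varieties (so that it factors as a composition of equivariant smooth blow-ups, as needed in the proof of Proposition \ref{prop.bir}), and that the identification of $D_\Lambda$ with a De Concini--Procesi model relative to the factor models is $N_G(\Lambda)$-equivariant. A secondary subtlety is matching the two presentations of the line bundles: the class \eqref{eqn.takeknown} is, by Convention \ref{conv.minus1}, expressed in the $(\cO(-1))$ normalization, while the bundles naturally appearing on $D_\Lambda$ from the De Concini--Procesi geometry (cf.\ Lemma \ref{lem.divisors}, which identifies the relevant divisor classes as $\cO_{\bP(V)}(1)\otimes\cO_{\bP(V/V_{\Gamma,\epsilon})}(-1)$ and their iterates) carry the $\cO(1)$-twists. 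One checks that the automorphism of $\bZ^{\{0,\dots,t\}}$ from Example \ref{exa.autoZI}, applied to pass to the $(\cO(-1))$ normalization, is the same on both sides, so that the transformed classes agree; this is the sense in which ``the choice of transformation is inspired by Lemma \ref{lem.divisors}.'' Once these compatibilities are in place, the proof is a direct application of Proposition \ref{prop.bir}.
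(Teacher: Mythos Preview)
Your proposal is correct and follows the same approach as the paper: both invoke Proposition \ref{prop.bir} (birational invariance) applied to the natural $N_G(\Lambda)$-equivariant birational morphism from $D_\Lambda$ to the product of projective spaces, with compatibility of the line bundles. The paper's proof is a single sentence pointing to Proposition \ref{prop.bir}, while you have spelled out the verification of its hypotheses in more detail; the underlying argument is the same.
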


\begin{proof}
This follows from the birational invariance of Proposition \ref{prop.bir}.
\end{proof}

\begin{prop}
\label{prop:main-formula}
The class
$$
[\bP(V)\actsfromright G]_{(\cO_{\bP(V)}(-1))}
\in \Burn_{n,\{0\}}(G),
$$
determined by $G\to \GL(V)$,
is
\begin{align}
\begin{split}
\label{eqn.main-formula}
(&C\subseteq C, G/C\actsfromleft k(\bP(V)),(),\epsilon)+{}\\
&\qquad \sum_{\emptyset\ne I\subseteq \cI}\sum_{[\Lambda]\in \cJ_I} \mathrm{ind}_{N_G(\Lambda)}^G\big(\psi_{\{0,\dots,t\},\{0\}}\big(
[D^\circ_\Lambda\actsfromright N_G(\Lambda)]^{\mathrm{naive}}_{(\cO(-1))}
\big)\big)
\end{split}
\end{align}
\end{prop}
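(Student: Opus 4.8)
The starting point is Corollary~\ref{cor.chi}, applied to the model $X=\bP(V)_{\bar\cL}$ with its simple normal crossing divisor $D=D_1\cup\dots\cup D_\ell$, but carried out in the indexed Burnside group $\Burn_{n,\{0\}}(G)$ with the single line bundle $\cO_{\bP(V)}(-1)$, using the extension of the stratification formalism to collections of line bundles recorded in Remark~\ref{rem.withlinebundles} and Proposition~\ref{prop.etaI}. Thus the plan is first to write
\[
[\bP(V)\actsfromright G]_{(\cO_{\bP(V)}(-1))}
=[U\actsfromright G]_{(\cO_{\bP(V)}(-1))}
+\sum_{\emptyset\ne J\subseteq \cI}(-1)^{|J|-1}\psi_J\big(\chi_J(\bP(V)_{\bar\cL}\actsfromright G,(D_i)_{i\in J})\big),
\]
where $U=\bP(V)^\circ$ is the complement of $D$ and everything is understood with the extra $\cO_{\bP(V)}(-1)$-decoration. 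Since the $G$-action on $U$ is free and $U$ is $G$-birational to $\bP(V)$ with its class of the linearized $\cO(-1)$, the $U$-term is exactly the $0$-dimensional symbol $(C\subseteq C,\,G/C\actsfromleft k(\bP(V)),\,(),\,\epsilon)$; here the first slot records that the generic stabilizer is $C$ acting by the scalar character $\epsilon$ via $\cO_{\bP(V)}(-1)$, and $k(\bP(V))$ is the $(G/C)$-Galois algebra over its own field of invariants.

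The second step is to rewrite the remaining sum over nonempty $J$. By Example~\ref{exa.withchiI} (in its line-bundle-decorated form), for each $J$,
\[
\chi_J(\bP(V)_{\bar\cL}\actsfromright G,(D_i)_{i\in J})
=\sum_{j\in \cJ_J}\mathrm{ind}_{G_{J,j}}^{G}\big([\underline D^\circ_{J,j}\actsfromright G_{J,j}]^{\mathrm{naive}}_{(\cO(-1))}\big),
\]
and by the analysis in Section~\ref{sect:projective} the index $j\in\cJ_J$ corresponds to the conjugacy class $[\Lambda]$ of a chain $\Lambda=(\Gamma^1\supset\dots\supset\Gamma^t)$ with common character $\epsilon$, with $G_{J,j}=N_G(\Lambda)$ and $\underline D_{J,j}$ the stratum $D_\Lambda$. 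To match indices, I reindex the double sum $\sum_{\emptyset\ne J}\sum_{j\in\cJ_J}$ as $\sum_{\emptyset\ne I\subseteq\cI}\sum_{[\Lambda]\in\cJ_I}$, noting that $|I|=t$ is the length of $\Lambda$; the sign $(-1)^{|J|-1}$ is absorbed, and one checks that $\psi_J$ composed with the passage to the indexed class $[D^\circ_\Lambda\actsfromright N_G(\Lambda)]^{\mathrm{naive}}_{(\cO(-1))}\in\Burn_{n,\{0,\dots,t\}}(N_G(\Lambda))$ becomes $\psi_{\{0,\dots,t\},\{0\}}$ after the $\cO(-1)$-index $0$ is retained; this is where Convention~\ref{conv.minus1} enters, transforming the characters of the "untwisted" sequence of line bundles recorded naturally on $D^\circ_\Lambda$ into the characters of $(\cO(-1))$, with the transformation dictated by Lemma~\ref{lem.divisors} (whose divisor-class computation $\cO_{\bP(V)}(1)\otimes\cO_{\bP(V/V_{\Gamma,\epsilon})}(-1)$ gives precisely the shape of the successive twists). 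Commutativity of $\psi$ with induction and with the product/restriction operations, together with functoriality in index sets from Section~\ref{sect:strati}, then yields the claimed formula.

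The main obstacle is bookkeeping rather than a deep point: one must verify that the decoration by $\cO_{\bP(V)}(-1)$ is correctly propagated through the stratification formula, i.e. that pulling $\cO_{\bP(V)}(-1)$ back along $\bP(V)_{\bar\cL}\to\bP(V)$ and restricting to $D^\circ_\Lambda$ gives, on the product $\bP(V_1)\times\bP(V_2/V_1)\times\dots\times\bP(V/V_t)$ of Lemma~\ref{lem.birationalreplacement}, a line bundle whose class agrees with the index-$0$ entry of the sequence $(\cO(-1))$ of Convention~\ref{conv.minus1} after the prescribed transformation. This rests on Lemma~\ref{lem.divisors} and on the iterated blow-up description of $\bP(V)_{\bar\cL}$: each successive divisor $D_{\Gamma^i,\epsilon}$ contributes the twist $\cO_{\bP(V_i/V_{i-1})}(1)\otimes\cO_{\bP(V_{i+1}/V_i)}(-1)$, and composing these identifies $\cO_{\bP(V)}(-1)|_{D^\circ_\Lambda}$ with $\cO_{\bP(V_1)}(-1)$ up to the twists by the $\cO(1)$-bundles attached to the later factors — exactly the discrepancy that Convention~\ref{conv.minus1} is designed to absorb, and exactly why we record $(\cO(-1))$ rather than the naive sequence. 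Once this compatibility is in place, the identity is a formal consequence of Corollary~\ref{cor.chi}, Example~\ref{exa.withchiI}, Lemma~\ref{lem.birationalreplacement}, and the functoriality and commutativity relations for $\psi_{I,J}$, $\omega_{I,J}$, $\mathrm{ind}$, and $\eta_I$ established earlier in the paper.
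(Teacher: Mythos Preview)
Your argument has a genuine gap, and it stems from starting with Corollary~\ref{cor.chi} rather than Proposition~\ref{prop.chi}. The paper itself warns about this just before the statement of the proposition: Corollary~\ref{cor.chi} is the ``rough idea'', but the formula actually proved uses Proposition~\ref{prop.chi}. Concretely, your displayed identity
\[
\chi_J\big(\bP(V)_{\bar\cL}\actsfromright G,(D_i)_{i\in J}\big)
=\sum_{j\in \cJ_J}\mathrm{ind}_{G_{J,j}}^{G}\big([\underline D^\circ_{J,j}\actsfromright G_{J,j}]^{\mathrm{naive}}_{(\cO(-1))}\big)
\]
is not what Example~\ref{exa.withchiI} says. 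In that example the ``$\circ$'' on $D^\circ_{I,j}$ means the complement in $D_I$ of all $D_i$ with $i\in\cI\setminus I$; when the ambient divisor collection is only $(D_i)_{i\in J}$, there is nothing to remove, so the right-hand side carries the \emph{closed} stratum classes $[\underline D_{J,j}\actsfromright G_{J,j}]^{\mathrm{naive}}$. You then still have the alternating sign $(-1)^{|J|-1}$ from Corollary~\ref{cor.chi}, and your assertion that it is ``absorbed'' is unsupported: to convert the signed sum over closed strata into an unsigned sum over open strata you would have to run the inclusion--exclusion of Lemma~\ref{lem.chi} in reverse, which amounts to re-deriving Proposition~\ref{prop.chi}.

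The fix is to use Proposition~\ref{prop.chi} from the outset, in its line-bundle-decorated form. There, $\chi_J$ is computed with the \emph{full} divisor collection $(D_i)_{i\in\cI}$, so Example~\ref{exa.withchiI} yields directly
\[
\chi_J\big(\bP(V)_{\bar\cL}\actsfromright G,(D_i)_{i\in\cI}\big)
=\sum_{[\Lambda]\in\cJ_J}\mathrm{ind}_{N_G(\Lambda)}^{G}\big([D^\circ_\Lambda\actsfromright N_G(\Lambda)]^{\mathrm{naive}}_{(\cO(-1))}\big),
\]
with genuinely open strata and no signs; the $J=\emptyset$ term is the single symbol $(C\subseteq C,\,G/C\actsfromleft k(\bP(V)),\,(),\,\epsilon)$ you already identified. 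After applying $\psi_J$ (with the extra $\{0\}$-slot from $\cO_{\bP(V)}(-1)$) this is exactly \eqref{eqn.main-formula}. Your discussion of the line bundle bookkeeping (Convention~\ref{conv.minus1}, Lemma~\ref{lem.divisors}) is fine and is indeed where the twists are matched, but that part belongs to Theorem~\ref{theo:main-formula} and Lemma~\ref{lem.birationalreplacement}; for the present proposition the only ingredients needed are Proposition~\ref{prop.chi} and Example~\ref{exa.withchiI}.
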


\begin{proof}
This is essentially the formula from Proposition \ref{prop.chi}.
The first term has been determined above \eqref{eqn.PV0first}.
For the other terms, we use
Example \ref{exa.withchiI}.
\end{proof}

\begin{theo}
\label{theo:main-formula}
An algorithm to compute the class
$$
[\bP(V)\actsfromright G]_{(\cO_{\bP(V)}(-1))}
\in \Burn_{n,0}(G),
$$
is supplied by the formula
\eqref{eqn.main-formula} and the following recursive procedure.
For all nonempty chains \eqref{eqn.chainsubgroupswithcharacter}, starting with the longest chains and progressing to shorter ones, we compute
\begin{align*}
[D^\circ_\Lambda\actsfromright N_G(\Lambda)]^{\mathrm{naive}}_{(\cO(-1))}&=
[D_\Lambda\actsfromright N_G(\Lambda)]_{(\cO(-1))} \\
&-\sum_{[\Lambda']}
\mathrm{ind}_{N_G(\Lambda')}^{N_G(\Lambda)}\big(\psi_{I,J}\big(\tau_{I,J}\big([D^\circ_{\Lambda'}\actsfromright N_G(\Lambda')]^{\mathrm{naive}}_{(\cO(-1))}\big)\big)\big).
\end{align*}
The sum is over $N_G(\Lambda)$-conjugacy classes of chains strictly containing $\Lambda$, in the
sense of having stabilizer groups with compatible characters added at aribitrary locations along the chain $\Lambda$, including possibly at the beginning and/or end.
Let $t'>t$ denote the length of $\Lambda'$.
Now $\Lambda$ determines a subset
\[ J\subset\{0,\dots,t'\}, \]
consisting of $0$ as well as the positive integer indices of the members of $\Lambda$.
The summand, an element of $\Burn_{n,J}(N_G(\Lambda))$,
is viewed as an element of
\[ \Burn_{n,\{0,\dots,t\}}(N_G(\Lambda)) \]
by order-preserving re-indexing.
\end{theo}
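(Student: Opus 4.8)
The plan is to verify that the stated recursion correctly computes each $[D^\circ_\Lambda\actsfromright N_G(\Lambda)]^{\mathrm{naive}}_{(\cO(-1))}$, and that feeding these into \eqref{eqn.main-formula} (Proposition~\ref{prop:main-formula}) yields the desired class; the recursion's correctness is where essentially all the work lies. I would begin by fixing a nonempty chain $\Lambda$ as in \eqref{eqn.chainsubgroupswithcharacter} and applying Corollary~\ref{cor.chi} (in the line-bundle form of Remark~\ref{rem.withlinebundles}, equation~\eqref{eqn.UactedGwithLiformula}) to the pair $(D_\Lambda,\,(\cO(-1)))$ with the induced simple normal crossing divisor inside $D_\Lambda$ coming from the deeper strata of $\bP(V)_{\bar{\cL}}$. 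This expresses $[D^\circ_\Lambda\actsfromright N_G(\Lambda)]^{\mathrm{naive}}_{(\cO(-1))}$ as $[D_\Lambda\actsfromright N_G(\Lambda)]_{(\cO(-1))}$ minus a sum over the nonempty subsets of the index set of deeper divisor components, which — as in Notation~\ref{not.chi} and Example~\ref{exa.withchiI} — we reorganize into a sum over $N_G(\Lambda)$-conjugacy classes $[\Lambda']$ of chains strictly refining $\Lambda$; the inductive hypothesis (longest chains first) supplies the naive classes $[D^\circ_{\Lambda'}\actsfromright N_G(\Lambda')]^{\mathrm{naive}}_{(\cO(-1))}$ that appear.

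The key bookkeeping step is then to match the normal-bundle/pairing data. Here Lemma~\ref{lem.divisors} is the crucial geometric input: it identifies the divisor class cut out on $\bP(V)_{\bar\cL}$ by the blow-up along $\bP(V_{\Gamma^i,\epsilon})$, and hence pins down how the line bundles $\cO_{\bP(V_j/V_{j-1})}(\pm1)$ assemble, along the chain, into the normal-bundle characters at a deeper stratum. Relative to the sequence $(\cO(-1))$ of Convention~\ref{conv.minus1}, the extra divisor indices inserted by passing from $\Lambda$ to $\Lambda'$ sit at prescribed locations, and the re-indexing described in the statement (the order-preserving identification of $\Burn_{n,J}(N_G(\Lambda))$ inside $\Burn_{n,\{0,\dots,t\}}(N_G(\Lambda))$, combined with $\tau_{I,J}\in\Aut(\Z^I)$ from Example~\ref{exa.autoZI}) is exactly the change of basis on $\Z^I$ that converts the ``untwisted'' characters, which the recursion naturally produces, into the characters for $(\cO(-1))$. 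The compatibility $\omega_{J,K}\circ\psi_{I,J}=\psi_{(I\setminus J)\cup K,K}\circ\omega_{I,(I\setminus J)\cup K}$ from \eqref{eqn.omegapsi}, together with functoriality of $\psi$ and of induction (and their interaction with restriction), is what lets one move the $\psi_{I,J}$ and $\mathrm{ind}_{N_G(\Lambda')}^{N_G(\Lambda)}$ past one another so that the terms assembled stratum-by-stratum agree with those produced by Corollary~\ref{cor.chi} applied globally; I would verify this identity term by term on symbols, where it reduces to the definitions.

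I expect the main obstacle to be precisely the stabilizer-jumping phenomenon flagged in the discussion before the theorem: the strata $D^\circ_\Lambda$ of the De Concini--Procesi model are \emph{not} stabilizer-distinguishing, so a single locally closed stratum contributes more than one symbol, and one must argue that the recursion — which at each stage subtracts off the contributions of the strictly finer chains $\Lambda'$ — correctly isolates the ``generic-stabilizer-$H_{I,j}$'' part without double-counting or omitting the loci where the stabilizer jumps. Concretely this means checking that the birational replacement of Lemma~\ref{lem.birationalreplacement} (which lets us compute on the product of projective spaces $\bP(V_1)\times\cdots\times\bP(V/V_t)$ rather than on $D_\Lambda$ itself) is compatible with the whole recursion, i.e.\ that the finer chains $\Lambda'$ through $\Lambda$ correspond bijectively, $N_G(\Lambda)$-equivariantly, to the strata of the analogous model on that product, with matching normal-bundle data. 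Granting that dictionary, the proof concludes by assembling the stratum contributions via Proposition~\ref{prop:main-formula} and invoking the functoriality relations already recorded; termination of the algorithm is clear since $\bar\cL$ is finite and each recursive call strictly lengthens the chain.
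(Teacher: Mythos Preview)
Your plan is essentially the paper's own argument: the recursion is established by applying (the line-bundle analogue of) Proposition~\ref{prop.chi} to $D_\Lambda$ with its induced simple normal crossing divisor, identifying the resulting $\chi$-terms as open-stratum naive classes via Example~\ref{exa.withchiI}, and invoking Lemma~\ref{lem.divisors} to see that $\tau_{I,J}$ produces the correct $J$-indexed characters; Lemma~\ref{lem.birationalreplacement} supplies the first right-hand term as the class \eqref{eqn.takeknown}.

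One small correction: the formula in the theorem --- a sum over \emph{open} strata $D^\circ_{\Lambda'}$ with no alternating signs --- is exactly what you get by isolating the $J=\emptyset$ summand in the second display of Proposition~\ref{prop.chi}, not from Corollary~\ref{cor.chi} or \eqref{eqn.UactedGwithLiformula}, which instead involve \emph{closed} strata with signs $(-1)^{|J|-1}$, respectively punctured normal bundles; the description you give of the outcome (open strata, inductive input from longer chains) already matches Proposition~\ref{prop.chi}, so this is a citation slip rather than a gap. Your concern about stabilizer jumping is apt as motivation but needs no separate treatment in the proof: Proposition~\ref{prop.chi} and Example~\ref{exa.withchiI} hold for any $G$-invariant simple normal crossing stratification and already record the full naive class of each locally closed stratum, jumps and all, so the recursion is valid as stated without verifying any further dictionary between $D_\Lambda$ and the product of projective spaces.
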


\begin{proof}
The recursive procedure takes, as known, the class \eqref{eqn.takeknown},
which by Lemma \ref{lem.birationalreplacement} is the first term in the right-hand side
of the formula in the statement of the theorem.
Since $t'>t$, the summands are known as well.
So it suffices to establish the validity of the equality.
This follows, by combining
Proposition \ref{prop.chi} and
Example \ref{exa.withchiI}, where by
Lemma \ref{lem.divisors}, we see that the application of $\tau_{I,J}$ ensures that the correct divisor classes appear in the $J$-indexed character components.
\end{proof}

To summarize the results of this section, we state:

\begin{theo}
\label{theo:more-formula}
We have
\begin{align*}
[V\actsfromright G]^{\mathrm{naive}}&=
\omega_{\{0\}}\big([\bP(V)\actsfromright G]_{(\cO_{\bP(V)}(-1))}\big)\\
&\qquad\qquad
+\psi_{\{0\}}\big([\bP(V)\actsfromright G]_{(\cO_{\bP(V)}(-1))}\big),\\
[\bP(V)\actsfromright G/C]&=\eta_{\{0\}}\big([\bP(V)\actsfromright G]_{(\cO_{\bP(V)}(-1))}\big),\\
[\bP(1\oplus V)\actsfromright G]&=
\omega_{\{0\}}\big([\bP(V)\actsfromright G]_{(\cO_{\bP(V)}(-1))}\big) \\
&\!\!\!\!\!\!\!\!\!\!\!+\psi_{\{0\}}\big([\bP(V)\actsfromright G]_{(\cO_{\bP(V)}(-1))}\big)+
\psi_{\{0\}}\big([\bP(V)\actsfromright G]_{(\cO_{\bP(V)}(1))}\big).
\end{align*}
\end{theo}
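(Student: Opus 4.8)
The plan is to derive each of the three identities in Theorem~\ref{theo:more-formula} from the already-established class $[\bP(V)\actsfromright G]_{(\cO_{\bP(V)}(-1))}\in \Burn_{n,\{0\}}(G)$ of Proposition~\ref{prop:main-formula} (and its computation via Theorem~\ref{theo:main-formula}), by applying the transfer homomorphisms $\omega_{\{0\}}$, $\psi_{\{0\}}$, and $\eta_{\{0\}}$ already introduced in Sections~\ref{sect:strati} and \ref{sect:sumLi}. The work is bookkeeping with these operators, not new geometry; the geometric content is all contained in the square \eqref{eqn.projectivesquare} and in Theorem~\ref{theo:main-formula}.

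First I would treat $[V\actsfromright G]^{\mathrm{naive}}$. Here $V$ is the total space of $\cO_{\bP(V)}(-1)$ over $\bP(V)$, as recorded in diagram \eqref{eqn.projectivesquare}. So $[V\actsfromright G]^{\mathrm{naive}}=[\cO_{\bP(V)}(-1)\actsfromright G]^{\mathrm{naive}}$, which by Example~\ref{exa.naiveLi} (with $I=\{0\}$, $r=1$) equals $\psi_{\{0\}}(\cdot)+\omega_{\{0\}}(\cdot)$ applied to $[\bP(V)\actsfromright G]^{\mathrm{naive}}_{(\cO_{\bP(V)}(-1))}$; and since $\bP(V)$ is projective the naive class coincides with $[\bP(V)\actsfromright G]_{(\cO_{\bP(V)}(-1))}$. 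This gives the first identity. The second identity, $[\bP(V)\actsfromright G/C]=\eta_{\{0\}}([\bP(V)\actsfromright G]_{(\cO_{\bP(V)}(-1))})$, is exactly the projective case of Corollary~\ref{cor.etaI}: the cyclic subgroup $C$ acting by scalars is central in $G$, the line bundle $\cO_{\bP(V)}(-1)$ carries precisely the scalar character $\epsilon$ of $C$ so that $H^\vee$ (with $H=C$) is generated by it, the hypotheses of Lemma~\ref{lem.bundles} hold by Proposition~\ref{prop:projectivedp}, and $Z=Z_G(C)/C=G/C$; thus $\eta_{\{0\}}$ sends the class to $[\bP(V)\actsfromright G/C]$.

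For the third identity I would use that $\bP(1\oplus V)$ is the projective completion of $V=\cO_{\bP(V)}(-1)$, i.e.\ $\bP(1\oplus V)=\bP(\cO_{\bP(V)}\oplus \cO_{\bP(V)}(-1))$ is a $\bP^1$-bundle over $\bP(V)$ with two disjoint sections, the zero section (isomorphic to $\bP(V)$ with normal bundle $\cO_{\bP(V)}(-1)$) and the section at infinity (isomorphic to $\bP(V)$ with normal bundle $\cO_{\bP(V)}(1)$, the dual), and with complement $V$. Applying Corollary~\ref{cor.chi} (or the stratification formula of Proposition~\ref{prop.chi}) to $X=\bP(1\oplus V)$ with the divisor being the single section at infinity $D_\infty\cong \bP(V)$, I would write $[\bP(1\oplus V)\actsfromright G]=[V\actsfromright G]^{\mathrm{naive}}+[\mathcal N_{D_\infty}\actsfromright G]^{\mathrm{naive}}$; the first summand was just computed as $\omega_{\{0\}}+\psi_{\{0\}}$ of the $(\cO_{\bP(V)}(-1))$-class, and the normal-bundle contribution of $D_\infty$, whose normal bundle is $\cO_{\bP(V)}(1)$, gives by the definition of $[\mathcal N_{D/X}\actsfromright G]^{\mathrm{naive}}$ and Example~\ref{exa.naiveLi} the term $\psi_{\{0\}}$ applied to $[\bP(V)\actsfromright G]_{(\cO_{\bP(V)}(1))}$ (there is no $\omega_{\{0\}}$ term here because that part is already absorbed, or one argues via the exceptional-divisor interpretation of $\psi$ versus $\omega$ and the fact that $D_\infty^\circ=D_\infty$). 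Assembling these gives the stated formula.

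The main obstacle I anticipate is the last point: getting the exact shape of the normal-bundle contribution of the section at infinity right, in particular checking that $\bP(V)$ with line bundle $\cO_{\bP(V)}(1)$ still satisfies the conditions of Lemma~\ref{lem.bundles} (the scalar character of $C$ on $\cO_{\bP(V)}(1)$ is $\epsilon^{-1}$, which still generates $C^\vee$, so this is fine) and keeping straight which combination of $\omega_{\{0\}}$ and $\psi_{\{0\}}$ appears — i.e.\ distinguishing the contribution of the open part $V$ (which forgets nothing, hence both $\omega$ and $\psi$) from the contribution of the divisor $D_\infty$ treated as an exceptional-type divisor (which contributes only the "$\psi$" piece). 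Here I would lean on Remark~\ref{rem.withlinebundles}, especially \eqref{eqn.UactedGwithLidefinition}--\eqref{eqn.UactedGwithLiformula}, applied with the line bundles being the empty collection, to organize the computation and confirm no term has been double-counted or dropped.
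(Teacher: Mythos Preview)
Your treatment of the first two identities is correct and matches the paper exactly: the paper derives them (just before Proposition~\ref{prop:main-formula}) from diagram~\eqref{eqn.projectivesquare} via Example~\ref{exa.naiveLi}, respectively from Corollary~\ref{cor.etaI}, precisely as you do.

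For the third identity your overall idea---stratify $\bP(1\oplus V)$ by the open $V$ and the hyperplane $\bP(V)$ at infinity with normal bundle $\cO_{\bP(V)}(1)$---is the right one, but there is a genuine error in the execution. The displayed equation
\[
[\bP(1\oplus V)\actsfromright G]=[V\actsfromright G]^{\mathrm{naive}}+[\mathcal N_{D_\infty}\actsfromright G]^{\mathrm{naive}}
\]
is \emph{not} what Corollary~\ref{cor.chi} (or Proposition~\ref{prop.chi}) says. With a single divisor $D_1=\bP(V)$, Proposition~\ref{prop.chi} gives
\[
[X\actsfromright G]^{\mathrm{naive}}=[U\actsfromright G]^{\mathrm{naive}}+\psi_{\{1\}}\big(\chi_{\{1\}}(X\actsfromright G,(D_1))\big),
\]
and by Example~\ref{exa.withchiI}, $\chi_{\{1\}}=[\bP(V)\actsfromright G]_{(\cO_{\bP(V)}(1))}$, so the boundary term is already $\psi_{\{0\}}\big([\bP(V)\actsfromright G]_{(\cO_{\bP(V)}(1))}\big)$---no $\omega$ appears, and no ``absorption'' argument is needed. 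By contrast, $[\mathcal N_{D_\infty}\actsfromright G]^{\mathrm{naive}}$ equals, again by Example~\ref{exa.naiveLi} (or Proposition~\ref{prop.chi} with $I=\{1\}$),
\[
\omega_{\{0\}}\big([\bP(V)\actsfromright G]_{(\cO_{\bP(V)}(1))}\big)+\psi_{\{0\}}\big([\bP(V)\actsfromright G]_{(\cO_{\bP(V)}(1))}\big),
\]
so your displayed formula has an extra $\omega_{\{0\}}\big([\bP(V)\actsfromright G]_{(\cO_{\bP(V)}(1))}\big)$ that does not belong; the hand-wave ``that part is already absorbed'' is hiding this discrepancy rather than explaining it.

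A second, smaller gap: $\bP(1\oplus V)$ is not equal to the $\bP^1$-bundle $\bP(\cO_{\bP(V)}\oplus \cO_{\bP(V)}(-1))$; the latter is its blow-up at the fixed point $[1{:}0{:}\cdots{:}0]$. More importantly, the point $[1{:}0{:}\cdots{:}0]$ has stabilizer all of $G$, so Assumption~2 generally fails on $\bP(1\oplus V)$, and Corollary~\ref{cor.chi} does not apply directly. You should invoke birational invariance (Theorem~\ref{them:inv}, and Proposition~\ref{prop.bir} for the line-bundle class) to pass to a model in standard form---e.g., the $\bP^1$-bundle over $\bP(V)_{\bar{\cL}}$---before applying the stratification formula. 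On such a model the argument goes through and yields the stated three-term expression.
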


\begin{rema}
If $C$ is trivial, the formula for 
$$
[\bP(1\oplus V)\actsfromright G]
$$ 
simplifies. By the No-Name Lemma (see, e.g., \cite[Lemma 4.4]{CGR}), a $G$-vector bundle $W\to \bP(V)$, of relative dimension $d$, is equivariantly birational to $\bP^d\times \bP(V)$, with \emph{trivial} $G$-action on the first factor.
Recognizing that 
$\bP(1\oplus V)$ is equivariantly birationally equivalent to (the compactification of) a $G$-linearized line bundle over $\bP(V)$,
we obtain
$[\bP(1\oplus V)\actsfromright G]$ as the image of
$[\bP(V)\actsfromright G]$ under the product map with $[\bP^1\actsfromright \mathrm{triv}]$ (cf.\ Section \ref{sect:proper}).
\end{rema}

\section{Standard permutation action}
\label{sec.perm}

In this section, we discuss the standard permutation action, considered in 
\cite{feichtner-surv}, \cite{FKabelianizing}, \cite{FKdesing}. 

Let $n$ be a positive integer,
let $V$ be the subspace of $k^n$
defined by the vanishing of the sum of coordinates, and let
\[
\fS_n\to \GL(V^\vee)
\]
be the standard representation, with
$V\actsfromright \fS_n$
given by
\[
(x_1,\dots,x_n)\cdot \sigma=(x_{\sigma(1)},\dots,x_{\sigma(n)}).
\]
Let $\cL=\cL(V)$ be the lattice of stabilizer groups in 
this representation. 
The corresponding subspace arrangement is known as the \emph{braid arrangement}
(see, e.g., \cite{feichtner-surv}),
$\cL(V)$ is isomorphic to the \emph{partition lattice} $\Pi_n$ of $\{1,\dots,n\}$.

In detail, $\cL(V)$ 
consists of all subgroups $\Gamma\subset \fS_n$ of the form
$$
\Gamma = \fS_{I_1}\times \cdots \times \fS_{I_r}, 
$$
where 
$$
I_1\cup\cdots \cup I_r=\{1,\dots,n\}
$$
with $|I_1|+\dots+|I_r|=n$,
i.e., $\Gamma$ are labeled by partitions of $\{1,\dots,n\}$. 

\begin{exam}
\label{exa.S4affine}
We study the case $n=4$, i.e., the action of $\fS_4$ in 
its standard representation, matching it with the geometric description in \cite[Section 4.3]{DP}.
Conjugacy classes of nontrivial stabilizer groups are
\[ \Gamma_1:=\fS_4,\qquad
\Gamma_2:=\fS_2\times \fS_2,\qquad
\Gamma_3:=\fS_3,\qquad
\Gamma_4:=\fS_2. \]
Conjugacy class representatives of chains are tabulated, with generic stabilizer and characters, in Table \ref{table.S4}.
We describe the strata $D_I$.
\begin{itemize}
\item $D_1$ is $\bP(V)$, blown up in $7$ points: on $\bP(V)$ the equations $x_i=x_j$ for $1\le i<j\le 4$ define $6$ lines, and the $4$ points of triple intersection and $3$ remaining intersection points get blown up.
\item $D_2$ has $3$ components, $\A^1\times \bP(V/\langle(1,1,-1,-1)\rangle)$ and two others obtained by permutation of coordinates.
\item $D_3$ has $4$ components,
$\A^1\times\bP(V/\langle(1,1,1,-3)\rangle)$ and three others obtained by permutation of coordinates.
\item $D_4$ has $6$ components,
each a hyperplane in $V$ defined by equality of a pair of coordinates, blown up at the origin.
\item $D_{12}$ has $3$ components, $\bP(V/\langle(1,1,-1,-1)\rangle)$ and two others.
\item $D_{13}$ has $4$ components, $\bP(V/\langle(1,1,1,-3)\rangle)$ and three others.
\item $D_{14}$ has $6$ components $\bP(\Pi)$: associated with $\Gamma^1=\fS_4$ and $\Gamma^2=\fS_2$ are $V_1=0$ and $V_2=\Pi$, hyperplane in $V$ defined by the equality of a pair of coordinates.
\item $D_{24}$ and $D_{34}$ are unions of $6$ respectively $12$ copies of $\A^1$.
\item $D_{124}$ and $D_{134}$ consist of $6$ respectively $12$ points.
\end{itemize}
The geometry of strata exhibits an interesting feature, namely, the jumping of stabilizers on strata, in the open part of a stratum. 
We focus on a single component $\bP(\Pi)$ of $D_{14}$, with $\Pi$ defined by the equality of the first two coordinates.
Then $\bP(\Pi)$ inherits an $\fS_2\times \fS_2$-action, where the first factor $\fS_2$ acts trivially.
The action of the second factor fixes two points,
\begin{align*}
(0,0\subset \langle(0,0,1,-1)\rangle
&\subset \Pi\subset V)
\quad\text{and}\\
&(0,0\subset \langle(1,1,-1,-1)\rangle=V_2
\subset \Pi=V_3\subset V).
\end{align*}
Of these, only the second is in a deeper stratum, namely, $D_{124}$.
\end{exam}

\begin{table}
\[
\begin{array}{c|c|c|c}
t & \Lambda & \Delta^t_\Lambda & \epsilon^1,\dots,\epsilon^t \\ \hline
1 & \fS_4 & \mathrm{trivial} & - \\
1 & \fS_2\times\fS_2 & \mathrm{diag\ }\fS_2 & 1 \\
1 & \fS_3 & \mathrm{trivial} & - \\
1 & \fS_2 & \fS_2 & 1 \\
2 & \fS_4 \supset \fS_2\times\fS_2 & \mathrm{diag\ }\fS_2 & 0,1 \\
2 & \fS_4 \supset \fS_3 & \mathrm{trivial} & - \\
2 & \fS_4 \supset \fS_2 & \fS_2 & 0,1 \\
2 & \fS_2\times\fS_2 \supset \fS_2 & \fS_2\times\fS_2 & e_2,e_1 \\
2 & \fS_3 \supset \fS_2 & \fS_2 & 0,1 \\
3 & \fS_4 \supset \fS_2\times\fS_2 \supset \fS_2 & \fS_2\times\fS_2 & 0,e_2,e_1 \\
3 & \fS_4 \supset \fS_3 \supset \fS_2 & \fS_2 & 0,0,1
\end{array}
\]
\caption{Conjugacy classes of chains of stabilizer groups of $\fS_4$, with notation $\fS_2$, $\fS_3$ for standard subgroups, $\mathrm{diag\ }\fS_2:=\langle (1,2)(3,4)\rangle\subset\langle (1,2),(3,4)\rangle=\fS_2\times \fS_2$}
\label{table.S4}
\end{table}

As explained in Section~\ref{sect:comp-class}, projective actions are of primary interest.  
For the standard permutation action there is a direct combinatorial description
of $\bar{\cL}=\bar{\cL}(V)$:

\begin{prop}
\label{prop.projectiveSnlattice}
Fixing a choice of primitive character of the cyclic group $C_e$, for all $1\le e\le n$, the lattice $\bar{\cL}=\bar{\cL}(V)$ associated with the projectivized standard representation of $\fS_n$ admits the following description.
\begin{itemize}
\item
An element $(\Gamma,\mathrm{triv})\in \bar{\cL}$ is determined by a partition
\[ \{1,\dots,n\}=I_1\sqcup\dots\sqcup I_r \]
with $r\ge 2$, such that if
$r=2$, then $|I_1|\ne |I_2|$:
\[ \Gamma=\fS_{I_1}\times\dots\times \fS_{I_r}. \]
\item An element $(\Gamma,\epsilon)\in \bar{\cL}$ with nontrivial $\epsilon$ is determined by a nonempty subset $S\subseteq \{1,\dots,n\}$, a partition
\[ S=I_1\sqcup\dots\sqcup I_r, \]
an integer $e\ge 2$ dividing $|I_1|$, $\dots$, $|I_r|$, and
partitions of $I_i$ into $e$ cyclically ordered subsets of equal cardinality for $i=1$, $\dots$, $r$:
\[ \Gamma=\Gamma'\times \fS_{\{1,\dots,n\}\setminus S},\qquad \Gamma'\cong (\fS_{\frac{|I_1|}{e}}\times\dots\times \fS_{\frac{|I_r|}{e}})\wr C_e,
\]
with character given by the chosen character of the quotient $C_e$ of $\Gamma$.
Such $\Gamma$ is stabilizer of a point of $\bP(V)$, with nonzero coordinate entries indexed by $S$, obeying an evident pattern involving $e$th roots of unity, with respect to the partition of each set $I_i$.
\end{itemize}
\end{prop}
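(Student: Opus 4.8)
The plan is to identify $\bar{\cL}(V)$ concretely by running the general definition of the projective lattice from Section~\ref{sect:projective} through the combinatorics of the braid arrangement. First I would recall that, by the description preceding Proposition~\ref{prop.projectiveSnlattice} (the projective analogue of the stabilizer stratification), an element of $\bar{\cL}(V)$ is a pair $(\Gamma,\epsilon)$, where $\Gamma$ is the stabilizer in $\fS_n$ of a one-dimensional subspace $\ell = \langle v\rangle\subset V$, and $\epsilon\colon\Gamma\to k^\times$ records the scalar by which $\Gamma$ acts on $\ell$. Since $C$ is trivial here (the standard representation of $\fS_n$ contains no scalars for $n\ge 3$; for $n=2$ the statement is vacuous/degenerate and handled directly), every nontrivial $(\Gamma,\epsilon)$ must actually appear. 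So the task reduces to: given a nonzero vector $v=(x_1,\dots,x_n)$ with $\sum x_i=0$, determine $\Gamma=\Stab_{\fS_n}(\langle v\rangle)$ and the character $\epsilon$.

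The key step is the case analysis on $\epsilon$. If $\sigma\cdot v = v$ for all $\sigma\in\Gamma$ — the trivial-character case — then $\Gamma$ is the full setwise stabilizer of the partition of $\{1,\dots,n\}$ into level sets of the coordinate function $i\mapsto x_i$; writing those level sets as $I_1,\dots,I_r$ gives $\Gamma=\fS_{I_1}\times\dots\times\fS_{I_r}$. Here I must argue $r\ge 2$ (otherwise $v$ is a scalar multiple of $(1,\dots,1)$, which is not in $V$) and that if $r=2$ the blocks have unequal size (if $|I_1|=|I_2|$, the transposition of the two blocks scales $v$ by $-1$, contradicting triviality of $\epsilon$ — this $v$ belongs instead to the nontrivial-character stratum with $e=2$). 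Conversely, every such data genuinely occurs as the stabilizer of a suitable $v$, by choosing distinct values on the blocks summing to zero.

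For nontrivial $\epsilon$, the image of $\epsilon$ is a finite cyclic subgroup $\mu_e\subset k^\times$ with $e\ge 2$; let $C_e:=\im\epsilon\cong\mathbb Z/e$. Writing $S:=\{i : x_i\ne 0\}$ for the support, permutations in $\Gamma$ must preserve $S$ and act as $\fS_{\{1,\dots,n\}\setminus S}$ off $S$. On $S$, I group coordinates first by the \emph{orbit structure} the $C_e$-action imposes: $\sigma\in\ker\epsilon$ fixes $v$, so $\ker\epsilon$ is the stabilizer of the coordinate-value partition of $S$ restricted to one "phase", while a generator of $C_e$ cyclically permutes $e$ such phase-classes and multiplies by a primitive $e$th root of unity. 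Concretely, two coordinates $i,j\in S$ with $x_i=x_j$ lie in a common block; coordinates whose values differ by an $e$th root of unity factor lie in cyclically related blocks of the same $I_m$. This produces the partition $S=I_1\sqcup\dots\sqcup I_r$, with each $I_m$ further partitioned into $e$ cyclically ordered equal-size subsets, and $\Gamma'=(\fS_{|I_1|/e}\times\dots\times\fS_{|I_r|/e})\wr C_e$ as the stabilizer of that structure. The character $\epsilon$ is then exactly the chosen primitive character of the quotient $C_e$, fixed once and for all in the statement's normalization. The converse — each such combinatorial datum arises — follows by explicitly writing down $v$ with entries equal to $e$th roots of unity arranged according to the cyclic partitions and scaled per block, checking $\sum x_i = 0$ (each $I_m$ contributes zero since $1+\zeta_e+\dots+\zeta_e^{e-1}=0$), and verifying the stabilizer is as claimed.

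The main obstacle I anticipate is the bookkeeping in the nontrivial-$\epsilon$ case: one must show precisely that the setwise stabilizer of the "$e$-phased" coordinate pattern is the wreath product $\Gamma'$ and nothing larger, which requires checking that no extra permutation can scale $v$ by a root of unity outside $C_e$ or fix a phase while permuting others — essentially that the $C_e$-action is "rigid" given the pattern. This is where the hypothesis that $e$ divides each $|I_m|$ and the cyclic ordering of the sub-blocks is forced, rather than auxiliary data. I would also need to confirm compatibility of the lattice order $<$ (containment of groups with compatible characters) with the evident refinement order on these combinatorial data, and note that the two bulleted families are disjoint and exhaust $\bar{\cL}\setminus\{\infty\}$; together with $\{\infty\}$ (corresponding to $V_\infty=0$, i.e.\ the "empty support" limit) this gives the full poset.
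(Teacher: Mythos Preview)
Your proposal is correct and follows essentially the same route as the paper's own argument, which is a two-line sketch: for $\epsilon$ trivial one inherits the partition-lattice description from the affine case $\cL(V)$, excluding $r=1$ (the zero vector) and $r=2$ with $|I_1|=|I_2|$ (where the projective stabilizer acquires a sign character); for $\epsilon$ nontrivial, the requirement that a permutation act by scalar multiplication forces the support $S$ and the $\mu_e$-phase structure described in the statement. Your write-up simply unpacks this second step---identifying $\ker\epsilon$ as the Young subgroup of the level-set partition, observing that a generator of $C_e$ bijects each level set $\{i:x_i=c\}$ onto $\{i:x_i=\zeta_e c\}$ (whence the equal-size phase blocks within each $I_m$), and reading off the wreath-product structure---and supplies the easy converse by exhibiting an explicit $v$. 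One small phrasing issue: $\ker\epsilon$ is the stabilizer of the \emph{full} coordinate-value partition of $S$ (together with $\fS_{\{1,\dots,n\}\setminus S}$), not ``restricted to one phase''. Also, the verification of the lattice order and of disjointness of the two families is not part of what the proposition asserts, so you may omit it.
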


\begin{proof}
When the character is trivial, we pick up the description from the affine case, where we exclude the cases $r=1$ (corresponding to the zero subspace of $V$) and $r=2$ when $|I_1|=|I_2|$
(corresponding to a one-dimensional subspace of $V$ where the stabilizer group upon projectivization picks up a nontrivial character).
When the character is nontrivial, the constraint for the permutation to act by scalar multiplication by a primitive root of unity forces the set $S$, indexing the nonzero coordinates, to have the structure indicated in the statement.
\end{proof}

\begin{exam}
\label{exa.S4projective}
When $n=4$, Table~\ref{table.S4projective} lists
(conjugacy class representatives of)
stabilizer groups, and their chains; see Proposition \ref{prop.projectiveSnlattice}.
For each, the generic stabilizer group and characters have been computed, as described in Section \ref{sect:projective}.
\end{exam}

\begin{table}
\[
\begin{array}{c|c|c|c|c}
t & \Lambda & \Delta^t_\Lambda & \epsilon & \epsilon^1,\dots,\epsilon^t \\ \hline
1 & \fS_3 & \mathrm{trivial} & - & - \\
1 & \fS_2 & \fS_2 & 0 & 1 \\
1 & \fS_2 \wr C_2 & \mathrm{diag\ }\fS_2 & 0 & 1 \\
1 & C_2 & \mathrm{diag\ }\fS_2 & 1 & 0 \\
1 & C_2\times \fS_2 & \fS_2 & 1 & 0\\
1 & C_3 & \mathrm{trivial} & - & - \\
1 & C_4 & \mathrm{trivial} & - & - \\
2 & \fS_3 \supset \fS_2 & \fS_2 & 0 & 0,1 \\
2 & \fS_2 \wr C_2 \supset \fS_2 & \fS_2\times \fS_2 & 0 & e_2, e_1 \\
2 & \fS_2 \wr C_2 \supset C_2 & \mathfrak{K}_4 & e_2 & e_1+e_2,e_1 \\
2 & C_2\times \fS_2 \supset \fS_2 & \fS_2\times \fS_2 & e_1 & 0,e_2 \\
2 & C_2\times \fS_2 \supset C_2 & \fS_2\times \fS_2 & e_1 & e_2,0 \\
2 & C_4 \supset C_2 & C_4 & 1 & 3,2
\end{array}
\]
\caption{Conjugacy classes of chains of stabilizer groups, for the projectivized standard representation of $\fS_4$}
\label{table.S4projective}
\end{table}

\begin{prop}
\label{prop.projectivizedS4inBurn}
The class
$[\bP(V)\actsfromright \fS_4]$
is
\begin{align*}
&(\mathrm{triv}, \fS_4\actsfromleft k(t,u),())
+(\fS_2,\fS_2\actsfromleft k(t),(1))\\
&+2(\mathrm{diag\ }\fS_2,\mathfrak{K}_4\actsfromleft k(t),(1))\\
&+2(\fS_2\times \fS_2,\mathrm{triv}\actsfromleft k,(e_1,e_1+e_2))\\
&+(\mathfrak{K}_4,\mathrm{triv}\actsfromleft k,(e_1,e_2))
+(C_3,\mathrm{triv}\actsfromleft k,(1,1))\\
&+2(C_4,\mathrm{triv}\actsfromleft k,(1,2))\in \Burn_2(\fS_4).
\end{align*}
\end{prop}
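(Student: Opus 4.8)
The plan is to apply Theorem~\ref{theo:more-formula}, which expresses $[\bP(V)\actsfromright \fS_4]$ as $\eta_{\{0\}}$ applied to the indexed class $[\bP(V)\actsfromright \fS_4]_{(\cO_{\bP(V)}(-1))}\in\Burn_{2,\{0\}}(\fS_4)$, and to compute the latter by the recursive algorithm of Theorem~\ref{theo:main-formula} built on the main formula \eqref{eqn.main-formula} of Proposition~\ref{prop:main-formula}. The combinatorial input is Table~\ref{table.S4projective}, which lists the thirteen conjugacy classes of chains $\Lambda$ in $\bar\cL$ together with the generic stabilizer $\Delta^t_\Lambda$, the scalar character $\epsilon$, and the characters $\epsilon^1,\dots,\epsilon^t$; here $C=\mathrm{triv}$ since the standard representation of $\fS_4$ has no scalars, so the leading term of \eqref{eqn.main-formula} is simply $(\mathrm{triv},\fS_4\actsfromleft k(\bP(V)),(),-)$, i.e.\ $(\mathrm{triv},\fS_4\actsfromleft k(t,u),())$, matching the first summand of the claimed answer.

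First I would organize the chains of Table~\ref{table.S4projective} by length $t$, and run the recursion from $t=2$ down to $t=1$. For each $t=2$ chain $\Lambda$, the variety $D_\Lambda$ is a product of three projective spaces (of dimensions summing, with the deleted factors, to $n-1=2$), but in every case at least two of these factors are points, so $D^\circ_\Lambda$ is $0$-dimensional; by Lemma~\ref{lem.birationalreplacement} and the base case of the recursion, $[D^\circ_\Lambda\actsfromright N_G(\Lambda)]^{\mathrm{naive}}_{(\cO(-1))}$ is read directly off the table (a single point with its stabilizer $\Delta^2_\Lambda$ and the $\Z^{\{0,1,2\}}$-pairing given by $\epsilon,\epsilon^1,\epsilon^2$, transformed via Convention~\ref{conv.minus1} into the $(\cO(-1))$-normalization). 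For the $t=1$ chains, $D_\Lambda$ is $1$-dimensional and the locally closed part $D^\circ_\Lambda$ is obtained from $[\bP(V_1)\times\bP(V_2/V_1)\times\bP(V/V_1)\actsfromright N_G(\Lambda)]_{(\cO(-1))}$ by subtracting, for each $t'=2$ chain $\Lambda'$ refining $\Lambda$, the contribution $\mathrm{ind}^{N_G(\Lambda)}_{N_G(\Lambda')}\psi_{I,J}\tau_{I,J}\big([D^\circ_{\Lambda'}\actsfromright N_G(\Lambda')]^{\mathrm{naive}}_{(\cO(-1))}\big)$ already computed. The $\tau_{I,J}$ twist is dictated by Lemma~\ref{lem.divisors} and is where the subtle bookkeeping lies. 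Then I would assemble \eqref{eqn.main-formula}: apply $\psi_{\{0,\dots,t\},\{0\}}$ and $\mathrm{ind}^{\fS_4}_{N_G(\Lambda)}$ to each $[D^\circ_\Lambda\actsfromright N_G(\Lambda)]^{\mathrm{naive}}_{(\cO(-1))}$, sum over all chains, add the leading term, and finally apply $\eta_{\{0\}}$ (which replaces $(H\subseteq H',Z'\actsfromleft K,\beta,\gamma)$ by $(H'/H,Z'\actsfromleft K,\beta)$) to land in $\Burn_2(\fS_4)$.

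The step I expect to be the main obstacle is correctly handling the jumping of stabilizers on locally closed strata, illustrated in Example~\ref{exa.S4affine}: on a divisor $D_\Lambda$ such as a component of $D_{\fS_2}$ (i.e.\ $\Lambda=\fS_2$), the generic stabilizer is only $\fS_2$, but special points of $D^\circ_\Lambda$ carry larger stabilizers ($\mathfrak K_4$, $\fS_2\times\fS_2$, etc.) coming from the $t=2$ refinements; the subtraction in Theorem~\ref{theo:main-formula} must exactly remove the over-counted higher-stabilizer points while leaving the correct generic-stabilizer contribution. Getting the multiplicities right — e.g.\ the coefficient $2$ in front of $(\mathrm{diag\ }\fS_2,\mathfrak K_4\actsfromleft k(t),(1))$ and of $(\fS_2\times\fS_2,\mathrm{triv}\actsfromleft k,(e_1,e_1+e_2))$ and of $(C_4,\mathrm{triv}\actsfromleft k,(1,2))$ — requires carefully counting $\fS_4$-orbits of chains versus $N_G(\Lambda)$-orbits of refinements and tracking the induction maps, together with verifying that several a priori distinct symbols coincide after applying the conjugation relation $\mathbf{(C)}$ and the notational identifications \eqref{eqn.Ind}. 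Apart from this, after all cancellations the remaining terms are either $2$-dimensional (the $k(t,u)$ term), $1$-dimensional (the $k(t)$ terms from chains $\Lambda=\fS_2$ and $\Lambda=\fS_2\wr C_2$), or $0$-dimensional (from the $\fS_4$-orbits of $\fS_2\times\fS_2$, $\mathfrak K_4$, $C_3$, $C_4$ stabilizers), and one checks these match the stated formula termwise; I would also sanity-check by applying $\kappa^{\fS_4}$ or by comparing with the affine computation on $V_\cL$ via Theorem~\ref{theo:more-formula}.
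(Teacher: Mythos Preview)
Your proposal follows exactly the paper's own approach: run the recursion of Theorem~\ref{theo:main-formula} over the chains in Table~\ref{table.S4projective}, compute each $[D_\Lambda\actsfromright N_{\fS_4}(\Lambda)]_{(\cO(-1))}$ (trivially for $t=2$, then subtracting the red correction terms for $t=1$), assemble via \eqref{eqn.main-formula}, and apply $\eta_{\{0\}}$. One small slip: for $t=1$ the product is $\bP(V_1)\times\bP(V/V_1)$ (two factors, not three), but this does not affect your plan.
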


\begin{proof}
We refer to Table \ref{table.S4projective}: each line in the table is a contribution from a divisor ($t=1$) or intersection of pair of divisors ($t=2$).
In each case, we record a description and the class
\[
[D_\Lambda\actsfromright N_{\fS_4}(\Lambda)]_{(\cO(-1))}\in \Burn_{3,\{0,\dots\,t\}}(N_{\fS_4}(\Lambda),\Delta^t_\Lambda).
\]

Cases with $t=1$:
\begin{itemize}
\item $\Gamma^1=\fS_3$ with $V_1=\langle(1,1,1,-3)\rangle$ gives rise to
$N_{\fS_4}(\Lambda)=\fS_3$, acting trivially on $V_1^\vee$ and via the standard representation on $(V/V_1)^\vee$.
The class is
\begin{align*}
&(\mathrm{triv}\subseteq\mathrm{triv},\fS_3\actsfromleft k(t),(),(0,0))+
{\color{red}(\mathrm{triv}\subseteq\fS_2,\mathrm{triv}\actsfromleft k,(1),(0,0))}\\
&+(\mathrm{triv}\subseteq\fS_2,\mathrm{triv}\actsfromleft k,(1),(0,1))+
(\mathrm{triv}\subseteq C_3,\mathrm{triv}\actsfromleft k,(1),(0,1)) \\
&\qquad\qquad\in \Burn_{3,\{0,1\}}(\fS_3,\mathrm{triv}).
\end{align*}
\item $\Gamma^1=\fS_2$ with $V_1$ defined by equality of first two coordinates gives rise to
$N_{\fS_4}(\Lambda)=\fS_2\times \fS_2$, with first factor acting trivially on $V_1^\vee$ and nontrivially on $(V/V_1)^\vee$, and second factor acting nontrivially on $V_1^\vee$ and trivially on $(V/V_1)^\vee$.
The class is
\begin{align*}
&(\fS_2\subseteq\fS_2,\fS_2\actsfromleft k(t),(),(0,1))+
{\color{red}(\fS_2\subseteq\fS_2\times\fS_2,\mathrm{triv}\actsfromleft k,(e_2),(0,e_1))}
\\
&+{\color{red}(\fS_2\subseteq\fS_2\times\fS_2,\mathrm{triv}\actsfromleft k,(e_2),(e_2,e_1+e_2))} \\
&\qquad\qquad\in \Burn_{3,\{0,1\}}(\fS_2\times \fS_2,\fS_2).
\end{align*}
\item $\Gamma^1=\fS_2\wr C_2$, with $V_1=\langle(1,1,-1,-1)\rangle$ gives rise to $N_{\fS_4}(\Lambda)=\fS_2\wr C_2$, acting by the given character on $V_1^\vee$ and via an irreducible representation on $(V/V_1)^\vee$.
The class is
\begin{align*}
&(\mathrm{diag\ }\fS_2\subseteq\mathrm{diag\ }\fS_2,\mathfrak{K}_4\actsfromleft k(t),(),(0,1))\\
&+{\color{red}(\mathrm{diag\ }\fS_2\subseteq \fS_2\times \fS_2,\mathrm{triv}\actsfromleft k,(e_1+e_2),(0,e_1))}\\
&+{\color{red}(\mathrm{diag\ }\fS_2\subseteq \mathfrak{K}_4,\mathrm{triv}\actsfromleft k,(e_2),(e_2,e_1+e_2))}\\
&+(\mathrm{diag\ }\fS_2\subseteq C_4,\mathrm{triv}\actsfromleft k,(2),(2,3)) \in \Burn_{3,\{0,1\}}(\mathfrak D_4,\mathrm{diag\ }\fS_2).
\end{align*}
\item $\Gamma^1=C_2$, with $V_1=\langle(1,-1,0,0),(0,0,1,-1)\rangle$,
has $N_{\fS_4}(\Lambda)$ of order $8$, acting via an irreducible representation on $V_1^\vee$ and by a nontrivial character on $(V/V_1)^\vee$.
The class is
\begin{align*}
&(\mathrm{diag\ }\fS_2\subseteq \mathrm{diag\ }\fS_2,\mathfrak{K}_4\actsfromleft k(t),(),(1,1))\\
&+{\color{red}(\mathrm{diag\ }\fS_2\subseteq \fS_2\times \fS_2,\mathrm{triv}\actsfromleft k,(e_1+e_2),(e_1,e_1))}\\
&+{\color{red}(\mathrm{diag\ }\fS_2\subseteq \mathfrak{K}_4,\mathrm{triv}\actsfromleft k,(e_2),(e_1,e_1+e_2))}\\
&+{\color{red}(\mathrm{diag\ }\fS_2\subseteq C_4,\mathrm{triv}\actsfromleft k,(2),(1,1))}\in \Burn_{3,\{0,1\}}(\mathfrak D_4,\mathrm{diag\ }\fS_2).
\end{align*}
\item $\Gamma^1=C_2\times \fS_2$, with $V_1=\langle(1,-1,0,0)\rangle$, has $N_{\fS_4}(\Lambda)=C_2\times \fS_2$, acting on $(V/V_1)^\vee$ by a nontrivial action of the second factor.
The class is
\begin{align*}
&(\fS_2\subseteq \fS_2,\fS_2\actsfromleft k(t),(),(1,1))\\
&+{\color{red}(\fS_2\subseteq \fS_2\times \fS_2,\mathrm{triv}\actsfromleft k,(e_2),(e_1,e_1+e_2))}\\
&+{\color{red}(\fS_2\subseteq \fS_2\times \fS_2,\mathrm{triv}\actsfromleft k,(e_2),(e_1,e_1))}\in \Burn_{3,\{0,1\}}(\fS_2\times \fS_2,\fS_2).
\end{align*}
\item $\Gamma^1=C_3$ with $V_1=\langle(1,\zeta,\zeta^2,0)\rangle$
gives rise to $N_{\fS_4}(\Lambda)=C_3$ acting on $(V/V_1)^\vee$ by the sum of a trivial and a nontrivial character.
The class is
\begin{align*}
&(\mathrm{triv}\subseteq\mathrm{triv},C_3\actsfromleft k(t),(),(0,0))+
(\mathrm{triv}\subseteq C_3,\mathrm{triv}\actsfromleft k,(1),(1,1))
\\
&+(\mathrm{triv}\subseteq C_3,\mathrm{triv}\actsfromleft k,(2),(1,2))
\in \Burn_{3,\{0,1\}}(C_3,\mathrm{triv}).
\end{align*}
\item $\Gamma^1=C_4$ with
$V_1=\langle(1,i,-1,-i)\rangle$
gives rise to $N_{\fS_4}(\Lambda)=C_4$ acting on $(V/V_1)^\vee$ via a sum of a primitive and a nonprimitive character.
The class is
\begin{align*}
&(\mathrm{triv}\subseteq\mathrm{triv},C_4\actsfromleft k(t),(),(0,0))+
(\mathrm{triv}\subseteq C_4,\mathrm{triv}\actsfromleft k,(1),(1,1))
\\
&+{\color{red}(\mathrm{triv}\subseteq C_4,\mathrm{triv}\actsfromleft k,(3),(1,2))}
\in \Burn_{3,\{0,1\}}(C_4,\mathrm{triv}).
\end{align*}
\end{itemize}

When $t=2$, we have $N_{\fS_4}(\Lambda)$ acting on $$
V_1^\vee\times(V_2/V_1)^\vee\times(V/V_2)^\vee\cong k\times k\times k,
$$
so $\Delta^2_\Lambda=N_{\fS_4}(\Lambda)$, and the class is
\begin{align*}
(N_{\fS_4}(\Lambda)\subseteq N_{\fS_4}&(\Lambda),
\mathrm{triv}\actsfromleft k,(),(\epsilon,\epsilon^1-\epsilon,\epsilon^2-\epsilon^1)) \\
&\qquad\qquad \in \Burn_{3,\{0,1,2\}}(N_{\fS_4}(\Lambda),\Delta^2_\Lambda).
\end{align*}

By the formula from the statement of
Theorem \ref{theo:main-formula},
we subtract the indicated terms from $[D_\Lambda\actsfromleft N_{\fS_4}(\Lambda]_{(\cO(-1))}$, to obtain $[D^\circ_\Lambda\actsfromleft N_{\fS_4}(\Lambda)]^{\mathrm{naive}}_{(\cO(-1))}$.
When $t=2$, we do not subtract anything.
When $t=1$, we obtain from the $t=2$ classes the terms to subtract, e.g., from
\[
[D^\circ_{\fS_3\supset \fS_2}\actsfromright \fS_2]^{\mathrm{naive}}_{(\cO(-1))}=(\fS_2\subseteq \fS_2,\mathrm{triv}\actsfromleft k,(),(0,0,1)),
\]
the term
\[
{\color{red}(\mathrm{triv}\subseteq \fS_2,\mathrm{triv}\actsfromleft k,(1),(0,0))}
\]
to subtract from
$[D_{\fS_3}\actsfromright \fS_3]_{(\cO(-1))}$, to obtain
\begin{align*}
[&D^\circ_{\fS_3}\actsfromright \fS_3]^{\mathrm{naive}}_{(\cO(-1))}
=
(\mathrm{triv}\subseteq\mathrm{triv},\fS_3\actsfromleft k(t),(),(0,0))\\
&+(\mathrm{triv}\subseteq\fS_2,\mathrm{triv}\actsfromleft k,(1),(0,1))+
(\mathrm{triv}\subseteq C_3,\mathrm{triv}\actsfromleft k,(1),(0,1)).
\end{align*}
Finally, the classes are combined using Proposition \ref{prop:main-formula} to obtain
\begin{align*}
[&\bP(V)\actsfromright \fS_4]_{\cO_{\bP(V)}(-1)}=
(\mathrm{triv}\subseteq \mathrm{triv}, \fS_4\actsfromleft k(t,u),(),(0))\\
&+(\mathrm{triv}\subseteq \fS_2,\mathrm{triv}\actsfromleft k,(1,1),(0))
+(\mathrm{triv}\subseteq C_3,\mathrm{triv}\actsfromleft k,(1,1),(0))\\
&+(\mathrm{triv}\subseteq \fS_2,\fS_2\actsfromleft k(t),(1),(0))+(\mathrm{triv}\subseteq C_4,\mathrm{triv}\actsfromleft k,(2,3),(2))\\
&+(\mathrm{triv}\subseteq \mathrm{diag\ }\fS_2,\mathfrak{K}_4\actsfromleft k(t),(1),(0))\\
&+(\mathrm{triv}\subseteq\mathrm{diag\ }\fS_2,\mathfrak{K}_4\actsfromleft k(t),(1),(1))\\
&+(\mathrm{triv}\subseteq \fS_2,\fS_2\actsfromleft k(t),(1),(1))
+(\mathrm{triv}\subseteq C_3,\mathrm{triv}\actsfromleft k,(1,1),(1))\\
&+(\mathrm{triv}\subseteq C_3,\mathrm{triv}\actsfromleft k,(2,2),(1))
+(\mathrm{triv}\subseteq C_4,\mathrm{triv}\actsfromleft k,(1,1),(1))\\
&+(\mathrm{triv}\subseteq \fS_2\times \fS_2,\mathrm{triv}\actsfromleft k,(e_2,e_1+e_2),(0))\\
&+(\mathrm{triv}\subseteq \mathfrak{K}_4,\mathrm{triv}\actsfromleft k,(e_1,e_2),(e_2))+(\mathrm{triv}\subseteq C_4,\mathrm{triv}\actsfromleft k,(2,3),(1))\\
&+(\mathrm{triv}\subseteq \fS_2\times \fS_2,\mathrm{triv}\actsfromleft k,(e_1,e_2),(e_1))\\
&+(\mathrm{triv}\subseteq \fS_2\times \fS_2,\mathrm{triv}\actsfromleft k,(e_1+e_2,e_2),(e_1)).
\end{align*}
Applying $\eta_{\{0\}}$ and relations in $\Burn_2(\fS_4)$, we obtain the
expression in the statement of the proposition.
\end{proof}

\section{Projective linear actions on $\bP^2$}
\label{sect:proj2}

In this section we address an open problem, stated in  \cite[Section 9]{DolIsk}: 

\begin{prob}
Find the conjugacy classes in the Cremona group $\mathrm{Cr}_2$ of subgroups of $\PGL_3$.
\end{prob}

In fact, the authors wrote: 
{\em We do not know whether any 
two isomorphic non-conjugate subgroups of $\PGL_3$
are conjugate in $\mathrm{Cr}_2$.}

\

Here, we identify all finite groups $G$ that admit
embeddings into $\PGL_3$ giving nonbirational actions on $\bP^2$, up to a small list of potential exceptions.

\

We follow \cite[Section 4.2]{DolIsk} to organize the classification.  
Recall the basic terminology: a (faithful) representation $G\to \GL(V^\vee)$ is called:
\begin{itemize}
\item {\em intransitive}: if it is reducible, {\em transitive} if it is irreducible;
\item {\em imprimitive} if it is transitive but contains an intransitive {\em normal} subgroup $G'$; in this case $G/G'$ permutes the $G'$ representations;
\item {\em primitive} if it is neither intransitive, nor imprimitive.
\end{itemize}
The same terminology is used for subgroups in $\PGL(V^\vee)$, if
one of their liftings is as above. 
The classification of finite subgroups 
$G\subset \PGL_3$ takes the form: 
\begin{enumerate}
\item If $G$ is intransitive then 
$$
G= C_n\times G'\subset k^\times  \times \GL_2(k), \quad n\in \bN, 
$$
and $G'$ is 
a finite subgroup of $\GL_2$, a binary extensions of a  subgroup $\bar{G}'\subset \PGL_2$, which in turn is:
\begin{itemize}
\item cyclic, dihedral, $\fA_4$, $\fS_4$, or $\fA_5$.
\end{itemize}
The classification of all possibilities for $G'$ is in \cite[Corollary 4.6]{DolIsk}.
\item 
If $G$ is transitive but imprimitive then   
it is one of the four types
(see \cite[Theorem 4.7]{DolIsk}):
\begin{itemize}
\item[(2.1)] extension of $C_3$ by $(\bZ/n\bZ)^2$, with the action 
$$
(\zeta_nx_0, x_1, x_2), \quad (x_0, \zeta_n x_1, x_2), \quad (x_2,x_0,x_1),
$$
\item[(2.2)] 
extension of $\fS_3$ by $(\bZ/n\bZ)^2$, 
here the $\fS_3$ acts permutation of the coordinates $(x_0,x_1,x_2)$, and the abelian subgroup acts as above, 
\item[(2.3)] extension of $C_3$ by $(\bZ/n\bZ)\oplus (\bZ/m\bZ)$, $m=n/r$, with $r>1$, $r\mid n$, 
$s^2-s+1 = 0\pmod r$, and 
with the action 
$$
(\zeta_mx_0,x_1,x_2), \quad (\zeta_n^s x_0, \zeta_nx_1,x_2), \quad (x_2,x_0,x_1),
$$
\item[(2.4)] extension of $\fS_3$ by $(\bZ/n\bZ) \oplus (\bZ/m\bZ)$, $m=n/3$, $3\mid n$,  
here the $\fS_3$ acts permuation of the coordinates $(x_0,x_1,x_2)$, and the abelian subgroup by
$$
(\zeta_m x_0, x_1,x_2), \quad (\zeta_n^2, \zeta_nx_1, x_2).
$$
\end{itemize}
In each of these actions, there are $3$ distinguished points on $\bP^2$, fixed by the abelian subgroup, and permuted by $C_3$, respectively $\fS_3$. This means that $G$ can be realized on a del Pezzo surface of degree $6$, with the abelian subgroup acting freely on the torus. 
\item 
If $G$ is primitive then  it is one of the groups:
\begin{itemize}
\item $\fA_5$, $\fA_6$, $\mathrm{PSL}_2(\bF_7)$, the  
Hessian group $3^2:\mathrm{SL}_2(\mathbb F_3)$, 
and two of its subgroups.
\end{itemize}
\end{enumerate}

As mentioned in the Introduction, the standard approaches to distinguishing $G$-actions, up to birationality, rely on group cohomology $\rH^1(G, \Pic(X))$, or birational rigidity.
The first is not applicable
since $\Pic(\bP^n) = \bZ$, which implies vanishing of cohomology.

\subsection*{Primitive actions}
These actions are now understood, via birational rigidity techniques:

\begin{theo}\cite{Sako}
\label{thm:sako}
Let $G\subset \PGL_3$ be a finite group. Then $\bP^2$ is $G$-birationally rigid if and only if $G$ is transitive and not isomorphic to $\fA_4$ or $\fS_4$.
\end{theo}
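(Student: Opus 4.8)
The plan is to run the two-dimensional $G$-equivariant Sarkisov program, feeding in the classification of transitive finite subgroups of $\PGL_3$ recalled above from \cite{DolIsk}. Note first that $\bP^2$, with any $G$-action, is a $G$-Mori fiber space over a point: it has Picard rank one and no $(-1)$-curves, so it is automatically $G$-minimal. Hence $\bP^2$ is $G$-birationally rigid exactly when every $G$-equivariant birational map $\chi\colon\bP^2\dashrightarrow S$ onto a $G$-Mori fiber space $S$ is a $G$-isomorphism onto $\bP^2$. By the equivariant Noether--Fano inequality and the structure theory of elementary ($G$-Sarkisov) links between two-dimensional $G$-Mori fiber spaces, such a $\chi$ factors into elementary links, so it is enough to classify the links issuing from $\bP^2$ and decide, for each group, whether one of them reaches a $G$-Mori fiber space not $G$-isomorphic to $\bP^2$; equivalently, one may invoke the Dolgachev--Iskovskikh classification of minimal $G$-surfaces together with the birational relations among them.

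I would treat the ``only if'' direction first. If $G$ is intransitive, the faithful representation $V^\vee$ is reducible, so $V$ has a proper $G$-invariant subspace and $\bP^2=\bP(V)$ carries a $G$-fixed point $p$; blowing up $p$ produces the Hirzebruch surface $\bF_1$ with its $G$-equivariant conic-bundle structure over $\bP^1$ (the pencil of lines through $p$), a $G$-Mori fiber space carrying a fibration and hence not $G$-isomorphic to $\bP^2$. If $G\cong\fA_4$ or $\fS_4$, one checks these embed in $\PGL_3$ only transitively (indeed only imprimitively: they sit in the families (2.1), (2.2) with $n=2$, the normal subgroup $(\bZ/2)^2$ being intransitive). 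Such $G$ has an orbit of four points in general position in $\bP^2$ --- for $\fA_4$, the four points $\bP(V^{C})$ attached to the four Sylow $3$-subgroups $C$, transitively permuted by conjugation; for $\fS_4$, the orbit of a point whose stabilizer is an $\fS_3$. Blowing these up yields a quintic del Pezzo surface on which $G$ permutes the four exceptional curves transitively, so $\Pic^G$ has rank two: one extremal contraction returns to $\bP^2$, but the other is the conic bundle over $\bP^1$ attached to the $G$-invariant pencil $|2L-E_1-E_2-E_3-E_4|$ of conics through the four points (with $G$ acting on the base via $\fA_4\cong\mathrm{PSL}_2(\bF_3)$, resp.\ $\fS_4\cong\mathrm{PGL}_2(\bF_3)$, permuting the three singular fibers). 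One verifies this conic bundle is $G$-minimal --- the six components of its singular fibers form a single $G$-orbit --- hence a genuinely new $G$-Mori fiber space; see \cite{DolIsk}. So in all three cases $\bP^2$ is not $G$-birationally rigid.

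For the ``if'' direction, let $G$ be transitive with $G\not\cong\fA_4,\fS_4$ and $\chi\colon\bP^2\dashrightarrow S$ as above; I want to show $\chi$ is a $G$-isomorphism. Transitivity (irreducibility of $V^\vee$) excludes any $G$-fixed point or $G$-invariant line, hence any link of type I issuing from a fixed point. If $\chi$ is not an isomorphism, Noether--Fano produces a $G$-invariant cluster $\Sigma\subset\bP^2$ of points (possibly infinitely near) of excessive multiplicity for the strict transform of $|-K_S|$, i.e.\ a Sarkisov link obtained by blowing up $\Sigma$; the orbits of three (quadratic transformation), seven (Geiser involution) and eight (Bertini involution) points in general position give links $\bP^2\dashrightarrow\bP^2$, hence are trivial, so one is reduced to $G$-orbits of four, five or six points and their degenerations. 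Here one goes down the list: for the imprimitive families (2.1)--(2.4) with $n\ge 3$ a check of the orbit structure shows there is no $G$-orbit of four points in general position, and on the degree-$6$ del Pezzo model the remaining contractions of the hexagon of $(-1)$-curves all return to $\bP^2$; for the primitive groups $\fA_5$, $\fA_6$, $\mathrm{PSL}_2(\bF_7)$ and the Hessian group with its two listed subgroups, the smallest general-position orbits produce the $\fA_5$-cubic, the $\fA_6$ degree-one del Pezzo, the degree-two del Pezzo attached to the Klein quartic, and the Hessian configuration, and in each case the second extremal contraction of the resulting Picard-rank-two surface is again a ``sixer''-type contraction to $\bP^2$ rather than a conic-bundle fibration. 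In every case the link is trivial, so $\chi$ is a $G$-isomorphism and $\bP^2$ is $G$-birationally rigid.

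The crux --- and the main obstacle --- is this last case analysis: for each transitive $G$ one must determine all $G$-invariant clusters of (possibly infinitely near) points that are admissible Sarkisov centres and verify, from the explicit geometry of the associated del Pezzo surface or conic bundle in the Dolgachev--Iskovskikh tables, that no link escapes to a new $G$-Mori fiber space. Pinning down precisely why $n=2$ --- i.e.\ $\fA_4$ and $\fS_4$ --- is the sole exception among the imprimitive families, and confirming the $G$-rigidity of the degree-three, -two and -one del Pezzo surfaces carrying the primitive actions (which needs the detailed action of $G$ on their lines, resp.\ bitangents, resp.\ the Hessian $27$-line configuration, together with the equivariant Noether--Fano method), are the delicate ingredients.
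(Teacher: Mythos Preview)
The paper does not prove this theorem: it is stated with an attribution \cite{Sako} and no proof environment follows. The result is quoted as input, and the surrounding discussion only uses its statement (together with the further refinements from \cite{Ch5} and \cite{ChS}) to dispose of the primitive case. So there is no ``paper's own proof'' to compare your proposal against.

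Your sketch follows the expected route --- the equivariant Sarkisov program combined with the Dolgachev--Iskovskikh classification of minimal $G$-surfaces --- and this is indeed the strategy carried out in the cited reference. You have correctly identified where the real work lies: the exhaustive analysis of $G$-invariant clusters and the verification, group by group, that no Sarkisov link leaves $\bP^2$ when $G$ is transitive and $G\not\cong\fA_4,\fS_4$. That case analysis is exactly what \cite{Sako} supplies and what the present paper does not attempt to reproduce. Your proposal is a reasonable outline of such an argument, but as you yourself acknowledge in the final paragraph, several of the verifications (orbit structures for the imprimitive families with $n\ge 3$, the rigidity of the low-degree del Pezzo surfaces carrying primitive actions) are only asserted, not carried out; filling them in is precisely the content of the reference.
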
 

Recall that $G$-birational rigidity in our context means that there are no $G$-equivariant birational maps $\bP^2\dashrightarrow X$, where 
$X$ is a $G$-Mori fiber space (i.e., conic bundle or a del Pezzo surface) not isomorphic to $\bP^2$. Birational super-ridigity means that every equivariantly birational model has the {\em same} action on $\bP^2$, up to automorphisms. 

In fact, \cite{Sako} identifies rigid and super-rigid actions. 
For example, there are {\em two} actions of $\fA_5$ on $\bP^2$, coming from the two  nonisomorphic 3-dimensional represesentations of $\fA_5$; they are not conjugated in $\PGL_3$ but are conjugated in the Cremona group $\mathrm{Cr}_2$ \cite[Remark 6.3.9]{ChS}. 
On the other hand, $\fA_6$ and $\mathrm{PSL}_2(\bF_7)$ also admit two actions on $\bP^2$, but these are not conjugated in $\mathrm{Cr}_2$ \cite[Theorems B.8 and A.20]{Ch5}. 

\subsection*{Transitive imprimitive actions}
In this case, $G$ contains an abelian subgroup $H$ of rank $2$. 
The classification of $G$-actions can be
approached via the Reichstein-Youssin invariant \cite{reichsteinyoussininvariant}: the equivariant birational type upon restriction to $H$ is governed by the 
determinant, up to $\pm 1$. 
This gives examples of nonbirational actions of $G$. 
The argument is similar to \cite[Theorem 7.2]{reichsteinyoussininvariant}.

\subsection*{Intransitive actions}
From the perspective of birational rigidity,
this class is more difficult to analyze: 
the existence of $G$-fixed points allows for different $G$-birational models. However, it is well-suited for applications of the Burnside group formalism. 

We can assume that 
$$
G=C_n\times G', \quad n\ge 2,
$$
where $G'\subset \GL_2$ is a lift of $\bar{G}'\subset \PGL_2$. 

\begin{itemize}
\item $G'=C_m$: Actions of cyclic groups on $\bP^1$ lift to $\GL_2$. Thus we have an action of $G:=C_n\times C_m$ on $\bP^2$. By \cite{reichsteinyoussininvariant}, such actions are birational if and only if  
the determinants differ by $\pm 1$. 
\item $\bar{G}'=\mathfrak D_m$, $\fA_4$, $\fS_4$, or $\fA_5$.
Let $n$ be such that $\varphi(n)\ge 3$. 
Then $G$ admits nonbirational 
actions on $\bP^2$. 
\end{itemize}

Indeed, let $\epsilon$ be a primitive character of $C_n$. 
Let $V$ be a faithful 2-dimensional representation of $G'$
lifting $\bar{G}'\subset \PGL_2$, and $V_{\epsilon}:=V\otimes \epsilon$.
This gives generically a free action of $G$ on 
$
\bP(1\oplus V_{\epsilon}).
$
To compute the class of this action in $\Burn_2(G)$, we may apply Theorem~\ref{theo:more-formula}. We may also turn to geometry and observe that there are two distinguished loci in $\bP(1\oplus V_{\epsilon})$: the fixed point, and the line $\bP(V)$. To obtain a standard model, we need to blow up the fixed point. On the blow-up, there will be two divisors with stabilizer $C_n$, the corresponding contribution to 
$$
[\bP(1\oplus V_{\epsilon})\actsfromright G]
$$ 
is given by 
$$
(C_n, \bar{G}'\actsfromleft k(\bP(V)), \epsilon) + (C_n, \bar{G}'\actsfromleft k(\bP(V)), -\epsilon).
$$
These symbols are incompressible, by Proposition~\ref{prop:inc}. 

Let $\epsilon'$ be another primitive character, and $V_{\epsilon'}:=V\otimes \epsilon'$ the corresponding twist. Applying Proposition~\ref{prop:incomp}, we obtain
$$
[\bP(1\oplus V_{\epsilon})\actsfromright G] \neq 
[\bP(1\oplus V_{\epsilon'})\actsfromright G] \in \Burn_2(G),
$$
provided $\epsilon\neq \pm \epsilon'$. 

This construction generalizes \cite[Example 5.3]{KT-struct}.

\section{Projective linear actions on $\bP^3$}
\label{sect:p3}

As in the case of $\bP^2$, primitive actions on $\bP^3$ are 
amenable to birational rigidity techniques.  
An analog of Theorem~\ref{thm:sako} in dimension 3 is:

\begin{theo}
\label{thm:cs}
\cite{CS3}
Let $G\subset \PGL_4$ be a finite subgroup. Then $\bP^3$ is $G$-birationally rigid if and only if $G$ is primitive and not isomorphic to $\fA_5$ or $\fS_5$. 
\end{theo}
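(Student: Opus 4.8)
The plan is to argue along the trichotomy used above for $\bP^2$—intransitive, transitive imprimitive, primitive—exhibiting a competing $G$-Mori fiber space whenever $G$ is not primitive or is $\fA_5$ or $\fS_5$, and running the $G$-equivariant Noether--Fano/Sarkisov machinery in the remaining cases.

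\emph{Intransitive $G$.} A faithful lift splits $G$-equivariantly as $V=V_1\oplus V_2$ with, say, $1\le\dim V_1\le 2$, so $\bP(V_1)\subset\bP(V)=\bP^3$ is $G$-invariant. The linear projection away from $\bP(V_1)$, resolved by the $G$-equivariant blow-up of $\bP(V_1)$, realizes $\mathrm{Bl}_{\bP(V_1)}\bP^3$ as a projective bundle over $\bP(V_2)$; this is a $G$-Mori fiber space of Picard number $>1$, hence $G$-birational but not $G$-isomorphic to $\bP^3$, so $\bP^3$ is not $G$-birationally rigid. \emph{Transitive imprimitive $G$.} Here an intransitive normal subgroup has its isotypic decomposition of $V$ permuted transitively by $G$ (else $V$ would be $G$-reducible), so $V=W_1\oplus\dots\oplus W_r$ with $\dim W_i$ constant and $r\mid 4$, i.e.\ $r\in\{2,4\}$. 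For $r=2$ the group preserves a pair of skew lines $L_1\sqcup L_2$, and the double projection $\bP(W_1\oplus W_2)\dashrightarrow\bP(W_1)\times\bP(W_2)\cong\bP^1\times\bP^1$, resolved by blowing up $L_1\sqcup L_2$, exhibits a $G$-conic bundle over $\bP^1\times\bP^1$. For $r=4$ the action is monomial, and blowing up the $G$-orbit of the four coordinate points and running a $G$-Sarkisov link (through flops of the coordinate lines) produces a $G$-Mori fiber space distinct from $\bP^3$. In each case $\bP^3$ is not $G$-birationally rigid; the precise competing models here still require separate care, but are routine by comparison with what follows.

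\emph{Primitive $G$.} This is the substantive part. Using Blichfeldt's classification of primitive finite subgroups of $\PGL_4$, I would set aside $\fA_5$ and $\fS_5$ and treat every other primitive $G$ by the $G$-equivariant Noether--Fano theorem (Corti's form and its $G$-version): a $G$-birational map $\bP^3\dashrightarrow X$ to a $G$-Mori fiber space that is not an isomorphism forces the mobile system $\cM\subset|nH|$ pulled back from $X$ to make the pair $(\bP^3,\frac1n\cM)$ non-canonical, hence yields a $G$-invariant maximal center $Z\subset\bP^3$. Because $\bP^3$ has Fano index $4$, and using the multiplicity estimates obtained by intersecting two general members of $\cM$, and a general surface of $\cM$ with a general member, $Z$ is forced onto a short list: a small $G$-orbit of points, a line, a conic, a twisted cubic or low-degree elliptic curve, or a plane or quadric surface initiating a link. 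Primitivity already kills $G$-fixed points and $G$-invariant lines (these are $1$- and $2$-dimensional invariant subspaces of $V$), and the remaining possibilities are eliminated group by group from the orbit and invariant-subvariety data of each entry of Blichfeldt's list; this yields $G$-birational (indeed super-)rigidity for primitive $G\ne\fA_5,\fS_5$. Finally, for $G\cong\fA_5$ and $G\cong\fS_5$ the special geometry of these actions on $\bP^3$—invariant quadrics, the Clebsch cubic, twisted cubics, the Bring curve—supplies the maximal centers that the other primitive groups lack, and blowing them up initiates explicit $G$-Sarkisov links to other $G$-Mori fiber spaces (for instance from $\bP^3$ to the quintic del Pezzo threefold $V_5$, on which $\fA_5\cong\mathrm{PSL}_2(\bF_5)\subset\Aut(V_5)=\PGL_2$ acts), so $\bP^3$ is not $\fA_5$- or $\fS_5$-birationally rigid.

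The main obstacle is the primitive rigidity step: one must assemble, for the whole Blichfeldt list of primitive finite subgroups of $\PGL_4$, the census of small point-orbits and of low-degree invariant curves and surfaces, and then carry the threefold Noether--Fano/maximal-center exclusions through case by case. By contrast, the non-rigidity assertions amount to producing the explicit competing Mori fiber spaces indicated above.
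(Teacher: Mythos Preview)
The paper does not prove this theorem: it is stated with a citation to \cite{CS3} (Cheltsov--Shramov) and used as input for the subsequent discussion, so there is no proof in the paper to compare your sketch against.

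That said, your outline is broadly faithful to the strategy of the cited work. The reduction in the intransitive and imprimitive cases to explicit competing $G$-Mori fiber spaces (projective bundles from invariant linear subspaces, the $\bP^1$-bundle over $\bP^1\times\bP^1$ from a pair of skew lines, monomial links in the $r=4$ case) is the right idea. For the primitive direction you correctly identify the Noether--Fano/maximal-center method applied over Blichfeldt's list as the substantive work, and the explicit links for $\fA_5$ and $\fS_5$ (e.g., to $V_5$) are indeed what break rigidity there. Two small points: in the $r=2$ case the blown-up variety is a $\bP^1$-bundle over $\bP^1\times\bP^1$, not merely a conic bundle; and your remark that the imprimitive constructions ``are routine by comparison'' understates the care needed to verify that the resulting structures really are $G$-Mori fiber spaces (one must check the $G$-equivariant relative Picard rank and extremality), though this is indeed much lighter than the primitive exclusion analysis.
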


The classification goes further, by identifying $G$-super-rigid actions, e.g., 
two embeddings of $G=\mathrm{PSL}_2(\mathbb F_7)$ into $\PGL_4$ are conjugated in $\mathrm{Cr}_3$ if and only if they are conjugated in $\PGL_4$ \cite[Corollary 1.11]{CheShr}; this yields 
examples of nonbirational actions by 
considering 
$\bP(1\oplus V_3)$, where $V_3$ is a 3-dimensional representation of $\mathrm{PSL}_2(\mathbb F_7)$, and 
$\bP(V_4)$, where $V_4$ is 
a 4-dimensional representation of $\mathrm{SL}_2(\mathbb F_7)$. 

Much harder to handle, from the perspective of rigidity, are intransitive actions, in particular, those with a $G$-fixed point.

\begin{theo}
\label{thm:p3}
Consider 
$
G=C_n\times G',  
$
where $G'\subset \GL_3$ is a lift of 
$\bar{G}'\subset \PGL_3$.  
Let $\bar{G}'$ be one of the following groups
$$
\fS_4, \fA_5, \mathrm{PSL}_2(\mathbb F_7), \fA_6\subset \PGL_3. 
$$
Assume that $\varphi(n)\ge 3$. 
Then $G$ admits nonbirational actions on
$\bP^3$. 
\end{theo}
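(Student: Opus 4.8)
The plan is to carry over to dimension three the construction used for $\bP^2$ in Section~\ref{sect:proj2}, and to read off the difference between the two actions from the incompressible summand $\Burn_3^{\mathrm{inc}}(G)$ of Proposition~\ref{prop:incomp}.

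First I would fix a primitive character $\epsilon$ of $C_n$, choose a faithful $3$-dimensional representation $V$ of $G'$ lifting $\bar G'\subset\PGL_3$, and set $V_\epsilon:=V\otimes\epsilon$. Then $G=C_n\times G'$ acts generically freely on $\bP(1\oplus V_\epsilon)\cong\bP^3$, and I would pass to a standard model, e.g.\ the De Concini--Procesi model of Section~\ref{sect:projective}, which is in standard form by Proposition~\ref{prop:projectivedp}. A short eigenspace analysis — a divisor fixed pointwise by $\Gamma$ forces $\Gamma$ to act by scalars on a hyperplane of $1\oplus V_\epsilon$, and the only $\Gamma=C_n$-eigenhyperplanes are $V_\epsilon$ and the complements of the invariant line — shows that the only $G$-invariant prime divisors of the model with generic stabilizer $C_n$ are the exceptional divisor over the $G$-fixed point $[1:0:0:0]$ and the strict transform of $\bP(V_\epsilon)$; both are $G$-equivariantly birational to $\bP(V)=\bP^2$ carrying the action of $Z_G(C_n)/C_n$, which factors through $\bar G'$, and all remaining divisors have generic stabilizer different from $C_n$. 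Computing generic normal bundle characters as in Section~\ref{sect:models} — the tautological bundle $\cO_{\bP(V_\epsilon)}(-1)$ gives the character $\epsilon$ along the exceptional divisor, its dual the character $-\epsilon$ along the strict transform of $\bP(V_\epsilon)$ — the contribution of these two divisors to $[\bP^3\actsfromright G]$ is
\[ (C_n,\bar G'\actsfromleft k(\bP^2),(\epsilon))+(C_n,\bar G'\actsfromleft k(\bP^2),(-\epsilon)). \]
(For $\bar G'=\fA_6$, which has no faithful $3$-dimensional linear representation, one takes $G'=3.\fA_6$ and replaces $C_n$ throughout by the cyclic group $C_n\times C_3$ — imposing $3\nmid n$, and handling the remaining case by passing to the relevant central quotient; the bookkeeping of characters changes but the argument does not.)

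Granting that these two divisor symbols are incompressible, the proof concludes as follows. By Proposition~\ref{prop:incomp}, $\Burn_3^{\mathrm{inc}}(G)$ is a direct summand freely generated by incompressible divisor symbols modulo Conjugation \textbf{(C)}; since $C_n$ is central in $G$, conjugation acts trivially on $C_n^\vee$, so different characters give different generators. By the analysis above, for every primitive character $\epsilon'$ the only $C_n$-divisor symbols contributing to $[\bP(1\oplus V_{\epsilon'})\actsfromright G]$ are $(C_n,\bar G'\actsfromleft k(\bP^2),(\pm\epsilon'))$, each with coefficient $1$. When $\epsilon'\ne\pm\epsilon$ these four generators are pairwise distinct, so the images of $[\bP(1\oplus V_\epsilon)\actsfromright G]$ and $[\bP(1\oplus V_{\epsilon'})\actsfromright G]$ in $\Burn_3^{\mathrm{inc}}(G)$ differ, whence
\[ [\bP(1\oplus V_\epsilon)\actsfromright G]\ne[\bP(1\oplus V_{\epsilon'})\actsfromright G]\in\Burn_3(G). \]
The hypothesis $\varphi(n)\ge 3$ is exactly what makes it possible to pick two primitive characters of $C_n$ that are not negatives of one another.

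The technical heart — and the step I expect to be the main obstacle — is the incompressibility of $(C_n,\bar G'\actsfromleft k(\bP^2),(\epsilon))$, a three-dimensional analogue of Proposition~\ref{prop:inc}. Were it to arise as the term $\Theta_2$ of relation \textbf{(B2)} from a symbol $(\widetilde H,\widetilde Z\actsfromleft\widetilde K,\beta)$, centrality of $C_n$ would force $\widetilde H\supseteq C_n$ with $Z_G(\widetilde H)/C_n\cong\bar G'$, hence $\widetilde H$ central, $\widetilde H=C_n$, and $b_1-b_2=0$; the shape of $\Theta_2$ then forces $k(\bP^2)\cong\widetilde K(t)$ with $\bar G'$ acting trivially on $t$, so $\widetilde K=k(C)$ for a $\bar G'$-curve $C$, necessarily rational. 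Thus compressibility would mean precisely that $\bP^2$ is $\bar G'$-equivariantly birational to $\bP^1\times\bP^1$ with $\bar G'$ acting through $\PGL_2$ on one factor and trivially on the other. For $\bar G'=\fA_5$ this is ruled out by the $\fA_5$-birational rigidity of $\bP^2$ (Theorem~\ref{thm:sako}); for $\bar G'\in\{\mathrm{PSL}_2(\bF_7),\fA_6\}$ it is ruled out because these groups do not embed into $\PGL_2$; and for $\bar G'=\fS_4$ — the one case outside the scope of Theorem~\ref{thm:sako} — I would rule it out directly, either by distinguishing the standard-representation action on $\bP^2$ from the product action by a finer equivariant birational invariant, or by producing instead an incompressible divisor symbol of $[\bP^3\actsfromright G]$ attached to a non-central abelian subgroup, to which the planar analysis of Section~\ref{sect:proj2} applies. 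Carrying out this last case is where the real work lies.
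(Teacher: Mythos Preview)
Your overall strategy coincides with the paper's: produce the two divisor symbols $(C_n,\bar G'\actsfromleft k(\bP(V)),(\pm\epsilon))$, argue their incompressibility, and separate the classes in the free summand $\Burn_3^{\mathrm{inc}}(G)$ of Proposition~\ref{prop:incomp}. The identification of the two $C_n$-divisors, the extraction of their symbols, your reduction of compressibility to a $\bar G'$-equivariant birationality $\bP^2\sim\bP^1\times\bP^1$ (trivial on the second factor), and the final comparison are all correct and match the paper.

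The one real gap is the incompressibility argument for $\bar G'=\fS_4$, which you explicitly leave open. The paper closes it---and handles $\fA_5$ at the same time, without invoking Theorem~\ref{thm:sako}---by the most elementary classical invariant. Restrict to the Klein four-group $\fK_4\subset\bar G'$: on $\bP^1\times\bP^1$ with $\bar G'$ acting through $\PGL_2$ on the first factor and trivially on the second, $\fK_4$ has no fixed point, since three pairwise commuting involutions of $\bP^1$ cannot share one; on $\bP^2$ the standard representation of $\bar G'$ restricts to $\fK_4$ as the sum of the three nontrivial characters, so $\fK_4$ fixes three points. Existence of fixed points upon restriction to an abelian subgroup is a $\bar G'$-equivariant birational invariant (the first classical tool listed in the Introduction), so the two surfaces are not $\bar G'$-birational and the symbol is incompressible. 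Your suggested workarounds for $\fS_4$---a finer invariant, or a different divisor symbol attached to a non-central abelian subgroup---are therefore unnecessary; the missing idea is just this fixed-point count.
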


\begin{proof}
Let $V$ be a faithful 3-dimensional representation of $G'$. 
We consider generically free actions on $\bP(1 \oplus V_{\epsilon})$, 
where $V_{\epsilon}:=V\otimes \epsilon$, and $\epsilon$ 
is a nontrivial character of $C_n$. 
Applying Theorem~\ref{theo:more-formula}, and 
the formalism of Section~\ref{sect:proper} (or  \cite[Section 5]{KT-struct}), we 
extract symbols with $C_n$-stabilizers which appear in the computation of the class
$$
[\bP(1\oplus V_{\epsilon}) \actsfromright G] \in \Burn_3(G),
$$
namely, 
\begin{equation}
\label{eqn:ssymb}
(C_n, \bar{G}'\actsfromleft k(\bP(V)), \epsilon) + (C_n, \bar{G}'\actsfromleft k(\bP(V)), -\epsilon).
\end{equation}
The main point is that 
these symbols are incompressible (see Section \ref{sect:proper}).
Indeed, the $\mathrm{PSL}_2(\bF_7)$ and $\fA_6$-actions on $\bP^2$ cannot be obtained by blowing up an isolated point on a threefold (it has an abelian stabilizer), or a line $\bP^1$, since these groups do not act (generically freely) on $\bP^1$. 
The $\fA_5$, respectively $\fS_4$-action, on 
$\bP^1\times \bP^1$, with trivial action 
on the second factor, is not equivariantly birational to the linear action of the corresponding group on $\bP^2$. Indeed,  
the restriction of the action to a Klein subgroup $\mathfrak K_4\subset G$, for $G=\fS_4$ or $\fA_5$, has no fixed points on $\bP^1\times \bP^1$, but does have fixed points on $\bP^2$.

By Proposition~\ref{prop:incomp}, the symbols
$$
(C_n, \bar{G}'\actsfromleft k(\bP(V)), \epsilon)
$$
are linearly independent in $\Burn_3^{\mathrm{inc}}(G)$, a direct summand of $\Burn_3(G)$.
Therefore, 
$$
[\bP(1 \oplus V_{\epsilon}) \actsfromright G] \neq 
[\bP(1 \oplus V_{\epsilon'}) \actsfromright G] \in \Burn_3(G),
$$
provided 
$$
\epsilon\neq \pm \epsilon',
$$
in which case the actions are not equivariantly birational. 
\end{proof}

\bibliographystyle{plain}
\bibliography{reptheory}

\end{document}